\newtheorem{proposition}{Proposition}[section]
\newtheorem{lemma}[proposition]{Lemma}
\newtheorem{corollary}[proposition]{Corollary}
\newtheorem{theorem}[proposition]{Theorem}
\newtheorem*{mainthm}{Main Theorem}
\newtheorem{definition}[proposition]{Definition}
\theoremstyle{definition}
\newtheorem{remark}[proposition]{Remark}
\newtheorem{conjecture}[proposition]{Conjecture}
\numberwithin{equation}{section}
\newcommand{\eqr}[1]{\mbox{(\ref{eq:#1})}}
\DeclareMathOperator{\Cl}{Cl}
\DeclareMathOperator{\mdeg}{mdeg}
\begin{document}
\title[Diameter bounds for untwisted classical groups]{Growth estimates and diameter bounds for untwisted classical groups}
\date{\today}

\author{Jitendra Bajpai}
\address{Department of Mathematics, Kiel University, 24118 Kiel, Germany}
\email{jitendra@math.uni-kiel.de}

\author{Daniele Dona}
\address{Alfr\'ed R\'enyi Institute of Mathematics, 1053 Budapest, Hungary}
\email{dona@renyi.hu}

\author{Harald Andr\'es Helfgott}
\address{Mathematisches Institut, Georg-August-Universit\"at G\"ottingen, 37073 G\"ottingen, Germany; IMJ-PRG\\ Université Paris-Cité\\Boîte Courrier 7012\\8 place Aurélie Nemours\\75205 Paris Cedex 13\\France}
\email{harald.helfgott@gmail.com}

\subjclass[2020]{Primary: 20F69, 20G40, 05C25; Secondary: 14A10, 05C12}  
\keywords{Growth in groups, algebraic groups, escape from subvariety,  Babai's conjecture}

\begin{abstract}
Babai's conjecture states that, for any finite simple non-abelian group $G$, the diameter of $G$ is bounded by $(\log|G|)^{C}$ for some absolute constant $C$. We prove that, for any untwisted classical group $G$ of rank $r$ defined over a field $\mathbb{F}_{q}$ 
with $q$ not too small with respect to $r$,
\begin{equation*}
\mathrm{diam}(G(\mathbb{F}_{q}))\leq(\log|G(\mathbb{F}_{q})|)^{408r^{4}}.
\end{equation*}  
This bound improves on results by Breuillard, Green, and Tao~\cite{BGT11}, Pyber and Szab\'o~\cite{PS16}, and, for $q$ large enough, also by Halasi, Mar\'oti, Pyber, and Qiao~\cite{HMPQ19}. Our approach is in several ways closer to that of preexistent work by Helfgott~\cite{Hel11}, in that we
give dimensional estimates (that is, bounds of the form $|A\cap V(\mathbb{F}_{q})|\ll|A^{C}|^{\dim(V)/\dim(G)}$, where $A$ is any generating set) for varieties $V$ of specific types, and work in the Lie algebra whenever possible. One of our main tools is a new, more efficient form of escape from subvarieties.
\end{abstract} 
\maketitle
\tableofcontents  


\section{Introduction}\label{se:intro}
\subsection{Background and main result}
Let $G$ be a finite group and $A$ be a subset of $G$. Let $\Gamma(G,A)$ denote the {\em Cayley graph} of $G$ with respect to $A$, that is, the graph $\Gamma=(V,E)$ whose vertices are the elements of $G$ and whose set of edges $E$ is $\{(g,ag): g\in G, a\in A\}$. It is easy to see that $\Gamma(G,A)$ is connected if and only if $G=\langle A \rangle$, i.e.\ $A$ generates $G$. 

Recall that the {\em diameter} of a connected graph $\Gamma$ equals $\max_{x,y\in V} \delta(x,y)$, where $\delta(x,y)$ is the length of the shortest path between $x$ and $y$ (and the {\em length} of a path is the number of edges in it).
We denote by $\mathrm{diam}(G,A)$ the diameter of $\Gamma(G,A)$. For $A$ symmetric
(i.e., such that $A^{-1} = \{x^{-1}: x\in A\}$ equals $A$) with $e\in A$, the diameter
of $\Gamma(G,A)$ equals the smallest integer $m$ such that $A^{m}=G$ (where $A^m=\{x_1 x_2 \dotsb x_m: x_1,\dotsc,x_m\in A\}$). Note that, for $A$ symmetric, $\Gamma(G,A)$ is symmetric.

If $G$ is finite, we can define its \textit{diameter} $\mathrm{diam}(G)$ as the maximum of $\mathrm{diam}(G,A)$, where $A$ runs through all symmetric sets of generators of $G$ with $e\in A$. Considering only these sets does not really entail a loss of generality: $\mathrm{diam}(G)$ is never much smaller than the \textit{directed diameter} $\mathrm{diam}^{+}(G)$ defined considering all generating sets $A$ (see~\cite[Cor.~2.3]{Bab06}).

The following is a well-known conjecture by Babai; it first appeared in~\cite{BS88}.
Here and from now on, $|S|$ denotes the number of elements in a set $S$.

\begin{conjecture}\label{conj:Babai}(\cite[Conj.~1.7]{BS92})
Let $G$ be a non-abelian finite simple group. Then
\begin{equation*}
\mathrm{diam}(G)\leq(\log |G|)^{C},
\end{equation*}
where $C$ is an absolute constant. 
\end{conjecture}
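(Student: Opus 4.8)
The statement to be established is Babai's conjecture itself, so what follows is the strategy one would follow, with the status of each link indicated. The plan begins with the classification of finite simple groups: a non-abelian finite simple group $G$ is either an alternating group $A_{n}$ with $n\geq 5$, a (possibly twisted) finite group of Lie type, or one of the $26$ sporadic groups. The sporadic groups form a finite list of bounded order, so their diameters are $O(1)$ and the bound $(\log|G|)^{C}$ is trivial for them; it remains to treat alternating groups and groups of Lie type.

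For groups of Lie type the engine is a \emph{product theorem}: for every generating set $A$ of $G$, either $A^{3}=G$ or $|A^{3}|\geq|A|^{1+\delta}$ for some $\delta>0$. Given this, a routine iteration — tripling at most $O(\delta^{-1}\log\log|G|)$ times from any nontrivial $A$ — yields $\mathrm{diam}(G)\leq(\log|G|)^{O(\delta^{-1})}$. The product theorem is obtained from \emph{dimensional estimates} together with \emph{escape from subvarieties}: one proves $|A\cap V(\F_{q})|\ll|A^{C}|^{\dim(V)/\dim(G)}$ for the relevant proper subvarieties $V\subsetneq G$ (centralizers, maximal tori, unipotent subgroups, and suitable products of these), and then uses the fact that a non-growing generating set would have to be trapped inside such a $V$, a contradiction. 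For Chevalley groups of \emph{bounded} rank this program is complete with $\delta$ absolute, and Babai's conjecture is a theorem there (Helfgott for $\mathrm{SL}_{2}$ and $\mathrm{SL}_{3}$; Breuillard--Green--Tao and Pyber--Szab\'o~\cite{BGT11,PS16} in general, which in particular settles the exceptional types, of rank at most $8$). A product theorem is in fact available for \emph{all} classical groups~\cite{PS16}, but there $\delta$ decays so fast in the rank $r$ that the resulting diameter exponent is very far from absolute; the task for unbounded rank is to run the dimensional estimates with explicit, near-polynomial control of how the implied constants and the number of escape steps grow with $r$ and $q$. This is exactly what the present paper does, obtaining $\delta\gg 1/(r^{O(1)}\log r)$ when $q$ is not too small relative to $r$, hence the stated bound — which for $q$ large also improves on~\cite{HMPQ19}.

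For $G=A_{n}$ one has $\log|G|\asymp n\log n$, so the conjecture demands $\mathrm{diam}(A_{n})\leq n^{O(1)}$. Here the method is combinatorial rather than algebro-geometric: from a generating set $A$ one tracks how the supports of the permutations in $A^{k}$ spread over $\{1,\dots,n\}$, exploiting transitivity and primitivity of $\langle A\rangle$ on blocks and a quantitative Babai--Seress-type analysis, in order to produce within polynomially many steps a permutation of small support (for instance a $3$-cycle), after which classical bounds on generating $A_{n}$ by short cycles finish the argument. The state of the art along these lines is the quasipolynomial estimate $\mathrm{diam}(A_{n})\leq\exp\!\big(O((\log n)^{4}\log\log n)\big)$ of Helfgott and Seress.

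The hard part, in both halves of the plan, is uniformity in the hidden parameter — the rank $r$ for Lie type, the degree $n$ for alternating groups. On the Lie-type side, each escape step through a codimension-one subvariety costs a power of $\dim(G)$, and the number of steps required grows with $r$, so the exponent one gets is not absolute; removing this dependence — ideally bounding $\delta$ below independently of $r$, and also handling small $q$ — is the crux and is open. On the alternating side, closing the gap from quasipolynomial to genuinely polynomial in $\log|G|$ requires a substantially sharper control of support-spreading, which is likewise open. A complete proof of Conjecture~\ref{conj:Babai} thus awaits progress on at least one of these two fronts; the present paper advances the Lie-type side as far as current dimensional-estimate technology allows.
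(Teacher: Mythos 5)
The statement you were asked to prove is Conjecture~\ref{conj:Babai}, which is precisely that — a conjecture — and the paper does not prove it; it only advances partial progress for classical Chevalley groups under a constraint on $q$ relative to $r$. You correctly recognize this, refrain from claiming a proof, and instead give an accurate survey of the known approaches and open gaps: the CFSG reduction to alternating and Lie-type cases, the product-theorem route via dimensional estimates and escape for Lie type, the support-spreading combinatorics for $A_{n}$, and the two genuine obstructions (rank-uniformity in $\delta$, and the gap between quasipolynomial and polynomial for $A_{n}$). Your description of what the present paper contributes — explicit, near-polynomial-in-$r$ control of the constants when $q$ is not too small, improving on~\cite{BGT11},~\cite{PS16}, and, for $q$ large, on~\cite{HMPQ19} — matches the paper's own account. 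There is nothing in the paper to compare against as a proof, and your answer appropriately declines to pretend otherwise.
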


Partial results are known for certain classes of finite simple groups. By the Classification of Finite Simple Groups (CFSG), all but finitely many such groups are either groups of Lie type or $\mathrm{Alt}(n)$ for some $n$. For $\mathrm{Alt}(n)$, the first non-trivial diameter bound of $e^{O(\sqrt{n\log n})}$ appeared in~\cite{BS88} together with the conjecture itself. The current record is of the form $e^{O((\log n)^{4}\log\log n)}$, or $e^{(\log\log|G|)^{O(1)}}$~\cite{HS14}.

The first result for groups of Lie type, by the third author~\cite{Hel08}, settled Conjecture~\ref{conj:Babai} for $\mathrm{PSL}_{2}(\mathbb{F}_{p})$, $p$ prime. This result was followed by an analogous one for $\mathrm{PSL}_{3}(\mathbb{F}_{p})$ \cite{Hel11}: several ideas carry over to general groups of Lie type, and we use them in the present article as well. Today, bounds are known for all finite simple groups of Lie type. There are actually two kinds of strongest results to date, depending on the rank and the size of the base field. 

The first kind, due independently to Breuillard, Green, and Tao~\cite{BGT11} and Pyber and Szab\'o~\cite{PS16}, bounds the diameter of all finite simple groups of Lie type by $(\log|G|)^{C(r)}$, for some constant $C(r)$ depending only on the rank $r$. 
Some of the ideas in these papers -- and part of their overall strategy -- echo those in~\cite{Hel08} and \cite{Hel11}:
compare~\cite[Cor.~2.4]{BGT11} and~\cite[Thm.~68]{PS16} with~\cite[Key Prop.]{Hel08},
or the role of {\em dimensional estimates}, {\em escape} or {\em pivoting} (to be discussed soon) in all of these papers. The exponent $C(r)$ in~\cite{BGT11} is in principle computable but would be hard to make explicit, due to the use of ultrafilters. In~\cite{PS16}, $C(r)$ is given by a tower of exponentials of height depending on $r$: see the explicit work in \cite[App.~A]{PS16}; the tower emerges in~\cite[Lemma~79]{PS16}, ultimately affecting the bound in~\cite[Fact~17(e)]{PS16}.

The second kind of result for groups of Lie type, due to Halasi, Mar\'oti, Pyber, and Qiao~\cite{HMPQ19}, bounds the diameter by $q^{O(r (\log r)^{2})}$ for $r>1$, where $q$ is the size of the base field and $r$ is the rank of the group. The techniques used are inspired by proofs of diameter bounds in the permutation group setting: the paper builds on the work of Biswas and Yang~\cite{BY17}, who in turn adapted concepts from~\cite{BS92} to the Lie type case. This bound is tighter than that given by the first kind of results when $q$ is not too large with respect to $r$, where
the exact meaning of ``not too large'' depends on how fast the function $C(r)$ grows.

Our main result is a bound on the diameter of $G(\mathbb{F}_{q})$, with a mild technical assumption on the base field. Throughout the article, $q$ is any prime power.

\begin{mainthm}
Let $G<\mathrm{GL}_{N}$ be an untwisted classical group of type $\mathrm{SL}_{n}$, $\mathrm{SO}_{2n}^{+}$ ($n\geq 4$), $\mathrm{SO}_{2n+1}$ ($n\geq 3$), or $\mathrm{Sp}_{2n}$. Let $r$ be the rank of $G$, and assume that $\mathrm{char}(\mathbb{F}_{q})>N$. Then
\begin{equation*}
\mathrm{diam}(G(\mathbb{F}_{q}))\leq(2r)^{c_{1}r^{4}}(\log|G(\mathbb{F}_{q})|)^{c_{2}r^{2}}\leq(\log|G(\mathbb{F}_{q})|)^{c_{3}r^{4}},
\end{equation*}
where $c_{1},c_{2},c_{3}$ are absolute constants. For $q\geq e^{8r\log(2r)}$, we can take $(c_{1},c_{2},c_{3})=(930,57,408)$.
\end{mainthm}

The constants improve for large rank (see~\S\ref{se:asymp}): for $r\rightarrow\infty$ and $q\geq e^{(2+o(1))r\log r}$,
\begin{align*}
& \mathrm{diam}(G(\mathbb{F}_{q}))\leq\min\left\{r^{c_{1}r^{4}}(\log|G(\mathbb{F}_{q})|)^{c_{2}r^{2}} \, , \, (\log|G(\mathbb{F}_{q})|)^{c_{3}r^{4}}\right\}, \\
& (c_{1},c_{2},c_{3})=\left(\frac{576}{5}+o(1),\frac{48\log 4}{5}+o(1),\frac{96}{5}+o(1)\right).
\end{align*}

\subsection{Overview of the article}
Our strategy is akin to that of~\cite{BGT11} and~\cite{PS16}, but in some respect, it is closer to the older methods of~\cite{Hel11}.

We start with accessory results about the production of generic elements ({\em escape from subvarieties}). Such ideas appear already in~\cite{EMO05}, although they are probably older than that. Non-explicit predecessors of our results include~\cite[Prop.~4.1]{Hel11}, \cite[Lemma~3.11]{BGT11}, and \cite[Lemma~48]{PS16}. In Section~\ref{se:escape}, we make the argument both quantitatively and qualitatively more powerful: we pass through a $G$-invariance result (Proposition~\ref{pr:strong-escape}) that not only yields a classical escape (Corollary~\ref{co:escape}), but also a recipe that we shall use later to get rid of unwanted special points ({\em Ariadne's cookbook}, Corollary~\ref{co:ariadne}).

We pass then to our main ingredient, a {\em dimensional estimate}. The concept originates from the work of Larsen and Pink~\cite{LP11}, after whom it is sometimes named in the literature: given a variety $V\subseteq G$ and a field $K$, a dimensional estimate is a bound of the form $|A\cap V(K)|\leq C_{1}|A^{C_{2}}|^{\frac{\dim(V)}{\dim(G)}}$ for every set $A$ generating $G(K)$. Statements of a form similar to this one appear in~\cite[Thm.~4.1]{BGT11} and~\cite[Thm.~40]{PS16}, which are designed to apply to any variety $V$: the key feature in obtaining a diameter bound is the exponent $\frac{\dim(V)}{\dim(G)}$, while the sizes of $C_{1}$ and $C_{2}$ determine how good the bound is.

Rather than dealing with general $V$, we focus only on the two classes of varieties needed for the estimation of the diameter: conjugacy classes of regular semisimple elements in Section~\ref{se:de-cl(g)}, and non-maximal tori in Section~\ref{se:de-torus}. This represents somewhat a return to~\cite{Hel11}, both in our restriction to these varieties and in the fact that we do much of our work over the Lie algebra, as part of a more general decision to linearize whenever possible. Dimensional estimates for general subvarieties $V$ of connected almost simple linear algebraic groups have been given by the authors in~\cite{BDH24}, together with diameter bounds for classical groups that are based on these estimates.

All in all, by examining group actions and using Ariadne's cookbook, we manage to obtain $C_{1}, C_{2}$ of the form $e^{O(r^{O(1)})}$ for the dimensional estimates in both cases, where $r$ is the rank of $G$. For comparison, even when supplemented by explicit new routines, the inductive procedure of~\cite{BGT11} leads to a doubly exponential dependence of $C_{1}$ on $r$: see~\cite{BDH24}.

From here, in Section~\ref{se:diam} we follow a well-known route to diameter bounds: a {\em pivoting} argument. The idea comes from the study of the sum-product phenomenon over finite fields (dating back to~\cite{BKT04}), and it was introduced to the study of diameters already in~\cite{Hel08}. It appears
in the form we need in~\cite[\S 5]{BGT11} and~\cite[\S 8]{PS16}. Thanks to the pivoting argument and the bounds on $C_{1},C_{2}$, we first describe the growth of a symmetric set $A$ of generators of $G(\mathbb{F}_{q})$ in Theorem~\ref{thm:growth}, and then prove the Main Theorem.

\subsection{Other finite groups of Lie type}

Since~\cite{BGT11} and \cite{PS16} apply to all finite simple groups of Lie type, a natural question is whether our strategy applies to other families of algebraic groups. Babai's conjecture holds for groups of bounded rank, so the only open cases are the twisted classical groups $\mathrm{SU}_{n}$ and $\mathrm{SO}_{2n}^{-}$.

There seems to be no substantial obstacle. Dimensional estimates for these groups exist, even if they are doubly exponential in $r$: see~\cite[Thm.~6.2]{BDH24}. They rely on expressing $\mathrm{SU}_{n}(q)$ and $\mathrm{SO}_{2n}^{-}(q)$ as sets of $\mathbb{F}_{q}$-points of linear algebraic groups defined in a larger ambient space, and we should be able to follow a similar procedure in our situation; we just need to add more casework in Sections~\ref{se:de-cl(g)-prel} and~\ref{se:de-torus-linind}. The tools of Section~\ref{se:diam} regarding Weyl groups and representations can be used for all finite simple groups of Lie type, so the proof leading to the diameter bound should transfer with only some minor additional technical work.


\section{Basic tools and preliminaries}\label{se:prelim}

This section familiarizes the reader with the notations and concepts to be used throughout the article. Here we collect basic properties about varieties, degrees, classical groups, and $\mathbb{F}_{q}$-points.

We start by recalling two elementary and general facts about growth.

\begin{proposition}\label{pr:ruzsa}
Let $G$ be a finite group, and let $A=A^{-1}$ be a set inside $G$. Then
\begin{equation*}
\frac{|A^{k}|}{|A|}\leq\left(\frac{|A^{3}|}{|A|}\right)^{k-2}
\end{equation*}
for any $k\geq 3$.
\end{proposition}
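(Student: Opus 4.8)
The plan is to reduce everything to the standard triangle/covering inequalities for sumsets (in multiplicative notation here, since $G$ is a group). The key classical input is the Ruzsa triangle inequality together with the Plünnecke–Ruzsa covering lemma: if $|A^{3}|\leq K|A|$ with $A=A^{-1}$, then one can control all higher products $|A^{k}|$ by $K^{k-2}|A|$. So I would first isolate the statement in the clean form one actually uses: for a symmetric set $A$ with $e\in A$ (or at least $A=A^{-1}$), writing $K=|A^{3}|/|A|$, one wants $|A^{k}|\leq K^{k-2}|A|$ for all $k\geq 3$.

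The main step is an induction on $k$. The base case $k=3$ is the definition of $K$. For the inductive step, suppose $|A^{k}|\leq K^{k-2}|A|$; I want to pass to $k+1$. The cleanest route is a covering argument: I would show that $A^{k+1}$ is covered by few left-translates of $A^{3}$ (or of $A$), with the number of translates controlled by $K$. Concretely, using $|A\cdot A^{2}|=|A^{3}|\leq K|A|$ and the symmetry $A=A^{-1}$, the Ruzsa covering lemma gives a set $X\subseteq A^{2}$ with $|X|\leq K$ such that $A^{2}\subseteq XA A^{-1}=XA^{2}$, hence inductively $A^{m}\subseteq X^{m-2}A^{2}$-type containments; more directly, iterating $A^{k+1}=A\cdot A^{k}$ and feeding in the triangle inequality $|A\cdot A^{k}|\leq \tfrac{|A\cdot A^{2}|\,|A^{2}\cdot A^{k-1}|}{|A^{2}|}$-style bounds lets the factor $K$ accumulate exactly once per step beyond the third. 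Either way, each multiplication by $A$ past the third costs a factor of at most $K=|A^{3}|/|A|$, which yields $|A^{k}|\leq K^{k-2}|A|$, i.e.\ the claimed inequality after dividing by $|A|$.

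I would present this via the Ruzsa triangle inequality $|U|\,|V W^{-1}|\leq |U V^{-1}|\,|U W^{-1}|$ applied with well-chosen $U,V,W$ among powers of $A$, combined with $A=A^{-1}$ to erase all the inverses, so that every sumset appearing is literally a power $A^{j}$. The monotonicity remark already in the text ($A^{s}\subseteq A^{s+1}$, valid since $e\in A$) is convenient for absorbing low powers into higher ones. The one genuine point of care — the part I expect to be the main obstacle, though it is standard — is bookkeeping the exponent so that exactly $k-2$ factors of $K$ appear and not, say, $k-1$: this is where one must use the $3$-fold product $|A^{3}|$ rather than $|A^{2}|$ as the pivot, exploiting that $A^{3}=A\cdot A^{2}$ with $A$ symmetric to set up the triangle inequality with a genuine ``overlap'' term $A^{2}$ in the denominator. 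Once the exponent is pinned down, dividing through by $|A|$ gives the stated form $|A^{k}|/|A|\leq(|A^{3}|/|A|)^{k-2}$ for all $k\geq 3$.
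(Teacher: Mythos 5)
Your overall route --- induction on $k$, with the non-commutative Ruzsa triangle inequality supplying one factor of $K=|A^{3}|/|A|$ for each multiplication beyond the third --- is exactly the standard proof; the paper itself gives no argument and simply cites Ruzsa via \cite[(3.3)]{Helfgott-BAMS2015}, so your plan matches the intended source. However, the one point you flag as delicate is the one you get wrong: the pivot set $U$ in the triangle inequality must be $A$, not $A^{2}$, so the denominator is $|A|$, not $|A^{2}|$. The correct instance of $|U|\,|VW|\le |VU|\,|U^{-1}W|$ is $U=A$, $V=A^{k-1}$, $W=A^{2}$, which with $A^{-1}=A$ gives
\begin{equation*}
|A|\,|A^{k+1}|=|A|\,|A^{k-1}A^{2}|\le |A^{k-1}A|\,|A^{-1}A^{2}|=|A^{k}|\,|A^{3}|,
\end{equation*}
i.e.\ $|A^{k+1}|\le K|A^{k}|$, and the induction closes with exponent exactly $k-2$. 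The inequality you display, $|A\cdot A^{k}|\le |A\cdot A^{2}|\,|A^{2}\cdot A^{k-1}|/|A^{2}|$, has right-hand side $|A^{3}|\,|A^{k+1}|/|A^{2}|$, so it merely asserts $|A^{2}|\le |A^{3}|$ and cannot drive the induction. Two further remarks: the covering-lemma alternative you sketch naturally ends at $|A^{k}|\le K^{k-2}|A^{2}|$, and converting $|A^{2}|$ to $|A|$ costs another factor of $K$, yielding exponent $k-1$ --- one worse than claimed --- so the triangle inequality is the route to take; and the proposition assumes only $A=A^{-1}$, not $e\in A$, but the argument above never uses $e\in A$, so no monotonicity of powers is needed.
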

\begin{proof}
This is a classical result due to Ruzsa; see for instance~\cite[(3.3)]{Helfgott-BAMS2015}.
\end{proof}

\begin{proposition}\label{pr:olson}
Let $G$ be a finite group, and let $A$ be a set of generators of $G$ containing $e$. Then either $A^{3}=G$ or $|A^{3}|\geq 2|A|$.
\end{proposition}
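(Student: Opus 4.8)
The plan is to prove the equivalent statement that, if $A^3 \neq G$, then $|A^3| \geq 2|A|$. Since $e \in A$ we have the chain $A \subseteq A^2 \subseteq A^3 \subsetneq G$. First I would dispose of the case $A^2 = A$: then $A$ is closed under multiplication and contains $e$, so for every $a \in A$ one has $a^{-1} = a^{\mathrm{ord}(a)-1} \in A$; thus $A$ is a subgroup, and $A = \langle A\rangle = G$, contradicting $A^3 \neq G$. Hence $A \subsetneq A^2$, so $|A^2| \geq |A|+1$, and it suffices to show $|A^3| \geq |A^2| + |A| - 1$.

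The generic situation is handled by a one-line translate argument. If there is an $x \in A^2$ with $xA \cap A = \emptyset$ --- equivalently, $x \notin AA^{-1}$ --- then $xA \subseteq A^2 \cdot A = A^3$ while also $A \subseteq A^3$, so $|A^3| \geq |A \sqcup xA| = 2|A|$, even better than needed; and symmetrically the same follows if some $x \in A^2$ satisfies $Ax \cap A = \emptyset$, i.e.\ $x \notin A^{-1}A$. We may therefore assume $A^2 \subseteq AA^{-1} \cap A^{-1}A$, which is precisely the regime in which $A$ resembles a union of cosets of a subgroup and the naive trick breaks down.

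In that residual regime I would appeal to the non-abelian form of Kneser's theorem (Kemperman, Olson) applied to the product $A^3 = A^2 \cdot A$: either $|A^3| \geq |A^2| + |A| \geq 2|A|$ and we are done, or $A^3$ is a union of cosets of a non-trivial finite subgroup $H \leq G$ with $|A^3| \geq |A^2| + |A| - |H|$; since $e \in A^2 \subseteq A^3$ one also gets $H \subseteq A^3$, and from $A \subseteq A^3$ one gets $AH \subseteq A^3$. Reducing to an appropriate quotient and inducting on $|G|$, the hypotheses $\langle A\rangle = G$ and $A^3 \neq G$ are inherited and yield $|A^3| \geq 2|AH| \geq 2|A|$. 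I expect this last step to be the main obstacle: one has to show that a non-trivial coset structure on $A^3$ is incompatible with $A$ generating $G$ unless $A^3$ already equals $G$, and the case where $H$ is not normal requires care. For a self-contained proof one can instead establish $|A^3| \geq |A^2| + |A| - 1$ directly via the classical Dyson $e$-transform, inducting on $|A| + |A^2|$, which internalises exactly this subgroup bookkeeping.
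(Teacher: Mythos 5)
Your opening reductions are fine ($A\subsetneq A^{2}$ when $A^{3}\neq G$, and the translate argument when some $x\in A^{2}$ satisfies $xA\cap A=\emptyset$), and you have reached for the right tool: the paper's entire proof is the citation ``use Olson's Theorem~1 twice with $B=A$'', i.e.\ exactly the Kemperman--Olson nonabelian analogue of Kneser's theorem that you invoke. But the deduction of the proposition from that theorem \emph{is} the actual content here, and that is precisely the step you leave open: you yourself flag the periodic case as ``the main obstacle'' and do not resolve it. The gap is genuine. First, Olson's theorem produces a subset $S\subseteq A^{2}\cdot A$ with $HS=S$ or $SH=S$ and $|S|\geq|A^{2}|+|A|-|H|$; it does not make all of $A^{3}$ a union of $H$-cosets, so the consequences you draw ($H\subseteq A^{3}$, $AH\subseteq A^{3}$, and the passage to a quotient) are not licensed. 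Second, the bound $|A^{3}|\geq|A^{2}|+|A|-|H|$ is vacuous exactly when $|H|$ is large, which is the case one must handle. Third, the proposed rescue ``reduce to a quotient and induct on $|G|$'' does not parse when $H$ is not normal (there is no quotient group), and even for normal $H$ one would need to know both that the hypotheses descend and that a lower bound on $|\bar{A}^{3}|$ in $G/H$ lifts to one on $|A^{3}|$, which again requires control of how $A$ and $A^{3}$ meet the $H$-cosets --- i.e.\ the very periodicity you do not have.

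The fallback you offer is also not available: $|A^{3}|\geq|A^{2}|+|A|-1$ is a Kneser-type inequality whose failure in nonabelian groups is the whole reason Kemperman and Olson prove the weaker statement with the $-|H|$ correction and the periodicity clause, and the Dyson $e$-transform does not carry over to a noncommutative product set; it cannot be used to ``internalise the subgroup bookkeeping''. To complete the argument you must actually dispose of the large-$|H|$ case using the hypothesis $\langle A\rangle=G$ (this is presumably why the paper applies Olson's theorem \emph{twice}, once to $A\cdot A$ and once to $A^{2}\cdot A$, playing the two period subgroups against generation). As written, your proof establishes the proposition only outside the periodic regime, which is exactly the regime the proposition is about.
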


\begin{proof}
Use~\cite[Thm.~1]{Ols84} twice with $B=A$.
\end{proof}

\subsection{Varieties}

\subsubsection{Basic nomenclature}\label{sss:verybasic}

We will use classical language, meaning not ``as in Weil'' but ``as in the first chapter of standard, modern textbooks, such as~\cite{Hartshorne}, \cite{Mumford} or \cite{Shafv1}''. Since there are some dialectal differences between textbooks, we review some basic terms.

A {\em variety} $V$ in $m$-dimensional affine space $\mathbb{A}^{m}$ is defined by a set of $s$ equations of the form $P_{i}(x_{1},\ldots,x_{m})=0$ (for $1\leq i\leq s$), where all $P_i$ are polynomials and $s$ is any non-negative integer. Given a field $K$ to which all coefficients of all $P_i$ belong, we define
\begin{equation*}
V(K)=\{(k_{1},\ldots,k_{m})\in K^{m}:  P_{i}(k_{1},\ldots,k_{m})=0\;\;\forall 1\leq i\leq s\}.
\end{equation*}
Two sets of polynomials $\mathcal{P}=\{P_{i}\}_{i\leq s}$ and $\mathcal{Q}=\{Q_{j}\}_{j\leq t}$ with $P_{i},Q_{j}\in K[x_{1},\ldots,x_{m}]$ define the same variety $V$ if $\mathcal{P}$ and $\mathcal{Q}$ generate the same ideal in $\overline{K}[x_{1},\ldots,x_{m}]$, which we call $I(V)$. By the Nullstellensatz, $V=W$ if and only if $V(\overline{K})=W(\overline{K})$. A {\em subvariety} $W\subseteq V$ is a variety with $I(W)\supseteq I(V)$; $W$ is not necessarily defined over the same field as $V$.

The intersection of two varieties defined by sets of polynomials $\mathcal{P}=\{P_{i}\}_{i\leq s}$ and $\mathcal{Q}=\{Q_{j}\}_{j\leq t}$ is the variety defined by $\mathcal{P}\cup\mathcal{Q}$, while the union is defined by $\{P_{i}Q_{j}\}_{i\leq s, j\leq t}$. Our intersection is the {\em set-theoretic} or {\em reduced} one, as opposed to the scheme-theoretic one. We adopt this definition because it suits our needs better: for instance, it allows us to use \eqref{eq:bezout}, which does not hold for intersections in the other sense unless we impose more conditions; see further comments on the issue in \cite[\S 2.1.2]{BDH24}.

We speak of projective varieties analogously, viz., as defined within projective space $\mathbb{P}^m$ by homogeneous polynomial equations in $m+1$ variables.

In the {\em Zariski topology}, the closed sets are the sets $V(\overline{K})$ for all varieties $V$; since $V(\overline{K})$ determines $V$, we may speak of $V$ itself as being the closed set. Any topological term we use refers to the Zariski topology. The {\em Zariski closure} $\overline{S}$ of a set $S\in \mathbb{A}^m(\overline{K})$ is the smallest variety containing $S$.

A variety $V$ is {\em irreducible}\footnote{Many sources reserve the word {\em variety} for
what we call {\em irreducible variety}, and call {\em closed algebraic sets} what we call {\em varieties}.}
if it cannot be written as $V_{1}\cup V_{2}$ with $V_{1}\not\subseteq V_{2}$ and $V_{2}\not\subseteq V_{1}$. Every $V$ can be uniquely decomposed into a finite union of irreducible varieties not contained in each other, called the {\em irreducible components} of $V$. The {\em dimension} $\dim(V)$ of an irreducible variety $V$ is the largest $d$ for which we can write a chain of irreducible proper subvarieties $V_{0}\subsetneq V_{1}\subsetneq\ldots\subsetneq V_{d}=V$. We define the dimension of a non-irreducible variety $V$ to be the largest of the dimensions of its irreducible components. A variety is {\em pure-dimensional} when all its components are of the same dimension.

A {\em constructible set} in $\mathbb{A}^m$ is defined as the result of applying the union, intersection and complement operations a finite number of times (in any order) to varieties in $\mathbb{A}^m$. The image of a constructible set under a morphism is constructible (\cite[Cor I.8.2]{Mumford}; this is a theorem of Chevalley's).

We say that a statement is true for a {\em generic} point of a variety $V$ if the set of points
$x\in V(\overline{K})$ for which the statement is false all lie in a subvariety $W$ of $V$
with dimension lower than $\dim(V)$. (If $V$ is irreducible, then
$\dim(W)<\dim(V)$ if $W$ is a proper subvariety, i.e., if the complement of $W$ is non-empty.)
If we are to speak of, say, a generic hyperplane, we must first define a variety (or an open set thereof) whose points correspond to hyperplanes. This is easy: there is a bijection between 
hyperplanes in $\mathbb{P}^n$ and points of $\mathbb{P}^n$ itself,
in that $(y_0,y_1,\dotsc,y_n)$ defines the hyperplane
$x_0 y_0 + \dotsc + x_n y_n = 0$. More generally,
linear irreducible $d'$-dimensional varieties on $\mathbb{P}^n$ (i.e., $d'$-dimensional planes) correspond to points the {\em Grassmannian} $G(d'+1,n+1)$, an irreducible projective variety; $d'$-dimensional planes in $\mathbb{A}^n$ correspond to points in an open subset of the Grassmannian $G(d'+1,n+1)$.

A \textit{morphism} $f:\mathbb{A}^{m}\rightarrow\mathbb{A}^{m'}$ is given by an $m'$-tuple of polynomials $f_{i}$ on $m$ variables.
A morphism $f:X\rightarrow Y$ for $X\subseteq\mathbb{A}^{m},Y\subseteq\mathbb{A}^{m'}$
is given as the restriction of a morphism 
$g:\mathbb{A}^{m}\rightarrow\mathbb{A}^{m'}$ such
that $g(x)$ lies on $Y$ for every point $x\in X(\overline{K})$. 

Let us prove an auxiliary result we will soon need, using the concepts above.

\begin{lemma}\label{lem:auxy} Let $X\subset \mathbb{A}^m$ be irreducible. Let $f:X\to \mathbb{A}^n$ be a morphism. Assume that $\dim(\overline{f(X)})<\dim(X)$. Then, for $h$ a generic hyperplane in $\mathbb{A}^m$, $\overline{f(h\cap X)} = \overline{f(X)}$. 
\end{lemma}
\begin{proof}
Let $Y = \overline{f(X)}$.
The Grassmannian $G(m,m+1)$ parametrizes hyperplanes in
$\mathbb{A}^m$. Consider the subvariety $W$ of 
$G(m,m+1)\times X\times Y$ given by 
$$W=\{(h,x,y): \text{$x$ lies on $h$}, f(x)=y\}.$$
By Chevalley's theorem, the image of $\pi(W)$ of $W$ under the projection  
$\pi:G(m,m+1)\times X\times Y\to G(m,m+1)\times Y$ is constructible.
Now, for any point $y\in f(X)$, the preimage $f^{-1}(y)$ has
dimension at least $\dim(X)-\dim(\overline{f(X)})\geq 1$
(by \cite[Cor. to Thm.~I.8.2]{Mumford}), and hence intersects a generic hyperplane in $\mathbb{A}^m$. In other words,
the projection of $\pi(W)\cap (G(m,m+1)\times \{y\})$ to the first
coordinate (that is, to
$G(m,m+1)$) has Zariski closure equal to $G(m,m+1)$ for every $y$.
Since $\pi(W)$ is constructible, it follows that
$\pi(W)$ contains an open subset of $G(m,m+1)\times Y$. Hence,
there is an open subset $U$ of $G(m,m+1)$ such that, for every $h\in U$, the projection of $\pi(W)\cap (h\times Y)$ to the second coordinate (that is, to $Y$) has Zariski closure equal to $Y$. A moment's thought shows that
that projection equals $f(h\cap X)$.
\end{proof}

\subsubsection{Degrees}
Let $V\subseteq\mathbb{A}^{n}$ be a pure-dimensional variety with $\dim(V)=d$. By \cite[\S 3.1.2, Thm.]{DS98}, there is a non-empty open subset $U\subseteq G(n-d+1,n+1)$ such that, for every $(n-d)$-dimensional hyperplane $L$ in $\mathbb{A}^n$ corresponding to a point in $U$, the intersection of
$V$ with $L$ consists of a finite, fixed number of points. We call that number the {\em degree} $\deg(V)$ of $V$.

We can extend the definition of degree to general varieties $V$: $\deg(V)$ is the sum of the degrees of the pure-dimensional parts of $V$.
The bound $\deg(V_{1}\cup V_{2})\leq\deg(V_{1})+\deg(V_{2})$ holds for any $V_{1},V_{2}$ directly by definition. 
Since a generic $(n-d)$-dimensional plane does not intersect a variety
of dimension lower than $d$, we see that equality 
$\deg(V_{1}\cup V_{2})=\deg(V_{1})+\deg(V_{2})$
holds when any two irreducible components of $V_{1},V_{2}$ intersect properly. In particular, if $V$ is the union of irreducible components $V_{i}$, then $\deg(V)=\sum_{i}\deg(V_{i})$.

By \textit{B\'ezout's theorem}, for $V_{1},V_{2}$ pure-dimensional,
\begin{equation}\label{eq:bezout}
\deg(V_1\cap V_2)\leq \deg(V_{1})\deg(V_{2}).
\end{equation}
The statement above, coming from the work on intersection theory of Fulton and Macpherson, can be found for instance in \cite[Ex.~8.4.6]{Ful84}, \cite[(2.26)]{zbMATH03880868}, or \cite[\S II.3.2.2, Thm.]{DS98}; even more general and precise statements can be found in Fulton~\cite{Ful84}. By our definition of degree, we deduce easily from~\eqref{eq:bezout} that~\eqref{eq:bezout} itself holds for any $V_1,V_2$ not necessarily pure-dimensional.

Let $V$ be a variety defined by a single polynomial equation $P=0$ with $\deg(P)>0$. Then $\deg(V)\leq\deg(P)$, and equality holds if $P$ has no repeated factors. By B\'ezout, if $V$ is defined by many equations $P_{i}=0$, which means that $V=\bigcap_{i}V_{i}$ with $V_{i}$ defined by the single $P_{i}=0$, then $\deg(V)\leq\prod_{i}\deg(P_{i})$.

Adding coordinates to the ambient space does not change the degree: $\deg(V\times\mathbb{A}^{t})=\deg(V)$. The degree also behaves well under Cartesian products; if $V\subsetneq\mathbb{A}^{s},V'\subsetneq\mathbb{A}^{t}$ then $\deg(V\times V')\leq\deg(V)\deg(V')$ by B\'ezout ($V\times V'=(V\times\mathbb{A}^{t})\cap(\mathbb{A}^{s}\times V')$).

For a morphism $f:\mathbb{A}^{m}\rightarrow\mathbb{A}^{m'}$ given by an $m'$-tuple of polynomials $f_{i}$, we define the {\em maximum degree} of $f$ to be $\mdeg(f):=\max_{i}\deg(f_{i})$. This definition is not in the standard literature, but it is natural to work with this quantity when dealing with explicit bounds in algebraic geometry. 
For a morphism $f:X\rightarrow Y$, we define $\mdeg(f)$ to be the minimum of $\mdeg(g)$ over all $g:\mathbb{A}^{m}\rightarrow\mathbb{A}^{m'}$ with $g|_{X}=f$.

\begin{lemma}\label{le:zarimdeg}
Let $f:V\rightarrow\mathbb{A}^{n}$ be a morphism. Then
\begin{equation*}
\deg(\overline{f(V)})\leq\deg(V)\mdeg(f)^{\dim(\overline{f(V)})}.
\end{equation*}
\end{lemma}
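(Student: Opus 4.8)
The plan is to reduce to the case where $f$ is given by a linear projection, by means of the standard graph trick, and then to bound the degree of a projection using Bézout's theorem together with a generic linear slicing argument. First I would replace $f:V\to\A^{m}$ by its graph $\Gamma_{f}\subseteq V\times\A^{m}\subseteq\A^{n}\times\A^{m}$ (where $V\subseteq\A^{n}$), defined inside $V\times\A^{m}$ by the $m$ equations $y_{i}=\overline{f}_{i}(x)$, where $\overline{f}_{i}$ is an extension of $f_{i}$ to $\A^{n}$ with $\deg(\overline{f}_{i})\leq\deg(f)$. Since $\Gamma_{f}$ is cut out of $V\times\A^{m}$ by these $m$ hypersurfaces of degree $\leq\deg(f)$, Bézout gives $\deg(\Gamma_{f})\leq\deg(V)\deg(f)^{m}$; but this is wasteful, and the point of the lemma is that only $\dim(\overline{f(V)})$ many of these equations matter, because after projecting to $\A^{m}$ the fibres are positive-dimensional unless $\dim(\overline{f(V)})$ is already $m$. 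So instead I would argue more carefully: the projection $\pi:\Gamma_{f}\to\A^{m}$ is a bijection onto $f(V)$ with inverse a morphism (projection back), so $\overline{f(V)}=\overline{\pi(\Gamma_{f})}$ and $\dim\overline{f(V)}=\dim\Gamma_{f}=\dim\overline{f(V)}=:d$.

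Now for the key step, bounding $\deg(\overline{f(V)})$ in terms of $\deg(V)$ and $\deg(f)$. Let $d=\dim(\overline{f(V)})$. By definition of degree, $\deg(\overline{f(V)})$ is the number of points in $\overline{f(V)}\cap L$ for a generic linear subspace $L\subseteq\A^{m}$ of codimension $d$, i.e.\ $L$ is the intersection of $d$ generic affine hyperplanes $H_{1},\dots,H_{d}$. Pulling back, $f^{-1}(L)\cap V$ is the subvariety of $V$ cut out by the $d$ equations $\ell_{j}(\overline{f}_{1}(x),\dots,\overline{f}_{m}(x))=0$, where $\ell_{j}$ is the affine-linear form defining $H_{j}$; each such equation has degree $\leq\deg(f)$ in the coordinates of $\A^{n}$. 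By Bézout (as stated in the excerpt, $\deg(V_{1}\cap V_{2})\leq\deg(V_{1})\deg(V_{2})$, applied $d$ times), the variety $f^{-1}(L)\cap V$ has degree $\leq\deg(V)\deg(f)^{d}$. Finally, every point of $\overline{f(V)}\cap L$ that lies in $f(V)$ has a nonempty preimage in $f^{-1}(L)\cap V$, and for generic $L$ the number of zero-dimensional components of $f^{-1}(L)\cap V$ mapping onto such a point is at least one; hence the number of such points is at most $\deg(f^{-1}(L)\cap V)\leq\deg(V)\deg(f)^{d}$. Since for generic $L$ all of $\overline{f(V)}\cap L$ lies in $f(V)$ (a generic codimension-$d$ slice of a $d$-dimensional variety meets it in finitely many points, all in the dense open subset $f(V)$ once we discard the proper closed locus where $\overline{f(V)}\setminus f(V)$ lives — here one uses that $f(V)$ is constructible and dense in $\overline{f(V)}$), this gives $\deg(\overline{f(V)})\leq\deg(V)\deg(f)^{d}$, as desired.

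A small technical point to handle carefully is the passage ``generic $L$ meets $\overline{f(V)}$ only inside $f(V)$'': the set $\overline{f(V)}\setminus f(V)$ is contained in a proper closed subvariety $Z\subsetneq\overline{f(V)}$ with $\dim Z<d$, so a generic codimension-$d$ linear subspace misses $Z$ entirely, and then $\overline{f(V)}\cap L=f(V)\cap L$. One also needs that slicing $\overline{f(V)}$ by a generic $L$ of complementary dimension actually realizes the degree (true by definition of degree) and simultaneously that $f^{-1}(L)\cap V$ has no component of dimension $>0$ lying over a single point — which can fail, but only forces $\deg(f^{-1}(L)\cap V)$ to overcount, so the inequality direction we want is preserved. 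The main obstacle is thus not any deep input but the bookkeeping around genericity: ensuring the same generic $L$ simultaneously (i) computes $\deg(\overline{f(V)})$ by transverse intersection, (ii) avoids the lower-dimensional locus $\overline{f(V)}\setminus f(V)$, and (iii) keeps the Bézout bound on the pullback valid; all three are generic conditions, so a generic $L$ satisfies all of them, and the lemma follows.
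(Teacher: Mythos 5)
Your core argument --- slice $\overline{f(V)}$ by a generic linear space $L$ of codimension $d=\dim(\overline{f(V)})$, pull back to $V$, and apply B\'ezout to bound $\deg(f^{-1}(L)\cap V)\leq\deg(V)\deg(f)^{d}$ --- is exactly the paper's argument, and the genericity bookkeeping in your last paragraph (avoiding $\overline{f(V)}\setminus f(V)$, transversality) is handled correctly. The graph construction in your first paragraph is harmless but unnecessary (and note that $\dim\Gamma_{f}=\dim V$, not $\dim\overline{f(V)}$, so that aside is garbled).

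The one step that is false as stated is: ``for generic $L$ the number of zero-dimensional components of $f^{-1}(L)\cap V$ mapping onto such a point is at least one.'' When $\dim(V)>d$ the fibre of $f$ over a point of $f(V)\cap L$ is positive-dimensional, and $f^{-1}(L)\cap V$ may have \emph{no} zero-dimensional components at all (e.g.\ $V=\mathbb{A}^{2}$, $f$ the projection to $\mathbb{A}^{1}$, $L$ a point). You acknowledge this in the final paragraph, but ``only forces $\deg$ to overcount'' is not an argument. Two clean ways to close the gap: (i) count \emph{irreducible} components rather than zero-dimensional ones --- each irreducible component $Z$ of $f^{-1}(L)\cap V$ has $\overline{f(Z)}$ irreducible and contained in the finite set $\overline{f(V)}\cap L$, hence equal to a single point, so distinct points of $f(V)\cap L$ lie under disjoint nonempty unions of components, giving $|f(V)\cap L|\leq\#\{\text{components}\}\leq\deg(f^{-1}(L)\cap V)$; or (ii) do what the paper does and first cut $V$ by $\dim(V)-d$ generic hyperplanes, which does not increase $\deg(V)$, preserves dominance onto $\overline{f(V)}$, and makes the generic fibre finite and nonempty, so that $f^{-1}(L)$ is genuinely a finite set of cardinality at most its degree and the count is immediate. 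Either repair makes your proof complete; as written, the point-counting step does not go through.
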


\begin{proof}
We may assume without loss of generality that $V$ is pure-dimensional. We thank user ``Angelo'' on MathOverflow (question 63451) for the following proof. 

Applying Lemma~\ref{lem:auxy} repeatedly,
we reduce the dimension of $V$ by cutting it with generic hyperplanes until $\dim(V)=\dim(\overline{f(V)})=d$,
without changing $\overline{f(V)}$.
This operation does not increase $\deg(V)$, and in addition now a generic fibre of $f$ is finite and nonempty. Then, given a generic linear space $L$ of codimension $d$, we have $\deg(\overline{f(V)})=|L\cap\overline{f(V)}|\leq|f^{-1}(L)|$, where $f^{-1}(L)$ is finite (of cardinality bounded by its degree); $f^{-1}(L)$ is also the intersection of $V$ and of $d$ varieties each defined by one linear combination of the polynomials defining $f$, so by B\'ezout $\deg(f^{-1}(L))\leq\deg(V)\mdeg(f)^{d}$.
\end{proof}

In particular, since any projection map $\pi$ has maximum degree $1$, we have $\deg(\overline{\pi(V)})\leq \deg(V)$.

\subsection{Untwisted classical groups}\label{se:chev}

An \textit{untwisted classical group} $G$ is for us one of the following: $\mathrm{SL}_{n},\mathrm{SO}_{2n}^{+},\mathrm{SO}_{2n+1},\mathrm{Sp}_{2n}$. The same groups are called ``classical Chevalley'' in~\cite{Hel11}. Following the conventional setup of CFSG, as in (for instance)~\cite[\S 1.2]{Wil09}, we shall exclude the small cases $\mathrm{SO}_{2n}^{+}$ with $n\leq 3$, $\mathrm{SO}_{2n+1}$ with $n\leq 2$, and $\mathrm{Sp}_{2}$, since either their subquotients fail to yield families of finite simple groups or they introduce repetitions in the list of such families. We shall also exclude the case of even characteristic for $\mathrm{SO}_{2n}^{+}$ and $\mathrm{SO}_{2n+1}$: the first because $\mathrm{SO}_{2n}^{+}(\mathbb{F}_{q})$ is not connected for $q$ even, adding unnecessary complications (and $\mathrm{char}(\mathbb{F}_{q})=2$ is excluded by the hypotheses of the main theorem anyway); the second because $\mathrm{SO}_{2n+1}(\mathbb{F}_{q})\simeq\mathrm{Sp}_{2n}(\mathbb{F}_{q})$ (see for example~\cite[\S 3.4.7]{Wil09}).

Each of the untwisted classical groups $G$ can be defined as a subvariety of the corresponding space of matrices. Note that the representation of $\mathrm{SL}_{n}$ that we choose to use is not the most common one as a set inside $\mathrm{Mat}_{n}$ (although we switch to using the common one in the final section, since some computations improve such as in Proposition~\ref{pr:nonrs}).

\begin{definition}\label{de:ambient}
An untwisted classical group $G=G_{n}$ (of parameter $n$) over a field $K$ of characteristic $p\geq 0$ is one of the following varieties:
\begin{align*}
\mathrm{SL}_{n} & =\left\{\begin{pmatrix} x_{1} & 0 \\ 0 & x_{2} \end{pmatrix}\in\mathbb{A}^{4n^{2}}:\det(x_{1})=1,x_{1}x_{2}^{\top}=\mathrm{Id}_{n}\right\}, & & n\geq 2, \\
\mathrm{SO}_{2n}^{+} & =\{x\in\mathbb{A}^{4n^{2}}:\det(x)=1,x^{\top}M_{1}x=M_{1}\} & & \text{$p\neq 2$, $n\geq 4$,} \\
\mathrm{SO}_{2n+1} & =\{x\in\mathbb{A}^{(2n+1)^{2}}:\det(x)=1,x^{\top}M_{2}x=M_{2}\} & & \text{$p\neq 2$, $n\geq 3$,} \\
\mathrm{Sp}_{2n} & =\{x\in\mathbb{A}^{4n^{2}}:x^{\top}M_{3}x=M_{3}\} & & n\geq 2,
\end{align*}
with
\begin{align*}
M_{1} & =\begin{pmatrix} 0 & \mathrm{Id}_{n} \\ \mathrm{Id}_{n} & 0 \end{pmatrix}, &
M_{2} & =\begin{pmatrix} 0 & \mathrm{Id}_{n} & 0 \\ \mathrm{Id}_{n} & 0 & 0 \\ 0 & 0 & 1 \end{pmatrix}, &
M_{3} & =\begin{pmatrix} 0 & \mathrm{Id}_{n} \\ -\mathrm{Id}_{n} & 0 \end{pmatrix}.
\end{align*}
\end{definition}

The choice of $M_{1},M_{2},M_{3}$ above is standard: see \cite[Prop.~2.5.3(i)]{KL90}, \cite[Prop.~2.5.3(iii)]{KL90}, and \cite[Prop.~2.4.1]{KL90} respectively.

The inversion map $x\mapsto x^{-1}$ is a polynomial map whenever $\det(x)=1$, so Definition~\ref{de:ambient} indeed yields varieties, and as a matter of fact \textit{algebraic groups} since multiplication and inversion are also morphisms. Furthermore, in the representations above, the morphism $^{-1}:G\rightarrow G$ has maximum degree $1$: this happens in $\mathrm{SO}_{2n}^{+}$, $\mathrm{SO}_{2n+1}$ and $\mathrm{Sp}_{2n}$ because the inverse is related to the transpose, and in $\mathrm{SL}_{n}$ because of our choice of embedding. As a matter of fact, we have defined $\mathrm{SL}_{n}$ using this representation exactly because we wanted the inversion map to have maximum degree $1$.

Each $G$ in Definition~\ref{de:ambient} is connected \cite[Ex.~2.42-2.43-2.44]{Mil17}, hence irreducible \cite[Cor.~1.35]{Mil17}. Moreover, they are all \textit{semisimple} \cite[Ex.~19.19]{Mil17}. Throughout the rest of the paper, when dealing with an untwisted classical group $G$ we fix the following notation:
\begin{itemize}
\item $n\geq 2$ is the parameter appearing in the subscripts of $\mathrm{SL}_{n},\mathrm{SO}_{2n}^{+},\mathrm{SO}_{2n+1},\mathrm{Sp}_{2n}$;
\item $r$ is the \textit{rank} of $G$, i.e.\ the dimension of a maximal torus in $G$;
\item $N$ is the dimension of the affine space on which $G$ acts, i.e.\ $G<\mathrm{GL}_{N}$, so that in particular we have $G\subsetneq\mathbb{A}^{N^{2}}$ as a variety;
\item $\ell=\dim(G)/r$.
\end{itemize}

Each $G$ in Definition~\ref{de:ambient} is a variety, so it has a degree. We get $\deg(\mathrm{SL}_{n})\leq 2^{n^{2}}n$ from the equations defining it; we can improve the bound for $n=2$ to $\deg(\mathrm{SL}_{2})=2$, since the entries of $x_{2}$ are linear in the entries of $x_{1}$. For the other untwisted classical groups, precise computations of $\deg(G)$ are given in~\cite[Thm.~1.1 and Cor.~3.4]{BBBKR}; we give here some more manageable upper bounds. 
A simple way to do so is to use directly Definition~\ref{de:ambient}, but we can in fact do a bit better. By \cite[Prop.~4.2 and Rem.~4.3]{BBBKR}, we have
\begin{align*}
\deg(\mathrm{SO}_{N}^{+}),\deg(\mathrm{SO}_{N}) & =2^{N-1}P(N), & \deg(\mathrm{Sp}_{N}) & =P(N+1),
\end{align*}
where $P(k)$ is the number of ways to construct $\lfloor k/2\rfloor$ non-intersecting paths $\omega_{1},\ldots,\omega_{\lfloor k/2\rfloor}$ on the grid graph of vertices $\mathbb{Z}^{2}$, so that $\omega_{i}$ starts at $(2i-k,0)$, ends at $(0,k-2i)$, and is made of unitary steps that increase one of the coordinates. Dropping the ``non-intersecting'' condition, we can bound $P(k)$ by $\prod_{i=1}^{\lfloor k/2\rfloor}\binom{2(k-2i)}{k-2i}$, and then each binomial coefficient trivially by $2^{2(k-2i)}$. Therefore, we obtain
\begin{align*}
\deg(\mathrm{SO}_{2n}^{+}) & \leq 2^{2n-1}\prod_{i=1}^{n}2^{2(2n-2i)}=2^{2n^{2}-1}, \\
\deg(\mathrm{SO}_{2n+1}) & \leq 2^{2n}\prod_{i=1}^{n}2^{2(2n-2i+1)}=2^{2n^{2}+2n}, \\
\deg(\mathrm{Sp}_{2n}) & \leq\prod_{i=1}^{n}2^{2(2n-2i+1)}=2^{2n^{2}}.
\end{align*}

Table~\ref{ta:basicg} sums up values of relevant quantities for untwisted classical groups.
We choose to express our intermediate bounds mostly in terms of the rank $r$, so the entries below follow the same convention for simplicity. We write $\min(r)$ for the minimum possible rank for $G$ according to the definition.

\begin{table}[ht!]
\centering
\caption{Properties of untwisted classical groups of rank $r$.}
\begin{tabular}{|l|l|l|l|l|l|l|l|}
\hline
$G$ & \!\!\!\! & $\min(r)$ & $n$ & $N$ & $\dim(G)$ & $\deg(G)$ & $\ell$ \\ \hline\hline
$\mathrm{SL}_{n}$ & \!\!\!\! & $1$ & $r+1$ & $2r+2$ & $r^{2}+2r$ & \begin{tabular}{l}$\leq 2^{r^{2}+4r}$ \\ ($2$ for $r=1$) \end{tabular} & $r+2$ \\ \hline
$\mathrm{SO}_{2n}^{+}$ & \!\!\!\! & $4$ & $r$ & $2r$ & $2r^{2}-r$ & $\leq 2^{2r^{2}-1}$ & $2r-1$ \\ \hline
$\mathrm{SO}_{2n+1}$ & \!\!\!\! & $3$ & $r$ & $2r+1$ & $2r^{2}+r$ & $\leq 2^{2r^{2}+2r}$ & $2r+1$ \\ \hline
$\mathrm{Sp}_{2n}$ & \!\!\!\! & $2$ & $r$ & $2r$ & $2r^{2}+r$ & $\leq 2^{2r^{2}}$ & $2r+1$ \\ \hline
\end{tabular}
\label{ta:basicg}
\end{table}

Furthermore, for all $G$ the multiplication map $\cdot:G\times G\rightarrow G$ has maximum degree $2$ and the inversion map $^{-1}:G\rightarrow G$ has maximum degree $1$. 

We also list the order of the finite groups $G(\mathbb{F}_{q})$ for all $G$ of rank $r$ and all $q$ with $\mathrm{char}(\mathbb{F}_{q})>2$. See~\cite[\S 3.3.1, \S 3.5, \S 3.7.2]{Wil09}.
\begin{equation}\label{eq:numelgr}
\begin{aligned}
|\mathrm{SL}_{r+1}(\mathbb{F}_{q})| & =\frac{1}{q-1}\prod_{i=0}^{r}(q^{r+1}-q^{i}), & |\mathrm{SO}_{2r}^{+}(\mathbb{F}_{q})| & =q^{r(r-1)}(q^{r}-1)\prod_{i=1}^{r-1}(q^{2i}-1), \\
|\mathrm{SO}_{2r+1}(\mathbb{F}_{q})| & =q^{r^{2}}\prod_{i=1}^{r}(q^{2i}-1), & |\mathrm{Sp}_{2r}(\mathbb{F}_{q})| & =q^{r^{2}}\prod_{i=1}^{r}(q^{2i}-1).
\end{aligned}
\end{equation}

We often resort to working with {\em Lie algebras} $\mathfrak{g}$ instead of the groups $G$ themselves. For $G$ as in Definition~\ref{de:ambient}, the corresponding $\mathfrak{g}$ are the following varieties:
\begin{align*}
\mathfrak{sl}_{n} & =\left\{\begin{pmatrix} x_{11} & 0 \\ 0 & x_{22} \end{pmatrix}\in\mathbb{A}^{4n^{2}}:x_{11}=-x_{22}^{\top},\mathrm{tr}(x_{11})=0\right\}, \\
\mathfrak{so}_{2n}^{+} & =\{x\in\mathbb{A}^{4n^{2}}:x+M_{1}x^{\top}M_{1}^{-1}=0\} \\
 & =\left\{\begin{pmatrix} x_{11} & x_{12} \\ x_{21} & x_{22} \end{pmatrix}\in\mathbb{A}^{4n^{2}}:x_{11}=-x_{22}^{\top},x_{12}=-x_{12}^{\top},x_{21}=-x_{21}^{\top}\right\}, \\
\mathfrak{so}_{2n+1} & =\{x\in\mathbb{A}^{(2n+1)^{2}}:x+M_{2}x^{\top}M_{2}^{-1}=0\}, \\
 & =\left\{\begin{pmatrix} x_{11} & x_{12} & y_{1} \\ x_{21} & x_{22} & y_{2} \\ -y_{2}^{\top} & -y_{1}^{\top} & 0 \end{pmatrix}\in\mathbb{A}^{(2n+1)^{2}}:x_{11}=-x_{22}^{\top},x_{12}=-x_{12}^{\top},x_{21}=-x_{21}^{\top}\right\}, \\
\mathfrak{sp}_{2n} & =\{x\in\mathbb{A}^{4n^{2}}:x+M_{3}x^{\top}M_{3}^{-1}=0\} \\
 & =\left\{\begin{pmatrix} x_{11} & x_{12} \\ x_{21} & x_{22} \end{pmatrix}\in\mathbb{A}^{4n^{2}}:x_{11}=-x_{22}^{\top},x_{12}=x_{12}^{\top},x_{21}=x_{21}^{\top}\right\},
\end{align*}
where $M_{1},M_{2},M_{3}$ are as in Definition~\ref{de:ambient}, where the $x_{ij}$ are $n\times n$ matrices, and where the $y_{i}$ are $n\times 1$ matrices. The usual definition of $\mathfrak{so}_{2n}^{+},\mathfrak{so}_{2n+1}$ is presented in a slightly different way, with the identity matrix instead of $M_{1},M_{2}$, but the resulting Lie algebras are isomorphic: see \cite[App.~A.2--A.4]{Kir08}. All the $\mathfrak{g}$ above are linear varieties, with $\dim(\mathfrak{g})=\dim(G)$ and $|\mathfrak{g}(\mathbb{F}_{q})|=q^{\dim(G)}$ for $\mathrm{char}(\mathbb{F}_{q})>2$.

\subsection{Conjugacy classes}

Let $G$ be an untwisted classical group over a field $K$. For an element $g\in G(K)$, we define two important objects: the {\em conjugacy class} $\Cl(g)=\{xgx^{-1}:x\in G(\overline{K})\}$ and the {\em centralizer} $C(g)=\{x\in G:gx=xg\}$. By definition, $C(g)$ is a variety. We can also ask ourselves whether there is a variety $V$ such that $V(\overline{K})=\Cl(g)$: if so, we allow ourselves to use $\Cl(g)$ as a shorthand for $V$ itself, and make sense of the statement ``$\Cl(g)$ is a variety''. If $g\in G(K)$, the intersection $\Cl(g)\cap G(K)$ contains the $G(K)$-conjugacy class of $g$, meaning the set $\{xgx^{-1}:x\in G(K)\}$; the latter may however be smaller, which may result in additional technicalities (as in Lemma~\ref{le:escwecan}).

Recall that an element $g\in G(K)$ is {\em semisimple} if it is diagonalizable over $\overline{K}$ \cite[\S 9.b]{Mil17}, and {\em regular} if $C(g)$ is of minimal dimension among all the centralizers of the elements of $G$ \cite[\S 1.6]{Hum95a}. In general $\Cl(g)$ may not be a variety, but it is one if $g$ is regular semisimple.

\begin{proposition}\label{pr:clvar}
Let $G<\mathrm{GL}_{N}$ be an untwisted classical group of rank $r$ over a field $K$. Let $g\in G(K)$ be regular semisimple.

Then $\Cl(g)$ is a variety over $K$, with $\dim(\Cl(g))=\dim(G)-r=\left(1-\frac{1}{\ell}\right)\dim(G)$ and $\deg(\Cl(g))\leq r!\deg(G)\leq r! 2^{2 r^2 + 2 r}$.
\end{proposition}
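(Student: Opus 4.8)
The plan is to realize $\mathrm{Cl}(g)$ as the image of the conjugation morphism $c_g\colon G\to G$, $x\mapsto xgx^{-1}$, and then read off both the dimension and the degree from this description. For the dimension: since $g$ is regular semisimple, its centralizer $C(g)$ is a maximal torus, so $\dim C(g)=r$; the fibres of $c_g$ over points of $\mathrm{Cl}(g)$ are cosets of $C(g)$, hence all of dimension $r$, and the orbit–stabilizer theorem for algebraic group actions gives $\dim\mathrm{Cl}(g)=\dim G-\dim C(g)=\dim G-r=(1-\tfrac1\ell)\dim G$. This also confirms that $\mathrm{Cl}(g)$ is a variety (the orbit of a semisimple element under a reductive group is closed), as already asserted in the paragraph preceding the statement.

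For the degree I would apply Lemma~\ref{le:zarimdeg} to $f=c_g$ with domain $V=G$: this gives $\deg(\mathrm{Cl}(g))=\deg(\overline{c_g(G)})\le\deg(G)\deg(c_g)^{\dim(\mathrm{Cl}(g))}$. The map $c_g$ is the composition $x\mapsto(x,g,x^{-1})\mapsto xgx^{-1}$, i.e.\ two multiplications and one inversion; the excerpt records that on $G$ the multiplication map has degree $2$ and the inversion map has degree $N-1$. Chaining these (degree of a composite is at most the product of degrees) gives a crude bound $\deg(c_g)\le 2\cdot 2\cdot(N-1)=4(N-1)$ at worst, but one wants the cleaner bound $\deg(c_g)\le N-1$: writing $c_g(x)=x\,g\,x^{-1}$ with $x^{-1}$ expressed via the cofactor formula (entries are degree-$(N-1)$ polynomials in the entries of $x$, since $\det x=1$ on $G$), each entry of $xgx^{-1}$ is a polynomial of degree $1+0+(N-1)=N$ in the coordinates — but restricted to $G$ we may use $\det x = 1$ to lower this, or more simply note that $c_g$ factors through $x\mapsto (x, x^{-1})$, a morphism of degree $N-1$, followed by $(x,y)\mapsto xgy$, a morphism of degree $1$ in each, hence degree... — this is the one place needing care. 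Taking the safe estimate and feeding it in, together with $\dim(\mathrm{Cl}(g))\le\dim(G)$, yields $\deg(\mathrm{Cl}(g))\le\deg(G)\cdot(N-1)^{\dim G}$, which is far weaker than the claimed $(N-1)!\,\deg(G)$; so Lemma~\ref{le:zarimdeg} applied naively is not enough, and the factorial bound must come from a finer argument.

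The sharper route, which I expect is the intended one, is the classical fact that the conjugacy class of a regular semisimple element is cut out inside $G$ by the characteristic polynomial: $\mathrm{Cl}(g)=\{x\in G : \operatorname{charpoly}(x)=\operatorname{charpoly}(g)\}$, since over $\overline K$ a semisimple matrix is determined up to conjugacy by its eigenvalues, and regularity (distinct eigenvalues, in the relevant sense) ensures the converse inclusion stays inside a single $G$-orbit. Thus $\mathrm{Cl}(g)$ is the intersection of $G$ with the $N$ hypersurfaces obtained by equating the coefficients of $\operatorname{charpoly}(x)$ — the $k$-th such coefficient being a polynomial of degree $k$ in the entries of $x$ — for $k=1,\dots,N$ (one of these is $\det(x)-1$, already built into $G$). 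By B\'ezout (the multiplicativity of degree under intersection recalled in the Varieties subsection), $\deg(\mathrm{Cl}(g))\le\deg(G)\prod_{k=1}^{N-1}k=(N-1)!\,\deg(G)$. Finally, substituting the table values: $N-1\le 2r$ in all four families, so $(N-1)!\le(2r)!\le(2r)^{2r}=2^{2r}r^{2r}$, and $\deg(G)\le 2^{2r^2+2r}$ (the largest entry, $\mathrm{SO}_{2r+1}$), giving $(N-1)!\,\deg(G)\le 2^{2r^2+4r}r^{2r}\le 2^{3r^2}r^{2r}$ for $r\ge 4$; the small-rank cases are checked directly. The main obstacle is pinning down the exact scheme-theoretic description of $\mathrm{Cl}(g)$ — in particular verifying that the characteristic-polynomial equations cut out precisely the orbit (not a larger union of orbits or a non-reduced scheme) when $g$ is regular semisimple, which is where the regularity hypothesis is essential and where the type-by-type structure of classical groups (symmetric/symplectic forms constraining the eigenvalue multiset) may need a short separate check.
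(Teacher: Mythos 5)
Your dimension argument and the core of your degree computation coincide with the paper's: the paper also cuts out the $\mathrm{GL}_{N}$-conjugacy class of $g$ inside $G$ by equating the coefficients of the characteristic polynomial (degrees $1,\ldots,N-1$, the constant term being forced by $\det=1$) and applies B\'ezout to get $(N-1)!\deg(G)$. The issue is precisely the step you flag at the end and leave open: you assert $\mathrm{Cl}(g)=\{x\in G:\operatorname{charpoly}(x)=\operatorname{charpoly}(g)\}$, but a priori the right-hand side is only $\widetilde{\mathrm{Cl}}(g)=\{xgx^{-1}:x\in\mathrm{GL}_{N}(\overline{K})\}\cap G(\overline{K})$, which could be a union of several $G(\overline{K})$-classes. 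One must show that two elements of $G$ that are conjugate in $\mathrm{GL}_{N}(\overline{K})$ are already conjugate in $G(\overline{K})$. The paper does this explicitly: for $\mathrm{SL}_{N}$ by rescaling the conjugating matrix by $\det(x)^{-1/N}$, and for the orthogonal and symplectic groups by invoking a theorem of Freudenthal~\cite{Fre52} (for groups defined by $g^{\top}Eg=E$ with $E$ symmetric or antisymmetric, $\mathrm{GL}_{N}$-conjugacy implies $G$-conjugacy). Your proposal as written does not supply this, and it is not automatic — note also that regularity of $g$ in $G$ does \emph{not} in general mean distinct eigenvalues as a matrix (e.g.\ in $\mathrm{SO}_{2n+1}$ and $\mathrm{Sp}_{2n}$ the spectrum is symmetric under $\lambda\mapsto\lambda^{-1}$), so the "converse inclusion" you appeal to genuinely needs the type-by-type input.

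That said, the gap is repairable in two ways. Either import Freudenthal's theorem as the paper does, or observe that for the degree bound alone the identification is not strictly needed: $\widetilde{\mathrm{Cl}}(g)$ is a finite union of closed $G(\overline{K})$-classes of semisimple elements, each of dimension at most $\dim(G)-r$, so $\mathrm{Cl}(g)$ (irreducible, closed, of dimension exactly $\dim(G)-r$) is an irreducible component of $\widetilde{\mathrm{Cl}}(g)$ and hence $\deg(\mathrm{Cl}(g))\leq\deg(\widetilde{\mathrm{Cl}}(g))\leq(N-1)!\deg(G)$. Your first attempt via Lemma~\ref{le:zarimdeg} is correctly discarded (the paper makes the same remark after the proof), and your final numerical verification is fine.
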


\begin{proof}
Any untwisted classical group is connected semisimple, and for such groups $\Cl(g)$ is Zariski closed (and therefore a variety) if and only if $g$ is semisimple~\cite[\S 1.7]{Hum95a}. Since $g$ is regular, the dimension of the centralizer $C(g)$ is $r$ \cite[\S 2.3]{Hum95a}, which is $\frac{\dim(G)}{\ell}$ by definition. We have $\dim(\Cl(g))=\dim(G)-\dim(C(g))$ \cite[\S 1.5]{Hum95a}, so we get $\dim(\Cl(g))=\dim(G)-r=\left(1-\frac{1}{\ell}\right)\dim(G)$.

Finally, we study the degree of $\Cl(g)$.
Elements of $\mathrm{GL}_{N}$ are conjugate if and only if they have the same Jordan canonical form, which in the semisimple case means that they have the same eigenvalues. Thus, any two semisimple $g,g'\in G(K)$ are conjugate in $\mathrm{GL}_{N}$ if and only if the characteristic polynomials $p_{g}(t)=\det(t\mathrm{Id}_{N}-g)$ and $p_{g'}(t)=\det(t\mathrm{Id}_{N}-g')$ are identical.

Let $G=\mathrm{SL}_{n}<\mathrm{GL}_{2n}$ as given in Definition~\ref{de:ambient}, and denote by $\pi:\mathrm{Mat}_{2n}\rightarrow\mathrm{Mat}_{n}$ the restriction to the upper left corner. As we just said, for $g,g'\in G(K)$ semisimple, $\pi(g),\pi(g')$ are conjugate in $\mathrm{GL}_{n}$ if and only if $p_{\pi(g)}(t)=p_{\pi(g')}(t)$. The coefficient of $t^{i}$ in $p_{\pi(g')}(t)$ is of degree $n-i$ in the entries of $\pi(g')$ (and of $g'$), and if $g'\in G(K)$ then the coefficient of $t^{0}$ must be $(-1)^{n}\det(\pi(g'))=(-1)^{n}$. Thus, there is a variety $W$ of degree $\leq\prod_{i=1}^{n-1}(n-i)=(n-1)!$ such that $\pi(g')$ is conjugate to $\pi(g)$ in $\mathrm{GL}_{n}$ if and only if $\pi(g')\in W$: by the definitions of $W$ and $\pi$, we can think of $W$ as a variety inside $\mathrm{Mat}_{2n}$, and have $g'$ conjugate to $g$ via some element of the form $\begin{pmatrix} h & 0 \\ 0 & (h^{-1})^\top \end{pmatrix}$ (with $h\in\mathrm{GL}_{n}$) if and only if $g'\in W$. Lastly, if we have such an element, we can replace $h$ by $\det(h)^{-1/n}h$: we conclude that $g'$ is conjugate to $g$ via an element of $G$ if and only if $g'\in W$. Hence, $\Cl(g)$ has degree bounded by
\begin{equation*}
\deg(W\cap G)\leq\deg(W)\deg(G)\leq(n-1)!\deg(G)=r!\deg(G).
\end{equation*}
Since $g$ is defined over $K$, by the construction of $W$ it is clear that $\Cl(g)$ is defined over $K$.

Now let $G=\mathrm{SO}_{N}^{+},\mathrm{SO}_{N},\mathrm{Sp}_{N}$. Define the set
\begin{equation}\label{eq:heartsic}
\widetilde{\Cl}(g)=\{xgx^{-1}:x\in\mathrm{GL}_{N}(\overline{K})\}\cap G(\overline{K}).
\end{equation}
Since the characteristic polynomials of elements of $\mathrm{SO}_{N}^{+},\mathrm{SO}_{N},\mathrm{Sp}_{N}$ are symmetric, we only need to consider the coefficients of $t^{i}$ with $i\geq\frac{N}{2}$, as the other coefficients are equal to these ones up to sign. Thus, $\deg(\widetilde{\Cl}(g))\leq\deg(G)\prod_{i=\lceil N/2\rceil}^{N-1}(N-i)=\lfloor\frac{N}{2}\rfloor!\deg(G)$. It remains to prove that $\widetilde{\Cl}(g)=\Cl(g)$, or in other words that if $g,g'$ are conjugate by an element of $\mathrm{GL}_{N}(\overline{K})$ then they are conjugate by an element of $G(\overline{K})$. We resort to an elementary result of Freudenthal~\cite{Fre52}: for any algebraic group $G<\mathrm{GL}_{N}$ defined by $g^{\top}Eg=E$ for some $E$ symmetric or anti-symmetric, $g,g'\in G(\overline{K})$ are conjugate by an element of $\mathrm{GL}_{N}(\overline{K})$ if and only if they are conjugate by an element of $G(\overline{K})$. We can take $E=M_{1},M_{2},M_{3}$ for $G=\mathrm{O}_{N}^{+},\mathrm{O}_{N},\mathrm{Sp}_{N}$ respectively, and then pass from $\mathrm{O}_{N}^{+},\mathrm{O}_{N}$ to $\mathrm{SO}_{N}^{+},\mathrm{SO}_{N}$ by replacing $h$ by $\det(h)^{-1/N}h$, as we did for $\mathrm{SL}_{N}$. Hence $\Cl(g)$ has degree bounded by
\begin{equation*}
\deg(\widetilde{\Cl}(g))\leq\left\lfloor\frac{N}{2}\right\rfloor!\deg(G)=r!\deg(G).
\end{equation*}
Again, by construction $\Cl(g)$ is defined over $K$.

In all cases, we may bound $\deg(G)$ using Table~\ref{ta:basicg}.
\end{proof}

Alternatively, $\deg(\Cl(g))$ can be bounded using $\Cl(g)=\overline{\Cl(g)}$ and Lemma~\ref{le:zarimdeg}, with
$f(h) = h g h^{-1}$. Proposition~\ref{pr:clvar} is tighter, but the alternative method may be used for other semisimple groups $G$ for which~\cite{Fre52} does not apply.

\subsection{Tori}

A {\em torus} $T$ of an untwisted classical group $G$ is an algebraic subgroup of $G$ isomorphic to $(\mathrm{GL}_{1})^{m}$ for some $m\leq r$ (where $\mathrm{GL}_{1}$ is the same as the multiplicative algebraic group). A {\em maximal torus} of $G$ is a torus for which $m=r$.

In Section~\ref{se:de-torus} we shall discuss tori at length. For our initial construction, we will start by focusing on somewhat special tori, which we call {\em canonical}.

\begin{definition}\label{de:canontorus}
Let $G=G_{n}$ be an untwisted classical group. Write $\Delta(a_{i})$ for the diagonal matrix of entries $a_{i}$. The canonical maximal torus $T_{\max}$ and its Lie algebra $\mathfrak{t}_{\max}$ are given as follows.
\begin{enumerate}
\item If $G=\mathrm{SL}_{n}$, then
\begin{align*}
T_{\max} & =\left\{\begin{pmatrix} x_{11} & 0 \\ 0 & x_{22} \end{pmatrix}\in\mathbb{A}^{4n^{2}}:x_{11}=x_{22}^{-1}=\Delta(a_{i}),\prod_{i}a_{i}=1\right\}, \\
\mathfrak{t}_{\max} & =\left\{\begin{pmatrix} x_{11} & 0 \\ 0 & x_{22} \end{pmatrix}\in\mathbb{A}^{4n^{2}}:x_{11}=-x_{22}=\Delta(a_{i}),\sum_{i}a_{i}=0\right\}.
\end{align*}
\item If $G=\mathrm{SO}_{2n}^{+}$ or $G=\mathrm{Sp}_{2n}$, then
\begin{align*}
T_{\max} & =\left\{\begin{pmatrix} x_{11} & 0 \\ 0 & x_{22} \end{pmatrix}\in\mathbb{A}^{4n^{2}}:x_{11}=x_{22}^{-1}=\Delta(a_{i})\right\}, \\
\mathfrak{t}_{\max} & =\left\{\begin{pmatrix} x_{11} & 0 \\ 0 & x_{22} \end{pmatrix}\in\mathbb{A}^{4n^{2}}:x_{11}=-x_{22}=\Delta(a_{i})\right\}.
\end{align*}
\item If $G=\mathrm{SO}_{2n+1}$, then $T_{\max}$ (respectively $\mathfrak{t}_{\max}$) is as in the $\mathrm{SO}_{2n}^{+}$ case with one more $1$ (respectively one more $0$) in the last entry of the diagonal.
\end{enumerate}
A non-maximal torus $T$ of $G$ is canonical if its Lie algebra $\mathfrak{t}$ is the linear subspace of $\mathfrak{t}_{\max}$ given by $\sum_{i}\eta_{i}a_{i}=0$, for some $\eta_{i}\in\mathbb{Z}$ such that $\eta_{n}\neq 0$.
\end{definition}

In each $G$, the canonical torus $T_{\max}$ is contained in a variety $L$ of degree $1$ and dimension at most $N$. Specifically we can choose the variety of diagonal matrices:
\begin{equation}\label{eq:canoncontain}
L=\{(x_{ij})_{i,j}\in\mathrm{Mat}_{N}\,:\,\text{$x_{ij}=0$ when $i\neq j$}\}.
\end{equation}
Similarly, in each case $\mathfrak{t}_{\max}$ is the diagonal Cartan subalgebra of $\mathfrak{g}$.

\subsection{Existence of regular semisimple elements in $G(\mathbb{F}_{q})$}

For an untwisted classical group $G$ defined over $\mathbb{F}_{q}$, the set of regular semisimple elements is open and dense in $G$, and in fact in every maximal torus $T$; this holds more generally for any connected semisimple $G$ over any field \cite[\S\S 2.3--2.5]{Hum95a}. What is more, there exists a regular semisimple element over $\mathbb{F}_{q}$ itself.

\begin{proposition}\label{pr:rsq}
Let $G$ be an untwisted classical group defined over $\mathbb{F}_{q}$. Then $G(\mathbb{F}_{q})$ contains a regular semisimple element.
\end{proposition}

\begin{proof}
By \cite[Prop.~7.1.4 and Rem.~7.1.5]{DOR10}, the result holds for any $G$ reductive and $q\neq 2$, and if $G$ is an untwisted classical group there are only finitely many exceptions to check for $q=2$.

Alternatively, $G$ has at least a regular semisimple class invariant under the Frobenius automorphism $x\mapsto x^{q}$ (in fact, there is an odd number of such classes for any $G$ simply connected and semisimple, see \cite[Cor.~3.5]{Leh92}). Then, every such class has a regular semisimple element defined over $\mathbb{F}_{q}$ (true for every connected $G$, see \cite[2.7(a)]{SS70}).
\end{proof}

\subsection{$\mathbb{F}_{q}$-points on varieties}

For technical reasons, we are going to need the existence of an $\mathbb{F}_{q}$-point outside a proper subvariety. 
Knowing that we are dealing with a proper subvariety
is not always enough: we may have $V(\mathbb{F}_{q})=W(\mathbb{F}_{q})$ even if $V\subsetneq W$.

We need the following simple and explicit upper bound.
\begin{proposition}\label{pr:lwnotq}\cite[Prop.~2.3]{LR15}
Let $V\subseteq\mathbb{A}^{m}$ be a variety defined over $\overline{\mathbb{F}_{q}}$, of dimension $d$ and degree $D$. Then $|V(\overline{\mathbb{F}_{q}})\cap\mathbb{A}^{m}(\mathbb{F}_{q})|\leq Dq^{d}$.
\end{proposition}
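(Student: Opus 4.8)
The plan is to induct on the dimension $d$ of $V$, reducing to the case of a hypersurface by a generic linear projection argument. The statement is clearly additive over irreducible components (both $|V(\mathbb{F}_q)|$ and $Dq^d$ add up, with the bound $q^d$ replaced by $q^{\dim V_i}\le q^d$ on each component), so we may assume $V$ is irreducible of dimension $d$. If $d=0$, then $V$ is a finite set of at most $D$ points, and the bound is immediate. If $d=m$, then $V=\mathbb{A}^m$, $D$ is conventionally $0$ but the honest bound $|V(\mathbb{F}_q)|=q^m$ still satisfies $|V(\mathbb{F}_q)|\le 1\cdot q^m$; more usefully we never need this degenerate case since a proper subvariety of $\mathbb{A}^m$ has $d<m$.

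For the inductive step, assume $0<d<m$. Pick a coordinate projection $\pi\colon\mathbb{A}^m\to\mathbb{A}^{m-1}$ (dropping one coordinate, say $x_m$) generic enough that $\pi|_V$ has finite fibres over a dense open subset of $\overline{\pi(V)}$; after a generic linear change of coordinates defined over $\mathbb{F}_q$ — which does not affect $|V(\mathbb{F}_q)|$ — such a projection exists because $\dim V = d \le m-1$. Then $W:=\overline{\pi(V)}$ is a variety of dimension $d$ in $\mathbb{A}^{m-1}$ with $\deg(W)\le\deg(V)=D$ by Lemma~\ref{le:zarimdeg} (projections have degree $1$). Now decompose the points of $V(\mathbb{F}_q)$ according to their image under $\pi$: a point $w\in W(\mathbb{F}_q)$ either has finite fibre $\pi^{-1}(w)\cap V$, in which case the number of $\mathbb{F}_q$-points above it is bounded by the number of roots of the (nonzero) elimination polynomial cutting out that fibre in the $x_m$-line — at most $D$ of them — or $w$ lies in the exceptional locus $Z\subsetneq W$ where the fibre is positive-dimensional, i.e. all of the $x_m$-line. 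The set $Z$ is a proper subvariety of $W$, hence $\dim Z\le d-1$; and $\pi^{-1}(Z)\cap V$ is a variety of dimension $\le d-1$ whose degree is controlled (again by Bézout and Lemma~\ref{le:zarimdeg}) by a constant times $D$. Applying the induction hypothesis to $\pi^{-1}(Z)\cap V$ handles the exceptional part.

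Putting the pieces together: $|V(\mathbb{F}_q)| \le \sum_{w\in W(\mathbb{F}_q)\setminus Z} (\text{roots above }w) + |(\pi^{-1}(Z)\cap V)(\mathbb{F}_q)|$. The first sum is at most $D\cdot|W(\mathbb{F}_q)|\le D\cdot D q^{d}$ by the inductive hypothesis applied to $W$ — but this loses a factor $D$, which is the crux of the difficulty. To avoid the loss one must be more careful: instead of bounding each fibre by $\deg V$, bound the total, i.e. estimate $\sum_w \#(\pi^{-1}(w)\cap V)(\mathbb{F}_q)$ directly as the number of $\mathbb{F}_q$-points of $V$ lying over the generic-fibre locus, which is at most $\deg V$ times the number of $\mathbb{F}_q$-points of the image $W$ only if one is willing to absorb the degree; the clean way is to induct on $d$ with the fibre dimension as the inductive parameter and use that a generic hyperplane section $V\cap H$ (with $H$ defined over $\mathbb{F}_q$) has dimension $d-1$ and degree $\le D$, so that $|V(\mathbb{F}_q)|$ is controlled by $q$ times $|(V\cap H)(\mathbb{F}_q)|$ plus the points on $V$ lying in the (few) "bad" hyperplanes. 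The honest slogan is: sweep $V$ by a pencil of $\mathbb{F}_q$-hyperplanes, each slice drops dimension by one and keeps degree $\le D$, there are $q+1$ members of the pencil covering $\mathbb{A}^m(\mathbb{F}_q)$ but with overlaps along a codimension-two linear space, and an inclusion–exclusion bookkeeping gives $|V(\mathbb{F}_q)|\le D q^d$ exactly, with no accumulated constant.

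The main obstacle, then, is the bookkeeping that prevents the degree from being raised to a power of $d$: a naive fibrewise argument gives only $|V(\mathbb{F}_q)|\le D q^{d}\cdot(\text{something growing in }D)$, whereas the asserted bound is linear in $D$. I expect the actual proof to get this optimal constant either by the careful pencil-of-hyperplanes induction sketched above, or by invoking a standard "affine Bézout / Lang–Weil-type" point-counting lemma; the dimension and degree tracking in the reduction (ensuring $\deg(V\cap H)\le D$ at every step, and that the bad locus truly has dimension $\le d-1$) is the only place real care is needed.
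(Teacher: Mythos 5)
Your reduction to the irreducible case is correct and is exactly what the paper does, but the core of your argument --- the point count for an irreducible $V$ --- is left with a genuine gap at precisely the step you yourself flag as the crux. Your first argument (project to $\mathbb{A}^{m-1}$, bound each finite fibre by $D$, apply induction to the image) provably yields only $D^{2}q^{d}$ per step and hence $D^{d+1}q^{d}$ overall, as you admit. Your proposed repair is a ``pencil of hyperplanes'' argument described only as a slogan: you assert that ``an inclusion--exclusion bookkeeping gives $|V(\mathbb{F}_q)|\leq Dq^{d}$ exactly'' without carrying it out, and the version you describe (a projective pencil of $q+1$ hyperplanes through a common codimension-two axis, with overlaps to be handled by inclusion--exclusion) is the harder variant to make work. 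Since the entire content of the proposition is that the bound is \emph{linear} in $D$, leaving this step as an expectation means the proposition is not proved.

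For the record, the gap is closable along the lines you gesture at, and more simply than you suggest: for $V$ irreducible with $0<d$, choose a coordinate $x_m$ such that $V$ is not contained in any level set $\{x_m=\alpha\}$ (if it is, either $\alpha\notin\mathbb{F}_q$ and $V(\mathbb{F}_q)=\emptyset$, or one descends to $\mathbb{A}^{m-1}$). The $q$ \emph{parallel} affine hyperplanes $H_t=\{x_m=t\}$, $t\in\mathbb{F}_q$, partition $\mathbb{A}^m(\mathbb{F}_q)$ with no overlaps, so no inclusion--exclusion is needed; each slice $V\cap H_t$ has dimension $\leq d-1$ (by irreducibility of $V$) and degree $\leq D$ (by B\'ezout), and induction on $d$ gives $|V(\mathbb{F}_q)|\leq q\cdot Dq^{d-1}=Dq^{d}$ with no accumulated constant. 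The paper itself does none of this: it simply cites Lachaud--Rolland~\cite[Prop.~2.3]{LR15} for the irreducible case and then sums over components exactly as you do, using $\sum_i\deg(V_i)=D$ and $\max_i\dim(V_i)=d$. So your decomposition step matches the paper; what is missing is either a complete execution of the slicing induction or an explicit appeal to such a reference.
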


We now apply this proposition. The constraints in the Main Theorem on the size of $q$ with respect to $r$ originate here. 

\begin{corollary}\label{co:langweil}
Let $G<\mathrm{GL}_{N}$ be an untwisted classical group of rank $r$, with $\ell=\frac{\dim(G)}{r}$. Let $V=\{x\in\mathbb{A}^{N^{2}m}:F(x)=0\}$ for some $m\geq 1$ and some polynomial $F$ defined over $\overline{\mathbb{F}_{q}}$, where $\mathrm{char}(\mathbb{F}_{q})>2$. Assume that $V\cap G^{m}\subsetneq G^{m}$.
\begin{enumerate}[(a)]
\item\label{co:langweilariadne} If $q\geq e^{8r\log(2r)}$, $m=\ell$, and $\deg(F)\leq 4(\ell-1)\dim(G)$, then $G^{\ell}(\mathbb{F}_{q})\setminus V(\overline{\mathbb{F}_{q}})\neq\emptyset$.
\item\label{co:langweilstick} If $r\geq 2$, $q\geq e^{3r\log(2r)}$, $m=\ell$, and $\deg(F)\leq 2\ell$, then $G^{\ell}(\mathbb{F}_{q})\setminus V(\overline{\mathbb{F}_{q}})\neq\emptyset$.
\item\label{co:langweilvgv} If $q\geq 15r^{2}$, $m=1$, and $\deg(F)\leq 2$, then $G(\mathbb{F}_{q})\setminus V(\overline{\mathbb{F}_{q}})\neq\emptyset$.
\end{enumerate}
\end{corollary}

\begin{proof}
We follow two routes, according to whether $G=\mathrm{SL}_{n}$ or $G\neq\mathrm{SL}_{n}$. 

First, let $G=\mathrm{SL}_{n}$: we may deal with this case directly via Proposition~\ref{pr:lwnotq}, but since $\deg(G)$ is large we follow another route. Denote by $\pi$ the projection to the upper left corners of the $m$ copies of $G$: in other words, if $x_{ijk}$ is the $(i,j)$-th entry of the matrix in the $k$-th copy of $G$, $\pi$ is the map forgetting every $x_{ijk}$ with either $i>n$ or $j>n$. Let $\tilde{G}=\pi(G)$, i.e.\ the more usual representation of $\mathrm{SL}_{n}$. Then, let $\tilde{F}$ be the polynomial defined taking $F$, setting $x_{ijk}=0$ inside it for all variables having either $i\leq n<j$ or $j\leq n<i$, and replacing all $x_{ijk}$ having $i,j>n$ with the corresponding $(n-1)\times(n-1)$ minor from the upper left corner: clearly for any $x\in G$ we have $F(x)=0$ if and only if $\tilde{F}(\pi(x))=0$. Thus, we can work inside $\tilde{G}$ and retrieve the result for $G$.

Call $d=\dim(G)=\dim(\tilde{G})$, and note that we have $\deg(\tilde{G})\leq n$. By \eqref{eq:numelgr}, $|G^{m}(\mathbb{F}_{q})|=|\tilde{G}^{m}(\mathbb{F}_{q})|>(q-1)^{dm}>q^{dm}\left(1-\frac{dm}{q}\right)$. If $\tilde{V}=\{x\in\mathbb{A}^{n^{2}m}:\tilde{F}(x)=0\}$, we also have $\dim(\tilde{V}\cap\tilde{G}^{m})\leq dm-1$ by the irreducibility of $G$, and $\deg(\tilde{V}\cap \tilde{G}^{m})\leq\deg(\tilde{F})n^{m}$ by B\'ezout. Finally, we have $\deg(\tilde{F})\leq(n-1)\deg(F)$ by construction. Thus, using Proposition~\ref{pr:lwnotq} we obtain
\begin{equation*}
|G^{m}(\mathbb{F}_{q})\setminus(V(\overline{\mathbb{F}_{q}})\cap\mathbb{A}^{N^{2}m}(\mathbb{F}_{q}))|>q^{dm}\left(1-\frac{dm+(n-1)\deg(F)n^{m}}{q}\right).
\end{equation*}
By Table~\ref{ta:basicg} and our choices of $r,q,m,F$ in the various cases, the bound above is non-negative.

Now let $G\neq\mathrm{SL}_{n}$. In this case we shall obtain a better bound by working in the Lie algebra $\mathfrak{g}$, which is a linear variety. If $G<\mathrm{GL}_{N}$, let $\lambda$ be the birational map $\lambda(x)=(\mathrm{Id}_{N}-x)(\mathrm{Id}_{N}+x)^{-1}$: $\lambda$ is well-defined in $\mathfrak{g}\setminus Z$, where $Z=\{x\in\mathbb{A}^{N^{2}}:\det(\mathrm{Id}_{N}+x)=0\}$, and by \cite{LPR06} we know that $\lambda:\mathfrak{g}\setminus Z\rightarrow G\setminus Z$ is an isomorphism between dense open sets of $\mathfrak{g}$ and $G$ (more than that, $\lambda$ is a {\em Cayley map} for $G$, as shown by Weil and referenced in \cite[Ex.~1.16]{LPR06}; it is known by~\cite[Thm.~1.31]{LPR06} that $\mathrm{SL}_{n}$ does not have a Cayley map for $n>3$, whence the difference of treatments here).

Let
\begin{equation*}
X=\{(x,y)\in\mathbb{A}^{N^{2}+1}\,:\,x\in\mathfrak{g},\,\det(\mathrm{Id}_{N}+x)y=1\},
\end{equation*}
and let $\pi:\mathbb{A}^{N^{2}+1}\rightarrow\mathbb{A}^{N^{2}}$ be the projection $\pi(x,y)=x$: we have $\deg(X)\leq N+1$ since $\mathfrak{g}$ is linear, and $X$ is isomorphic to $\mathfrak{g}\setminus Z$ via $\pi$, whose inverse is the rational map $x\mapsto(x,\det(\mathrm{Id}_{N}+x)^{-1})$. Recall that the {\em adjugate} of $x\in\mathrm{Mat}_{N}$ is the matrix $\mathrm{adj}(x)$ whose $(i,j)$-th entry is $(-1)^{i+j}M_{ji}$, where $M_{ji}$ is the $(j,i)$-th minor of $x$: in particular, $\mathrm{adj}$ is a polynomial map with $\mdeg(\mathrm{adj})\leq N-1$, and $\mathrm{adj}(x)=\det(x)x^{-1}$ whenever $x$ is invertible. Let $f:\mathbb{A}^{N^{2}+1}\rightarrow\mathbb{A}^{N^{2}}$ be defined by $f(x,y)=y(\mathrm{Id}_{N}-x)\mathrm{adj}(\mathrm{Id}_{N}+x)$: $f$ is a polynomial map with $\mdeg(f)\leq N+1$ and, since $f|_{X}=(\lambda\circ\pi)|_{X}$, $X$ is isomorphic to $G\setminus Z$ via $f$. Thus, $f$ is a birational morphism to $G$ irreducible, which means that $X$ itself is irreducible. Write $f^{m}$ for the map made of $m$ copies of $f$. Let $\tilde{V}=(f^{m})^{-1}(V)$, namely
\begin{equation*}
\tilde{V}=\{(x_{1},\ldots,x_{m},y_{1},\ldots,y_{m})\in\mathbb{A}^{(N^{2}+1)m}:F(f(x_{1},y_{1}),\ldots,f(x_{m},y_{m}))=0\},
\end{equation*}
which is of degree $\leq\mdeg(f)\deg(F)$. Since $X$ is irreducible, $X^{m}\cap\tilde{V}$ is a proper subvariety of $X^{m}$ of positive codimension and of degree $\leq(N+1)^{m+1}\deg(F)$.

It suffices to find an element in $X^{m}(\mathbb{F}_{q})\setminus(\tilde{V}(\overline{\mathbb{F}_{q}})\cap\mathbb{A}^{(N^{2}+1)m}(\mathbb{F}_{q}))$: if there is one such $x$, then $f(x)$ is an element of $G^{m}(\mathbb{F}_{q})\setminus(V(\overline{\mathbb{F}_{q}})\cap\mathbb{A}^{N^{2}m}(\mathbb{F}_{q}))$ because $f$ is defined over $\mathbb{F}_{q}$. The projection $\pi$ induces a bijection from $X(\mathbb{F}_{q})$ to $\mathfrak{g}(\mathbb{F}_{q})\setminus Z(\mathbb{F}_{q})$, since it preserves $\mathbb{F}_{q}$-points: hence, calling $d=\dim(G)$,
\begin{equation*}
|X(\mathbb{F}_{q})|=|\mathfrak{g}(\mathbb{F}_{q})|-|(\mathfrak{g}\cap Z)(\mathbb{F}_{q})|\geq q^{d}-q^{d-1}N,
\end{equation*}
using $|\mathfrak{g}(\mathbb{F}_{q})|=q^{d}$, B\'ezout, and Proposition~\ref{pr:lwnotq}. Thus, applying Proposition~\ref{pr:lwnotq} again and noting that $\dim(X)=d$, we obtain
\begin{align*}
|X^{m}(\mathbb{F}_{q})\setminus(\tilde{V}(\overline{\mathbb{F}_{q}})\cap\mathbb{A}^{(N^{2}+1)m}(\mathbb{F}_{q}))| & \geq q^{dm}\left(1-\frac{N}{q}\right)^{m}-q^{dm-1}(N+1)^{m+1}\deg(F) \\
 & >q^{dm}\left(1-\frac{Nm+(N+1)^{m+1}\deg(F)}{q}\right).
\end{align*}
By Table~\ref{ta:basicg} and our choices of $r,q,m,F$, the bound above is non-negative.
\end{proof}

Computations like the ones above can be performed whenever we want to show that there is some $\mathbb{F}_{q}$-point inside $W\setminus V$ for some given $V,W$ with $\dim(V)<\dim(W)$. Since in our case $W=G$, by Section~\ref{se:chev} we know exactly how many $\mathbb{F}_{q}$-points it has: for general $W$, one would need also a lower bound in addition to Proposition~\ref{pr:lwnotq}. To achieve this purpose, we can use a variant of the Lang-Weil bounds~\cite{LW54} -- say, by combining the explicit version contained in~\cite[Thm.~4.1]{GL02} with the definability result of~\cite[Prop.~3]{Hei83}.


\section{Escape from subvarieties}\label{se:escape}

In this section, we give results that allow us to find generic elements of $G(K)$ in $k$ steps, where $K$ is any field and $k$ is appropriately bounded. Such a result appears in a similar form already in~\cite{EMO05}; the idea itself, however, is older than that. In particular, a qualitative proof of Corollary~\ref{co:escape} can be found in~\cite{Hel11}; our main goal is to stress the quantitative details in order to optimize $k$.

\subsection{Escape via $G$-invariance}

Let us start with an intermediate result. The following lemma lets us keep under control a weighted version of the degree of variety when taking intersections.

\begin{lemma}\label{le:escinters}
Let $\{Z_{i}\}_{i\in I}$ be a (not necessarily finite) collection of subvarieties of an affine space $\mathbb{A}^{m}$, all of dimension $\leq d$ and degree $\leq D$. Then, if $\{W_{j}\}_{j}$ is the collection of irreducible components of $Z_{I}=\bigcap_{i\in I}Z_{i}$, we have
\begin{equation}\label{eq:escinters}
\sum_{j}\frac{\deg(W_{j})}{D^{d-\dim(W_{j})}}\leq D.
\end{equation}
\end{lemma}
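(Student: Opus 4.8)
The plan is to prove \eqref{eq:escinters} by induction on the (finite) number of varieties actually needed to cut out the intersection, using B\'ezout at each step and tracking how the weighted sum evolves. First I would reduce to the case where $I$ is finite: the descending chain of closed sets $Z_{i_1}\supseteq Z_{i_1}\cap Z_{i_2}\supseteq\cdots$ stabilizes by Noetherianity, so $Z_I=Z_{i_1}\cap\cdots\cap Z_{i_k}$ for some finite subcollection; it therefore suffices to prove the statement for a finite intersection $Z_1\cap\cdots\cap Z_k$, and in fact by a further induction it suffices to treat the single-step update $Z\mapsto Z\cap Z_{k+1}$, where $Z$ is an already-formed intersection satisfying the analogue of \eqref{eq:escinters} and $Z_{k+1}$ has dimension $\le d$ and degree $\le D$.

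The heart of the argument is the inductive step. Suppose $Z$ has irreducible components $\{U_a\}_a$ with $\sum_a \deg(U_a)/D^{d-\dim(U_a)}\le D$. Fix one component $U_a$ and intersect it with $Z_{k+1}$. Each irreducible component $W$ of $U_a\cap Z_{k+1}$ either equals $U_a$ (if $U_a\subseteq Z_{k+1}$, in which case $\dim(W)=\dim(U_a)$ and $\deg(W)=\deg(U_a)$ and nothing changes) or has $\dim(W)\le\dim(U_a)-1$, and in the latter situation B\'ezout gives $\sum_{W}\deg(W)\le\deg(U_a)\deg(Z_{k+1})\le D\deg(U_a)$, where the sum is over those components $W$ of $U_a\cap Z_{k+1}$ of maximal dimension; more generally, applying B\'ezout dimension-by-dimension (or simply using $\deg(U_a\cap Z_{k+1})\le\deg(U_a)\deg(Z_{k+1})$ for the sum of degrees of all pure-dimensional parts) we get $\sum_{W\subseteq U_a\cap Z_{k+1}}\deg(W)\le D\deg(U_a)$ with the sum ranging over all irreducible components $W$. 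Now for each such $W$ we have $\dim(W)\le\dim(U_a)$, hence $D^{d-\dim(W)}\ge D^{d-\dim(U_a)}\cdot D^{\dim(U_a)-\dim(W)}$, and I claim
\begin{equation*}
\sum_{W\subseteq U_a\cap Z_{k+1}}\frac{\deg(W)}{D^{d-\dim(W)}}\le\frac{\deg(U_a)}{D^{d-\dim(U_a)}}.
\end{equation*}
Indeed, splitting the components $W$ by their dimension $e=\dim(W)\le\dim(U_a)$, B\'ezout applied at dimension $e$ (to the pure part of $U_a\cap Z_{k+1}$ of dimension $e$) bounds $\sum_{\dim(W)=e}\deg(W)$; the cleanest packaging is to note that $U_a\cap Z_{k+1}$ is contained in $U_a\cap H$ for a suitable generic choice of $\dim(U_a)-e$ hyperplanes when estimating the dimension-$e$ part, giving $\sum_{\dim(W)=e}\deg(W)\le\deg(U_a)D^{\dim(U_a)-e}$ when $e<\dim(U_a)$ (and the trivial bound $\deg(U_a)$ when $e=\dim(U_a)$, using $D\ge1$), so that $\deg(W)/D^{d-\dim(W)}\le \deg(U_a)D^{\dim(U_a)-e}/(D^{d-\dim(U_a)}D^{\dim(U_a)-e})=\deg(U_a)/D^{d-\dim(U_a)}$ termwise after summing over $W$ of that dimension, and finally summing over the at most one relevant value — actually over all $e\le\dim(U_a)$ this still needs care, so the right move is to use the single B\'ezout bound $\sum_W\deg(W)\le D\deg(U_a)$ together with $D^{d-\dim(W)}\ge D\cdot D^{d-1-\dim(W)}\ge D\cdot D^{d-1-\dim(U_a)}$... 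Let me instead state the clean inequality I will actually verify: for each fixed $U_a$, $\sum_{W}\deg(W)/D^{d-\dim(W)}\le \deg(U_a)/D^{d-\dim(U_a)}$, which I will prove by the hyperplane-section trick dimension by dimension as above, the point being that passing from dimension $\dim(U_a)$ down to dimension $e$ costs a factor $D^{\dim(U_a)-e}$ in degree (B\'ezout with generic linear forms, whose product has degree $\le D$ at each of the $\dim(U_a)-e$ steps — here using $\deg(Z_{k+1})\le D$ only once and linear forms thereafter, or more simply just B\'ezout with $Z_{k+1}$ once plus that the degree can only drop) but gains a factor $D^{\dim(U_a)-e}$ in the denominator, so each dimension-$e$ contribution is bounded by $\deg(U_a)/D^{d-\dim(U_a)}$ and — crucially — the components of distinct dimensions are disjoint, so no overcounting; I then observe that in fact at most the top surviving dimension contributes the full amount and lower ones strictly less, but even the crude bound suffices if I am slightly more careful, namely summing a geometric-type series $\sum_{e}D^{\dim(U_a)-e}/D^{\dim(U_a)-e}$ would be bad, so the correct and honest statement is the dimension-$e$ B\'ezout bound $\sum_{\dim(W)=e}\deg(W)\le\deg(U_a)\deg(Z_{k+1})\le D\deg(U_a)$ holds only for the \emph{top} dimension of $U_a\cap Z_{k+1}$; for strictly lower-dimensional components one uses that they lie in the intersection with additional generic hyperplanes and the same bound $\le\deg(U_a)\cdot(\text{product of linear degrees})\cdot\deg(Z_{k+1})$, but a single application of B\'ezout to the ambient $U_a\cap Z_{k+1}$ already yields $\sum_{\text{all }W}\deg(W)\le D\deg(U_a)$, and then since every $W$ has $\dim(W)\le\dim(U_a)$ we conclude
\begin{equation*}
\sum_{W\subseteq U_a\cap Z_{k+1}}\frac{\deg(W)}{D^{d-\dim(W)}}\le\frac{1}{D^{d-\dim(U_a)}}\sum_{W}\deg(W)\le\frac{D\deg(U_a)}{D^{d-\dim(U_a)}},
\end{equation*}
which is off by a factor of $D$.

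So the genuine obstacle — and the place I expect to spend the most effort — is squeezing out that spurious factor of $D$: I must exploit the dimension \emph{drop}. The resolution is to do B\'ezout at each dimension level separately. For the top-dimensional part $P_e$ of $U_a\cap Z_{k+1}$ (say of dimension $e\le\dim(U_a)$), B\'ezout gives $\deg(P_e)\le\deg(U_a)\deg(Z_{k+1})$ only when $e=\dim(U_a)-1$ or the intersection is proper; in general one intersects $U_a$ with generic hyperplanes until the ambient copy of $Z_{k+1}$ cuts properly, and the bound becomes $\deg(P_e)\le\deg(U_a)\cdot D\cdot D^{\dim(U_a)-1-e}$ when $e\le\dim(U_a)-1$, i.e. $\deg(P_e)\le\deg(U_a)D^{\dim(U_a)-e}$, while for $e=\dim(U_a)$ (the case $U_a\subseteq Z_{k+1}$) there is a single component, equal to $U_a$. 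In every case $\deg(P_e)/D^{d-e}\le\deg(U_a)D^{\dim(U_a)-e}/D^{d-e}=\deg(U_a)/D^{d-\dim(U_a)}$; and since the possible values of $e$ realized by components of $U_a\cap Z_{k+1}$ are distinct integers, but we need these \emph{not} to all be added at full strength — here I invoke that $U_a\cap Z_{k+1}$ is \emph{equidimensional in the relevant sense up to the bound}, or more honestly: I will bound $\sum_{e}\deg(P_e)/D^{d-e}$ by noting $\sum_e \deg(P_e)/D^{d-e}\le\sum_e\deg(U_a)D^{\dim(U_a)-e}/D^{d-e}=\sum_e\deg(U_a)/D^{d-\dim(U_a)}$ which again overcounts; the truly correct bookkeeping, which I will carry out in detail, weights a dimension-$e$ component by $D^{-(d-e)}$ and uses that the \emph{total} degree counted with the weight $D^{-(\dim U_a - e)}$ (i.e. B\'ezout's bound reinterpreted) satisfies $\sum_e \deg(P_e)D^{-(\dim(U_a)-e)}\le\deg(U_a)\deg(Z_{k+1})/D\le\deg(U_a)$ — this is the precise form of "degree drops compensate" — whence $\sum_{W\subseteq U_a\cap Z_{k+1}}\deg(W)/D^{d-\dim(W)}=D^{-(d-\dim(U_a))}\sum_e\deg(P_e)D^{-(\dim(U_a)-e)}\le\deg(U_a)/D^{d-\dim(U_a)}$. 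Summing this over all components $U_a$ of $Z$ gives $\sum_W\deg(W)/D^{d-\dim(W)}\le\sum_a\deg(U_a)/D^{d-\dim(U_a)}\le D$ by the inductive hypothesis, completing the step and hence the proof. The main obstacle, to restate it plainly, is proving the one-component inequality $\sum_{W\subseteq U_a\cap Z_{k+1}}\deg(W)/D^{d-\dim(W)}\le\deg(U_a)/D^{d-\dim(U_a)}$ with the \emph{correct} constant $1$ rather than $D$; everything else (reduction to finite $I$, the outer induction, the base case $Z=\mathbb{A}^m$ or $Z=Z_{i_1}$) is routine.
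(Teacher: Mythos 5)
Your final argument is correct and is essentially the paper's own proof: reduce to finite $I$ by Noetherianity, induct on adding one variety at a time, and for each component $U_a$ of the current intersection not contained in the new variety $Z_{k+1}$, use that every component of $U_a\cap Z_{k+1}$ drops in dimension by at least one, so the extra factor $D$ gained in the denominator of the weight compensates the B\'ezout factor $\deg(Z_{k+1})\le D$. The lengthy false starts (the recurring ``off by a factor of $D$'' worry and the hyperplane-section digressions) are unnecessary and should be cut, since the single clean chain $\sum_{W}\deg(W)/D^{d-\dim(W)}\le D^{-(d-\dim(U_a)+1)}\sum_{W}\deg(W)\le\deg(U_a)/D^{d-\dim(U_a)}$ is all that is needed.
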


\begin{proof}
Notice that, since $\mathbb{A}^{m}$ is Noetherian under the Zariski topology, any $Z_{I}$ is equal to some $Z_{I'}$ with $I'\subseteq I$ finite. We can assume then that $|I|<\infty$, and we proceed by induction on $|I|$; the case $|I|=1$ is trivial, since $\deg(Z_{i})\leq D$ is the sum of the degrees of its components.

Assume then that we have proved the statement for all $|I|=k$, and consider $I'=I\cup\{i\}$; for simplicity, let us call $X=Z_{I}$ and $Y=Z_{i}$, so that $Z_{I'}=X\cap Y$. Call $X_{1}$ the union of all the irreducible components of $X$ that appear in $X\cap Y$, and $X_{2}$ the union of all the ones that do not: hence, every component $W$ of $X\cap Y$ is either a component of $X_{1}$ or a component of $W'\cap Y$, where in turn $W'$ is a component of $X_{2}$.

If $W_{j}$ is in $X_{2}$, we have $W_{j}=W_{j}\cap Y$, and its contribution to~\eqref{eq:escinters} for $X\cap Y$ is unchanged. If $W_{j}$ is in $X_{1}$, the components of $W_{j}\cap Y$ are of dimension at most $\dim(W_{j})-1$, and the sum of their degrees is at most $D\deg(W_{j})$; thus, their contribution to~\eqref{eq:escinters} for $X\cap Y$ does not increase. Therefore, \eqref{eq:escinters} is proved for $X\cap Y$ as well, concluding the inductive step.
\end{proof}

Thanks to the lemma above, starting with a variety $V$ we are going to be able to find a $G$-invariant intersection of varieties $g\cdot V$, where $\cdot: G\times\mathbb{A}^{N}\rightarrow\mathbb{A}^{N}$ is some linear action. Note that $\bigcap_{g\in G}g\cdot V$ is trivially $G$-invariant: what we want is to find a small set of $g$ that can play the same role and such that these elements $g$ are quickly obtainable from a generating set.

\begin{proposition}\label{pr:strong-escape}
Let $K$ be a field, and let $G$ be a group acting linearly on $\mathbb{A}^{N}$ over $K$ (equivalently, a group with a homomorphism $\phi:G\rightarrow\mathrm{GL}_{N}(K)$); let $A\subseteq G$ be a set with $e\in A$ and $\langle A\rangle=G$. Consider a proper subvariety $V\subsetneq\mathbb{A}^{N}$ with $d=\dim(V)$ and $D=\deg(V)$.

Then, there are elements $e=a_{0},a_{1},\ldots,a_{m}\in A^{k}$ such that $\bigcap_{i=0}^{m}a_i\cdot V$ is $G$-invariant, with 
\begin{equation}\label{eq:mk}
m, k=\sum_{d'=0}^{d}D^{d-d'+1}\leq\begin{cases}
\left(1+\frac{1}{D-1}\right)D^{d+1} & \text{if } D\geq 2, \\
d+1 & \text{if } D=1. \\
\end{cases}
\end{equation}
\end{proposition}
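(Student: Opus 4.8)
The strategy is a greedy orbit-closure construction driven by Lemma~\ref{le:escinters}. Start with $a_0=e$ and set $Z_0=V$. At stage $s$, we have chosen $a_0,\ldots,a_s\in A^{k_s}$ and formed $Z_s=\bigcap_{i=0}^{s}a_i\cdot V$. If $Z_s$ is already $G$-invariant, we stop and output it. Otherwise there exists $g\in G$ with $g\cdot Z_s\not\supseteq Z_s$, equivalently $g\cdot Z_s\cap Z_s\subsetneq Z_s$ as a variety; since $\langle A\rangle=G$ and $g$ is a word in $A$, we may in fact take $g=a$ a \emph{single} element of $A$, because if $a\cdot Z_s\supseteq Z_s$ for every $a\in A$ then $A\cdot Z_s\supseteq Z_s$, hence (iterating, and using $A=A^{-1}$) $g\cdot Z_s=Z_s$ for all $g\in G$. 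Pick such an $a$, append $a_{s+1}:=a\in A$ (so $a_{s+1}\in A^{k_s+1}$, and then $a_i\in A^{k_{s+1}}$ for all $i\le s+1$ with $k_{s+1}=k_s+1$), and set $Z_{s+1}=a\cdot Z_s\cap a_{s+1}\cdot V$... more precisely $Z_{s+1}=Z_s\cap a_{s+1}\cdot V$, which is a \emph{proper} subvariety of $Z_s$ in the sense that at least one irreducible component either drops in dimension or splits.

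\textbf{The termination count.} The point is to bound how many stages can occur. All the varieties $a_i\cdot V$ have dimension $\le d$ and degree $\le D$ (a linear action, i.e.\ an invertible linear change of coordinates, preserves both dimension and degree). Hence at every stage $Z_s=\bigcap_{i\le s}a_i\cdot V$ is an intersection of varieties of dimension $\le d$ and degree $\le D$, so Lemma~\ref{le:escinters} applies and gives $\sum_{j}\deg(W_j)/D^{d-\dim(W_j)}\le D$ for the irreducible components $W_j$ of $Z_s$. Assign to $Z_s$ the weight $\Phi(Z_s)=\sum_j \deg(W_j)/D^{d-\dim(W_j)}$, a positive rational bounded above by $D$. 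Every time we perform a non-trivial step, intersecting with a new $a_{s+1}\cdot V$ that does not contain $Z_s$, the weight $\Phi$ strictly decreases: a component $W_j$ not containing... rather, a component of $Z_s$ either survives unchanged (if it lies inside $a_{s+1}\cdot V$) or is replaced by components of $W_j\cap (a_{s+1}\cdot V)$ of dimension $\le \dim(W_j)-1$ whose total contribution, being at most $\deg(W_j)\cdot D / D^{d-(\dim(W_j)-1)} = \deg(W_j)/D^{d-\dim(W_j)}$ wait — one must check this does not merely stay equal; since at least one component genuinely drops or splits with a net strict decrease in the weighted sum because passing from $D^{d-\dim(W_j)}$ to $D^{d-\dim(W_j)+1}$ in the denominator strictly helps unless $\deg$ jumps by the full factor $D$, and even then the count of surviving top-dimensional pieces drops. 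So $\Phi$ takes values in a set of the form $\{\,\sum_{d'=0}^{d} c_{d'}/D^{d-d'} : c_{d'}\in\Z_{\ge 0},\ \sum c_{d'}/D^{d-d'}\le D\,\}$, and the number of strict decreases before reaching $0$ (or stabilising) is at most the number of ``units'' of size $D^{-d}$ available at each level, summed over levels. A component of dimension $d'$ contributes a multiple of $D^{-(d-d')}$, i.e.\ $D^{d-d'}$ units of size $D^{-d}$; the total budget at level $d'$ is $D$, giving $D\cdot D^{d-d'}=D^{d-d'+1}$ units, and summing over $d'=0,\ldots,d$ yields at most $\sum_{d'=0}^{d} D^{d-d'+1}$ strict steps. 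This is exactly the claimed bound on $m$; and since each step increases $k_s$ by $1$ starting from $k_0=0$, we get the same bound on $k$. The closed-form estimates for $D\ge 2$ and $D=1$ are then elementary geometric-series manipulations.

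\textbf{Main obstacle.} The delicate point is the accounting in the previous paragraph: showing that each non-trivial intersection step strictly decreases the integer ``potential'' and that the total potential is bounded by $\sum_{d'=0}^{d}D^{d-d'+1}$, rather than by the cruder $\prod$-type bound one gets from naive B\'ezout. The key is that Lemma~\ref{le:escinters} gives a \emph{dimension-weighted} degree bound that is invariant under the whole process (every $Z_s$ is still an intersection of the same family of varieties), so the weight can only go down; one then argues that the smallest possible positive decrement at level $d'$ is $D^{-(d-d')}$ and the initial value at that level is $\le D$. I would formalise this by defining the integer quantity $\mu(Z_s)=\sum_{d'=0}^{d} D^{d-d'}\cdot c_{d'}(Z_s)$ where $c_{d'}(Z_s)$ is the sum of degrees of the $d'$-dimensional components of $Z_s$, noting $\mu(Z_s)\le \sum_{d'=0}^d D^{d-d'}\cdot D\cdot D^{d-d'}$... no: more simply, by Lemma~\ref{le:escinters}, $\sum_{d'} c_{d'}(Z_s)/D^{d-d'}\le D$, so $\mu(Z_s)=D^d\sum_{d'}c_{d'}(Z_s)/D^{d-d'}\le D^{d+1}$ — but that is not quite the stated bound, so one must instead bound $\mu$ level by level ($c_{d'}\le D\cdot D^{d-d'}$ from the lemma restricted to level $d'$, since the $d'$-term alone is $\le D$), giving $\mu(Z_0)\le \sum_{d'} D^{d-d'}\cdot D\cdot D^{d-d'}$ — still too big, so the correct bookkeeping counts only the number of \emph{steps}, each removing at least one unit of size $D^{-(d-d')}$ at some level $d'$ whose reservoir holds $D^{d-d'+1}$ such units, for a grand total of $\sum_{d'=0}^{d}D^{d-d'+1}$ steps. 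Getting this counting airtight — in particular handling the case where a high-dimensional component splits into many lower-dimensional ones in one step — is the crux; everything else (the linear-action invariance of degree and dimension, the reduction to a single generator $a\in A$, the word-length tracking $a_i\in A^k$, and the final geometric sums) is routine.
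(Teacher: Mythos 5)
Your overall strategy is the same as the paper's: greedily intersect with translates of $V$, use Lemma~\ref{le:escinters} to control the process, and track word length by noting that each new translate is $aa_i\cdot V$ for some $a\in A$ and some previously used $a_i$. Two points, however, need attention, and the second is a genuine gap that you yourself flag as unresolved.

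First, a small but real slip: from $Z_s\cap a\cdot Z_s\subsetneq Z_s$ you may only conclude that $Z_s\cap aa_i\cdot V\subsetneq Z_s$ for \emph{some} $i\leq s$, not for $i=0$; so the translate you append must be $a_{s+1}=aa_i$ (which lies in $A^{k_s+1}$, as your bookkeeping anticipates), not $a_{s+1}=a$. As written, $Z_s\cap a\cdot V$ need not be proper in $Z_s$.

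Second, and more seriously, the termination count is not airtight. Your potential $\Phi$ need not strictly decrease (you concede this: a degree-$\delta$ component of dimension $d'$ can be replaced by total degree $D\delta$ one dimension lower, leaving its contribution unchanged), and your fallback — "the reservoir at level $d'$ holds $D^{d-d'+1}$ units" — conflates two different quantities. Lemma~\ref{le:escinters} bounds the total degree of the $d'$-dimensional components of $Z_s$ \emph{at any single stage} by $D^{d-d'+1}$; it does not bound the total number of $d'$-dimensional components killed \emph{over the whole process}, because cuts at higher levels keep repopulating level $d'$ when the levels are processed in an interleaved order, as your greedy step permits. (The total does in fact come out to $\sum_{d'}D^{d-d'+1}$ even for interleaved processing, but seeing this requires an amortized recursion of the form $f(d')=1+Df(d'-1)$ for the number of kills generated per unit of degree at level $d'$, optimized against the constraint of Lemma~\ref{le:escinters} on the initial configuration — an argument you do not give.) The paper sidesteps this entirely by clearing dimensions in strictly decreasing order: since intersecting never creates new components of the current top dimension, the single-stage bound $D^{d-d_1+1}$ directly caps the number of sub-steps at level $d_1$, and the levels then just add up. The price of that ordering is the extra device you omit: one must split off the union $X_2$ of components that are already $G$-invariant (including $G$-invariant unions of top-dimensional components), because those cannot be removed by any $a\in A$, and only then is one guaranteed to find $a\in A$ cutting a \emph{top-dimensional} component of the remaining part. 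Either supply the amortized count or adopt the paper's dimension-ordered scheme; as it stands, the central quantitative claim of the proposition is asserted rather than proved.
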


With $\langle A\rangle=G$ we mean that $A$ generates $G$ {\em as a group}: in other words, every element $g\in G$ is the product of finitely many elements of $A\cup A^{-1}$, not necessarily of $A$. Nevertheless, the elements $a_{i}$ in the statement sit inside a power of $A$ itself. The condition $e\in A$ is only needed to simplify notation, as it implies that $A^{k_{1}}\supseteq A^{k_{2}}$ whenever $k_{1}\geq k_{2}$.

\begin{proof}
Our aim is to construct a variety $Z=a_{0}\cdot V\cap a_{1}\cdot V\cap\ldots\cap a_{m}\cdot V$, with $a_{i}\in A^{k}$, such that $Z$ is $G$-invariant.

We proceed in steps. At the $j$-th step, we assume to be working with a variety $X=X_{1}\cup X_{2}$ with the following properties: $X=\bigcap_{i=0}^{m_{j-1}}a_{i}\cdot V$ for $a_{i}\in A^{k_{j-1}}$, $X_{1},X_{2}$ are unions of distinct irreducible components of $X$, $\dim(X_{1})\leq d+1-j$, and $X_{2}$ is $G$-invariant; this is trivially the case at the first step, using $X=X_{1}=V$ and $m_{0}=k_{0}=0$. At the end of the $j$-th step, we want to have found a variety $X'$ with the same properties for $j$ instead of $j-1$, with $\dim(X'_{1})\leq\dim(X_{1})-1$ and $m_{j},k_{j}$ bounded in terms of $m_{j-1},k_{j-1}$: since the dimension of $X_{1}$ goes down, we must have $X'_{1}=\emptyset$ at the $(d+1)$-th step at the latest, and $Z$ is the final $X'=X'_{2}$.

Call $d_{1}=\dim(X_{1})$, let $\tilde{X}_{1},\tilde{X}_{2},\ldots,\tilde{X}_{s}$ be the $d_{1}$-dimensional irreducible components of $X_{1}$, and let $S\subseteq\{1,\ldots,s\}$ be the maximal set of indices such that $\bigcup_{s'\in S}\tilde{X}_{s'}$ is $G$-invariant; if there are no indices outside $S$ we skip to the end of the step, otherwise $\tilde{X}=\overline{X_{1}\setminus\bigcup_{s'\in S}\tilde{X}_{s'}}$ has some $d_{1}$-dimensional component. Since $A$ generates $G$ as a group, there must be some $a\in A\cup A^{-1}$ such that $\tilde{X}\cap a\cdot \tilde{X}$ has at least one $d_{1}$-dimensional component fewer than $\tilde{X}$, or else we would have had $S=\{1,\ldots,s\}$. If there is no such $a$ inside $A$ then there is no such $a$ inside $A\cup A^{-1}$ either, thus we may assume more strongly that $a\in A$. Then, observe that $X=\tilde{X}\cup\left(\bigcup_{s'\in S}\tilde{X}_{s'}\right)\cup X_{2}$, and the last two terms are $G$-invariant: if $Y=\left(\bigcup_{s'\in S}\tilde{X}_{s'}\right)\cup X_{2}$ then $Y=a\cdot Y$, and $\tilde{X}\cap Y=\tilde{X}\cap a\cdot Y$ does not have any $d_{1}$-dimensional component, or else it would contradict the definition of $\tilde{X}$. Therefore, $\tilde{X}\cap a\cdot \tilde{X}$ and $\tilde{X}\cap a\cdot X$ have the same number of $d_{1}$-dimensional components. From the above, since $X=\bigcap_{i=0}^{m_{j-1}}a_{i}\cdot V$, for some $i$ we must have that $\tilde{X}\cap aa_{i}\cdot V$ has at least one $d_{1}$-dimensional component fewer than $\tilde{X}$. Note on the other hand that, since $\left(\bigcup_{s'\in S}\tilde{X}_{s'}\right)\cup X_{2}$ is $G$-invariant and contained in $a_{i}\cdot V$, it is also contained in $aa_{i}\cdot V$. Hence, we can write $X\cap aa_{i}\cdot V=(\tilde{X}\cap aa_{i}\cdot V)\cup\left(\bigcup_{s'\in S}\tilde{X}_{s'}\right)\cup X_{2}$, with $\tilde{X}\cap aa_{i}\cdot V$ having one $d_{1}$-dimensional component fewer than $\tilde{X}$.

If $\tilde{X}\cap aa_{i}\cdot V$ has still some $d_{1}$-dimensional components, we repeat the argument with a new $a'\in A$ such that $(\tilde{X}\cap aa_{i}\cdot V)\cap a'(\tilde{X}\cap aa_{i}\cdot V)$ has one $d_{1}$-dimensional component fewer than $\tilde{X}\cap aa_{i}\cdot V$: this $a'$ must exist, or else the indices of the $d_{1}$-dimensional components of $\tilde{X}\cap aa_{i}\cdot V$ would have been in $S$. We iterate until there are no more $d_{1}$-dimensional components outside $\left(\bigcup_{s'\in S}\tilde{X}_{s'}\right)\cup X_{2}$. Therefore, if $\overline{a_{1}}\cdot V,\ldots,\overline{a_{l}}\cdot V$ are the translates of $V$ we used throughout the procedure, we can write $X'=X\cap\bigcap_{i'=1}^{l}\overline{a_{i'}}\cdot V$ as $X'_{1}\cup X'_{2}$, with $X'_{2}=\left(\bigcup_{s'\in S}\tilde{X}_{s'}\right)\cup X_{2}$ being $G$-invariant and with $X'_{1}$ defined as the union of all components of $\tilde{X}\cap\bigcap_{i'=1}^{l}\overline{a_{i'}}\cdot V$ not contained in $X'_{2}$. Both $X'_{1},X'_{2}$ are unions of irreducible components of $X'$, distinct among the two sets by definition, and $\dim(X'_{1})\leq d_{1}-1\leq d-j$ by construction. Thus, we rename $\overline{a_{i'}}$ as $a_{m_{j-1}+i'}$, and we observe that $a_{m_{j-1}+i'}\in A^{k_{j-1}+i'}$: the $j$-th step is concluded, with $m_{j}=m_{j-1}+l$ and $k_{j}=k_{j-1}+l$.

It remains to bound the final $m$ and $k$. Write $X^{(d')}$ for the union of the $d'$-dimensional components of $X$. In the process above, $l\leq s-|S|\leq s\leq\deg(X_{1}^{(d_{1})})\leq\deg(X^{(d_{1})})$. At any step, $X$ is the intersection of varieties of the form $a\cdot V$, which are all of dimension $d$ and degree $D$ like $V$ itself: then, by Lemma~\ref{le:escinters}, we have in particular $\deg(X^{(d_{1})})\leq D^{d-d_{1}+1}$. Since $d_{1}$ decreases at every step, in the worst case we have
\begin{equation*}
m, k\leq\sum_{d_{1}=0}^{d}D^{d-d_{1}+1},
\end{equation*}
and we are done.
\end{proof}

Proposition~\ref{pr:strong-escape} has as an immediate corollary the following escape result.

\begin{corollary}\label{co:escape}
Let $K$, $G$ and $A$ be as in Proposition~\ref{pr:strong-escape} (we may drop the assumption $\langle A\rangle=G$). Consider a proper subvariety $V\subsetneq\mathbb{A}^{N}$ and a point $x\in\mathbb{A}^{N}(K)$ with $\langle A\rangle x\not\subseteq V(K)$.

Then, if $d=\dim(V)$ and $D=\deg(V)$, there is an element $g\in A^{k}$ such that $gx\not\in V(K)$ with $k=(d+1)D^{d+1}$; more precisely, we can choose $k$ as defined in~\eqr{mk}.
\end{corollary}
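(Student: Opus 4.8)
The plan is to read this off directly from Proposition~\ref{pr:strong-escape}, which has already done all the real work. Applying that proposition to the subvariety $V$, the group $G$, and the set $A$, I obtain elements $e=a_{0},a_{1},\ldots,a_{m}\in A^{k}$, with $m,k$ exactly as in~\eqref{eq:mk}, such that the intersection $Z:=\bigcap_{i=0}^{m}a_{i}\cdot V$ is $G$-invariant. Since $a_{0}=e$ acts as the identity on $\mathbb{A}^{N}$, the $i=0$ term of the intersection is $a_{0}\cdot V=V$, and hence $Z\subseteq V$; in particular $Z(K)\subseteq V(K)$.

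Next I would argue by contradiction. Suppose $a_{i}^{-1}x\in V(K)$ for every $i\in\{0,1,\ldots,m\}$. Writing $\phi:G\to\mathrm{GL}_{N}(K)$ for the action, $\phi(a_{i}^{-1})=\phi(a_{i})^{-1}$, so $a_{i}^{-1}x\in V(K)$ is equivalent to $x\in\phi(a_{i})V(K)=(a_{i}\cdot V)(K)$. Thus $x\in\bigcap_{i=0}^{m}(a_{i}\cdot V)(K)=Z(K)$. Because $Z$ is $G$-invariant and each $\phi(g)$ is a $K$-linear automorphism of $\mathbb{A}^{N}$ defined over $K$, the set $Z(K)$ is stable under $G$, so $Gx\subseteq Z(K)\subseteq V(K)$, contradicting the hypothesis $Gx\not\subseteq V(K)$. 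Therefore there exists an index $i\le m$ with $a_{i}^{-1}x\notin V(K)$. Since $e\in A=A^{-1}$, we have $A^{j}\subseteq A^{k}$ for $j\le k$ and $(A^{k})^{-1}=A^{k}$, so $g:=a_{i}^{-1}\in A^{k}$ is the required element with $gx\notin V(K)$.

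It remains only to record the size of $k$. From~\eqref{eq:mk}, $k=\sum_{d'=0}^{d}D^{d-d'+1}$, which is a sum of $d+1$ terms each bounded by $D^{d+1}$ (here $D=\deg(V)\ge 1$, since a nonempty proper subvariety of $\mathbb{A}^{N}$ has positive degree; the case $V=\emptyset$ is trivial, taking $g=e$). Hence $k\le(d+1)D^{d+1}$, and the sharper value from~\eqref{eq:mk} is available as claimed. I do not expect any genuine obstacle here: the only point demanding a moment's care is bookkeeping on the direction of the action, namely invoking the symmetry $A=A^{-1}$ to pass from the translates $a_{i}\cdot V$ occurring in the $G$-invariant intersection to the multipliers $a_{i}^{-1}$ that push $x$ out of $V$.
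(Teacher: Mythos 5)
Your proof is correct and follows the same route as the paper's: apply Proposition~\ref{pr:strong-escape} to obtain the $G$-invariant intersection $Z\subseteq V$, deduce that $x\notin Z(K)$ from $Gx\not\subseteq V(K)$, and take $g=a_{i}^{-1}$ using $A=A^{-1}$. The extra bookkeeping on the bound $k\leq(d+1)D^{d+1}$ and the empty-variety edge case are fine.
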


\begin{proof}
Without loss of generality we may redefine $G$ to be $\langle A\rangle$ itself. By Proposition~\ref{pr:strong-escape}, there is a variety $Z=\bigcap_{i=0}^{m}g_{i}V$ that is $G$-invariant under the left multiplication action, and such that $g_{i}\in A^{k}$ with $k$ given in \eqref{eq:mk}. If $x\in Z(K)$ then $Gx\subseteq Z(K)\subseteq V(K)$, so $x$ cannot be contained in $Z(K)$; therefore we must have $x\not\in g_{i}V(K)$ for some $i$, and the result follows taking $g=g_{i}^{-1}$.
\end{proof}

Let us add here another useful application of Lemma~\ref{le:escinters}.

\begin{remark}\label{re:degesc}
If $Z$ is any intersection of varieties of dimension $\leq d$ and degree $\leq D$, then $\deg(Z)\leq D^{d+1}$. In fact, given the constraint of~\eqref{eq:escinters}, we maximize the degree of $Z$ when $\dim(Z)=0$, and the conclusion follows.
\end{remark}

\subsection{Ariadne's cookbook}

Proposition~\ref{pr:strong-escape} can also be exploited in a more subtle way. In a classical escape lemma, given a point $x$ whose orbit is not contained in a variety $V$, we manage to make $x$ escape quickly from $V$. For technical reasons however, we shall need a stronger statement: not only does a short recipe exist for $x$ to escape, but also there are few recipes to choose from, regardless of the point $x$ with which we start.

A mythical example may help. Theseus is going to be placed in the labyrinth of the Minotaur, from which an escape route exists. However, he does not know exactly where inside the labyrinth he will be starting from. Thus, Ariadne compiles for him a cookbook of escape recipes: no matter where he finds himself, at least one of the recipes is going to lead him outside.

This is the aim of Corollary~\ref{co:ariadne} below. Let $V$ be a variety from which it is possible to escape. Then there is a cookbook $\mathcal{F}$ of recipes $f$ such that: \eqref{co:ariadne-few} the cookbook is short; \eqref{co:ariadne-short} each recipe is short; \eqref{co:ariadne-escape} for any point of $V$, at least one recipe works. This is the meaning of the three properties in the statement.

For our future convenience, we actually work here with $s$-tuples of matrices. Thus for any finite set $S$, to avoid any ambiguity with $S^{s}$, we write $S^{\times s}$ for the set $S\times\ldots\times S$ ($s$ times).

\begin{corollary}\label{co:ariadne}
Let $K$, $G$ and $A$ be as in Proposition~\ref{pr:strong-escape}, where $G<\mathrm{GL}_{N}(K)$ acts on $\mathrm{Mat}_{N}\simeq\mathbb{A}^{N^{2}}$. Consider a proper subvariety $V\subsetneq\mathbb{A}^{N^{2}s}$ for some integer $s$.

Then, there are integers $m,k$ bounded as in~\eqref{eq:mk}, with $D=\deg(V)$ and $d=\min\{\dim(V),N^{2}-1\}$,
and there is a set $\mathcal{F}$ of invertible functions $f:\mathbb{A}^{N^{2}s}(K)\rightarrow\mathbb{A}^{N^{2}s}(K)$ with the following properties:
\begin{enumerate}[(1)]
\item\label{co:ariadne-few} $|\mathcal{F}|\leq(m+1)^{s}$;
\item\label{co:ariadne-short} for any $\vec{z}\in\mathbb{A}^{N^{2}s}(K)$ and any $f\in\mathcal{F}$ we have $f(\vec{z})\in(A^{k})^{\times s}\cdot\vec{z}$;
\item\label{co:ariadne-escape} for any $\vec{z}\in\mathbb{A}^{N^{2}s}(K)$, if $G^{s}\cdot\vec{z}\not\subseteq V(K)$ then there is some $f\in\mathcal{F}$ such that $f(\vec{z})\notin V(K)$.
\end{enumerate}
If $G=\mathrm{SL}_{N/2}(K)$ with $\mathrm{SL}_{N/2}<\mathrm{GL}_{N}$ as in Definition~\ref{de:ambient}, then inside the bound for $m,k$ we may use $D=\deg(V\cap G)$ and $d=\min\{\dim(V\cap G),\frac{1}{4}N^{2}-1\}$.
\end{corollary}

\begin{proof}
Take $\vec{z}=(z_{1},\ldots,z_{s})\in\mathbb{A}^{N^{2}s}(K)$. Our strategy is to construct the functions $f$ pointwise and componentwise, building for each $\vec{z}$ appropriate varieties in each of the $s$ copies of $\mathbb{A}^{N^{2}}$ from which to escape, and use Proposition~\ref{pr:strong-escape} to obtain the desired bounds on $m,k$ (we comment on the case $G=\mathrm{SL}_{N/2}(K)$ at the end).

We begin by building the varieties, which we do assuming that we have $G^{s}\cdot\vec{z}\not\subseteq V(K)$ as in the hypothesis of~\eqref{co:ariadne-escape}. Fix arbitrarily an ordering of the elements in each of the orbits of the action of $G^{s}$ inside $\mathbb{A}^{N^{2}s}(K)$, and let $\vec{x}$ be the first element of $(G^{s}\cdot\vec{z})\setminus V(K)$ according to this ordering. In the first step, we define the variety
\begin{equation*}
V_{1}=V_{1}(\vec{x})=\{v\in\mathbb{A}^{N^{2}}:(v,x_{2},\ldots,x_{s})\in V\}.
\end{equation*}
Applying Proposition~\ref{pr:strong-escape} to $V_{1}$, we obtain a set $X_{1}$ of $m_{1}+1$ elements $g_{1}\in A^{k_{1}}$ such that $\bigcap_{g_{1}\in X_{1}}g_{1}\cdot V_{1}$ is $G$-invariant, for some $m_{1},k_{1}$ depending on $\dim(V_{1}),\deg(V_{1})$. For each $g_{1}\in X_{1}$, we then define the variety
\begin{equation*}
V_{2}=V_{2}(g_{1},z_{1},\vec{x})=\{v\in\mathbb{A}^{N^{2}}:(g_{1}^{-1}\cdot z_{1},v,x_{3},\ldots,x_{s})\in V\},
\end{equation*}
and again applying Proposition~\ref{pr:strong-escape} to $V_{2}$ we obtain $X_{2}$ such that $\bigcap_{g_{2}\in X_{2}}g_{2}\cdot V_{2}$ is $G$-invariant, with analogous $m_{2},k_{2}$. Proceeding similarly, at the $j$-th step we obtain for each choice of $g_{1},\ldots,g_{j-1}$ a variety
\begin{align*}
V_{j} & =V_{j}(g_{1},z_{1},\ldots,g_{j-1},z_{j-1},\vec{x}) \\
 & =\{v\in\mathbb{A}^{N^{2}}:(g_{1}^{-1}\cdot z_{1},\ldots,g_{j-1}^{-1}\cdot z_{j-1},v,x_{j+1},\ldots,x_{s})\in V\}
\end{align*}
and some $X_{j},m_{j},k_{j}$ corresponding to $V_{j}$. The varieties $V_{j}$ and the sets $X_{j}$ may depend on the choices of $\vec{x},\vec{z},g_{1},\ldots,g_{j-1}$, but we do have a uniform bound for all $m_{j},k_{j}$. In fact, if we denote by $\pi_{j}$ the projection to the $j$-th copy of $\mathbb{A}^{N^{2}}$, we have by definition
\begin{align*}
V_{j} & =\pi_{j}(V\cap L), & L & =\{(g_{1}^{-1}\cdot z_{1},\ldots,g_{j-1}^{-1}\cdot z_{j-1})\}\times\mathbb{A}^{N^{2}}\times\{(x_{j+1},\ldots,x_{s})\},
\end{align*}
implying that $\deg(V_{j})\leq\deg(V)$ by B\'ezout and Lemma~\ref{le:zarimdeg}. Moreover either $V_{j}\subsetneq\mathbb{A}^{N^{2}}$, which yields $\dim(V_{j})\leq\min\{\dim(V),N^{2}-1\}$, or $V_{j}=\mathbb{A}^{N^{2}}$, for which we can trivially choose $X_{j}=\{e\}$. Therefore, all possible values of $m_{j},k_{j}$ are bounded by $m,k$ as in our statement.

Now we build $\mathcal{F}$. In order to have~\eqref{co:ariadne-few}, we define one function $f_{\vec{h}}$ for each tuple $\vec{h}\in\{1,\ldots,m+1\}^{\times s}$ (some functions may be equal to each other, whence the inequality). For each $\vec{z}$ with $G^{s}\cdot\vec{z}\not\subseteq V(K)$ we perform the construction described above: for convenience, we can redefine each $X_{j}$ to be an ordered $(m+1)$-tuple, whose entries we fill with the values in the original set and with additional copies of $e$ (if the set had fewer than $m+1$ elements). Then, we set
\begin{equation*}
f_{h_{1},\ldots,h_{s}}(\vec{z})=(g_{1,h_{1}}^{-1}\cdot z_{1},g_{2,h_{2}}^{-1}\cdot z_{2},\ldots,g_{s,h_{s}}^{-1}\cdot z_{s}),
\end{equation*}
where for each $1\leq j\leq s$ the element $g_{j,h_{j}}$ is the $h_{j}$-th entry of $X_{j}$ (we recall that $X_{j}$ depends on $\vec{z}$ and on the previous choices $g_{1,h_{1}},\ldots,g_{j-1,h_{j-1}}$). For the points $\vec{z}$ for which $G^{s}\cdot\vec{z}\subseteq V(K)$, we may set $f_{\vec{h}}(\vec{z})=\vec{z}$ for all $\vec{h}$.

By construction we have already~\eqref{co:ariadne-few} and~\eqref{co:ariadne-short}. Now we prove that $f_{\vec{h}}$ is invertible, namely that from $\vec{h}$ and $\vec{y}$ we can retrieve uniquely $\vec{z}$ such that $f_{\vec{h}}(\vec{z})=\vec{y}$. The element $\vec{z}$ must sit in the same orbit of $G^{s}$ as $\vec{y}$; if this orbit is contained in $V(K)$ then $\vec{z}=\vec{y}$ and we are done. Otherwise, from the orbit of $\vec{y}$ we know $\vec{x}$, thanks to the ordering of the orbit: this lets us obtain $X_{1}$, which depends only on $\vec{x}$, and then $g_{1,h_{1}}$, which depends on $X_{1}$ and $h_{1}$; this way we reconstruct $z_{1}$. Then $\vec{x},z_{1},g_{1,h_{1}}$ give us $X_{2}$, which by knowing $h_{2}$ in turn gives us $g_{2,h_{2}}$ and $z_{2}$, and so on.

It remains to show \eqref{co:ariadne-escape}: since $G^{s}\cdot\vec{z}\not\subseteq V(K)$, we have our fixed element $\vec{x}\in(G^{s}\cdot\vec{z})\setminus V(K)$. We prove by induction that for all $1\leq j\leq s$ there is some $h_{j}$ for which $g_{j,h_{j}}^{-1}\cdot z_{j}\notin V_{j}$. First of all, $G\cdot z_{1}\not\subseteq V_{1}(K)$ by definition of $\vec{x}$ and $V_{1}$: thus $z_{1}$ cannot be in the $G$-invariant intersection of all $g_{1,h}\cdot V_{1}$, which means that there is some $h_{1}$ with $g_{1,h_{1}}^{-1}\cdot z_{1}\notin V_{1}$. Suppose now that $g_{j,h_{j}}^{-1}\cdot z_{j}\notin V_{j}$: then by definition $x_{j+1}\notin V_{j+1}$, which implies that $G\cdot z_{j+1}\not\subseteq V_{j+1}(K)$. As before, $z_{j+1}$ cannot be in the intersection of all $g_{j+1,h}\cdot V_{j+1}$, giving $g_{j+1,h_{j+1}}^{-1}\cdot z_{j+1}\notin V_{j+1}$ for some $h_{j+1}$. The induction works, and at the last step we obtain $f_{\vec{h}}(\vec{z})\notin V$ for our tuple of indices $\vec{h}$, proving \eqref{co:ariadne-escape}.

If $G=\mathrm{SL}_{N/2}(K)$ as in Definition~\ref{de:ambient} we can restrict all our constructions to $G$, in particular passing from $V$ to $V\cap G$. In each copy of $\mathbb{A}^{N^{2}}$, this change results in a restriction to the upper left corner in terms of dimension, so that $\dim(V_{j})\leq\frac{1}{4}N^{2}-1$ when $V_{j}$ is nontrivial: the alternative bound on $m,k$ follows.
\end{proof}

\subsection{Escape \`a la Shitov}

Aside from the classical route taken in Corollary~\ref{co:escape}, one can also rely on a different kind of result for varieties of matrices: in brief, for a variety $V\subseteq\mathrm{Mat}_{N}$ and a set of elements $A\subseteq\mathrm{GL}_{N}(K)$, the idea is to reduce ourselves to $A', V'$ for $V'$ linear inside some larger $\mathrm{Mat}_{N'}$, and then use a theorem by Shitov \cite{Shi19} as an escape argument for linear matrix varieties.

\begin{proposition}\label{pr:shitov}
Let $K$ be a field, and let $A\subseteq\mathrm{GL}_{N}(K)$ be a set with $e\in A$. Let $V\subseteq\mathrm{Mat}_{N}$ be a subvariety defined by a finite set of polynomial equations $P_{l}(g_{ij})=0$ of degree $\leq D$, and suppose that $V(K)$ does not contain $\langle A\rangle$.

Then there exists an element $g\in A^{k}\setminus V(K)$ with $k<11D(N+1)^{D}\log N$.
\end{proposition}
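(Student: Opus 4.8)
The plan is to reduce the general matrix-variety problem to the case of a \emph{linear} variety, where Shitov's theorem \cite{Shi19} gives an escape bound, and then to transfer that bound back. First I would set up the linearization: for each polynomial equation $P_l$ of degree $\leq D$ defining $V$, introduce new coordinates for all monomials in the entries $g_{ij}$ of degree $\leq D$, so that a point $g\in\mathrm{Mat}_N$ gets mapped to a point in a larger space $\mathrm{Mat}_{N'}$ whose coordinates record $g$ together with all its ``higher powers'' (products of up to $D$ entries). The number of such monomials is $\binom{N^2+D}{D}$, so one can take $N'$ with $N'^2$ of roughly this size; the relations $P_l=0$ become \emph{linear} in the new coordinates, cutting out a linear subvariety $V'\subseteq\mathrm{Mat}_{N'}$. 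The delicate point is that this linearization map must be realized compatibly with a \emph{group action}: I would define $A'\subseteq\mathrm{GL}_{N'}(K)$ so that the Veronese-type embedding $\phi:\mathrm{GL}_N\to\mathrm{GL}_{N'}$ is a group homomorphism with $\phi(A)\subseteq A'$ (or so that $A'=\phi(A)$), ensuring that $\phi(\langle A\rangle)=\langle A'\rangle$ and hence $V'(K)\not\supseteq\langle A'\rangle$ whenever $V(K)\not\supseteq\langle A\rangle$. This is the standard fact that the symmetric-power (or full tensor-power) representation of $\mathrm{GL}_N$ is again a matrix group, and monomial coordinates of degree $\leq D$ sit inside $\bigoplus_{j\leq D}(K^N\otimes K^N)^{\otimes j}$.

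Next I would apply the escape result for linear matrix varieties — Shitov's theorem — to the pair $(A',V')$. The point of using \cite{Shi19} rather than Corollary~\ref{co:escape} is that for a \emph{linear} variety the escape length is polynomial in the ambient matrix size and logarithmic, rather than exponential in $N'^2$; concretely it produces $g'\in (A')^{k'}\setminus V'(K)$ with $k'$ of order $N'\log N'$ times an absolute constant (this is where the shape $11\cdot(\text{size})\cdot\log(\text{size})$ in the target bound comes from). Then I would pull back: since $\phi$ is a homomorphism, $g'\in(A')^{k'}$ corresponds to some $g\in A^{k}$ with $k\leq k'$ (or $k=k'$ if $A'=\phi(A)$), and $g'\notin V'(K)$ forces $g\notin V(K)$ because $\phi(V\cap\text{relevant locus})\subseteq V'$ by construction. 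Finally I would plug in $N'^2\approx \binom{N^2+D}{D}\leq (N^2+D)^D\leq\big((N+1)^2\big)^D=(N+1)^{2D}$, so that $N'\leq (N+1)^D$ and $\log N'\leq D\log(N+1)$, wait — one should be slightly more careful: $N'\lesssim (N+1)^D$ gives $N'\log N'\lesssim (N+1)^D\cdot D\log N$, and absorbing the constant yields $k<11D(N+1)^D\log N$ as claimed. The bookkeeping of constants (making ``$11$'' work, comparing $\log N$ versus $\log(N+1)$ versus $\log N'$, and handling the small cases $N=1,2$ or $D=1$ separately) is routine but needs to be done honestly.

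The main obstacle I expect is \emph{not} the combinatorial count but the group-theoretic compatibility of the linearization: one must exhibit an explicit homomorphism $\phi:\mathrm{GL}_N(K)\to\mathrm{GL}_{N'}(K)$ — built from $g\mapsto (g,\; g^{\otimes 2},\ldots,g^{\otimes D})$ or the truncated symmetric algebra — whose coordinate functions generate exactly the degree-$\leq D$ monomial algebra, such that (i) $V$ is the $\phi$-preimage of a genuinely linear $V'$, and (ii) $\phi$ is injective on the relevant set so that non-containment transfers in both directions. A secondary subtlety is that Shitov's theorem is stated for subspaces/linear varieties and one must check its hypotheses match (e.g.\ that $V'$ is a linear subvariety of $\mathrm{Mat}_{N'}$, not merely of an abstract vector space, and that $A'\subseteq\mathrm{GL}_{N'}$ with $e\in A'$), which is immediate once the embedding lands in a block of the matrix algebra. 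Once these structural points are in place, the quantitative conclusion follows by the substitution above; the alternatives mentioned in the paragraph before the statement (direct use of Corollary~\ref{co:escape}, giving $k\leq N^2 D^{N^2}$, or linearize-then-Corollary, giving $k\leq N^{2D}$) are recovered as weaker bounds, confirming that the gain comes precisely from invoking the linear-variety escape theorem at the right moment.
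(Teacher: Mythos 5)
Your proposal is correct and follows essentially the same route as the paper: linearize the degree-$\leq D$ polynomial via a tensor-power homomorphism into $\mathrm{Mat}_{N'}$ with $N'\leq(N+1)^{D}$ so that $V$ pulls back from a linear variety, apply Shitov's escape theorem there, and transfer the bound back through the homomorphism. The only cosmetic difference is in handling inhomogeneity: you suggest the block-diagonal sum $g\mapsto\bigoplus_{j\leq D}g^{\otimes j}$, while the paper first embeds $\mathrm{Mat}_{N}$ into $\mathrm{Mat}_{N+1}$ with a $1$ in the corner to homogenize $P$ and then takes a single $D$-th Kronecker power; both realizations are homomorphisms and yield the same quantitative conclusion.
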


\begin{proof}
Without loss of generality, we assume that $V$ is defined by one polynomial $P$: in fact, there must be one among the $P_{l}$ such that $\langle A\rangle$ is not contained in its set of zeros. Unlike in Corollary~\ref{co:escape}, this assumption does not weaken our final bound, since $\dim(V)$ is irrelevant in our passages.

First of all, we reduce to $P$ homogeneous of degree $D$: consider the embedding
\begin{align*}
\iota & :\mathrm{Mat}_{N}(K)\rightarrow\mathrm{Mat}_{N+1}(K), & \iota(M) & =\begin{pmatrix} 1 & 0 \\ 0 & M \end{pmatrix},
\end{align*}
which preserves matrix multiplication, and notice that the escape of $A$ from $V$ takes as many steps as the escape of $\iota(A)$ from the variety $V'$ defined by the homogenization of $P$ through the variable $g_{00}$ corresponding to the top left coordinate.

Then we reduce ourselves to the case of a linear variety. To do so, consider the linearization map $\rho_{D}:\mathrm{Mat}_{N}\rightarrow\mathrm{Mat}_{N^{D}}$ defined as follows: if the rows and columns of $\mathrm{Mat}_{N^{D}}$ are indexed by tuples $\vec{i}=(i_{1},\ldots,i_{D}),\vec{j}=(j_{1},\ldots,j_{D})$ with $1\leq i_{k},j_{k}\leq N$, the matrix $M=(m_{ij})_{i,j}\in\mathrm{Mat}_{N}$ is sent to $\rho_{D}(M)$ whose $(\vec{i},\vec{j})$-th entry is $m_{i_{1}j_{1}}\cdot\ldots\cdot m_{i_{D}j_{D}}$. Then, escaping from $V$ in $\mathrm{Mat}_{N}$ is the same as escaping from $V'$ in $\mathrm{Mat}_{N^{D}}$ defined by a linear polynomial $P'$ obtained from $P$ by replacing each monomial $\prod_{k\leq D}g_{i_{k}j_{k}}$ with the corresponding variable $g_{\vec{i}\vec{j}}$.

Thus, applying $\iota$ and $\rho_{D}$ we have reduced ourselves to escaping from a linear variety $V'$ of $\mathrm{Mat}_{N'}$ with $N'=(N+1)^{D}$. Finally, we can reinterpret~\cite[Thm.~3]{Shi19} as follows: if $\langle A\rangle\not\subseteq V'(K)$ with $V'$ linear in $\mathrm{Mat}_{N'}$, then there must be an element of $A^{2N'\log_{2}N'+4N'}$ outside $V'(K)$; therefore we escape from this $V'$ (and the original $V$) in a number of steps bounded by
\begin{equation*}
\frac{2}{\log 2}(N+1)^{D}\log(N+1)^{D}+4(N+1)^{D}<11D(N+1)^{D}\log N. \qedhere
\end{equation*}
\end{proof}

By comparison, under the same conditions as in the statement above, Corollary~\ref{co:escape} would give $k\leq N^{2}D^{N^{2}}$, and using Corollary~\ref{co:escape} after linearization would give $k\leq N^{2D}$. 


\section{Estimates for conjugacy classes}\label{se:de-cl(g)}

In this section, we establish an upper bound for $|A^{t}\cap V(K)|$ where the variety $V=\Cl(g)$ is the conjugacy class of a regular semisimple element $g\in G(K)$. This is one of our key tools to prove the main result about the diameter bound of $G$. Recall that by Proposition~\ref{pr:clvar} $\Cl(g)$ is indeed a variety, defined over $K$ and whose dimension and degree are known. Since $\Cl(g)$ is also the image of an irreducible variety (i.e., $G$ itself) via the morphism $x\mapsto xgx^{-1}$, $\Cl(g)$ is irreducible as well.

Here are the key ideas of the proof. Recall that we set $\ell=\frac{\dim(G)}{r}$ throughout the paper. First, we build a map $f:V^{\ell}\rightarrow G^{\ell-1}$ with two properties: $f(A^{t})\subseteq A^{k(t)}$ for some controlled exponent $k(t)$, and all the fibres through $f$ outside of an exceptional subvariety $E$ of $V^{\ell}$ have dimension $0$. If we had $E=\emptyset$, we would have $|A^{t}\cap V(K)|^{\ell}\leq C|A^{k(t)}|^{\ell-1}$, where $C$ is a bound on the number of elements of $A^{t}$ on a single fibre, and we would be done. We still have to deal with $E$. In a usual dimensional estimate, where $V$ is a general variety, we can set up an induction process on the dimension and treat $E$ by inductive hypothesis: this is what has been done in \cite{BGT11} and \cite{PS16}, and also what we do in \cite{BDH24}. However, we can do better: since our $V$ is the orbit of an action, namely conjugation, we can escape from $E$ using the techniques of Section~\ref{se:escape} up to increasing $C$ and $k(t)$ slightly (depending on $\deg(E)$).

The plan above unfolds as follows. In Proposition~\ref{pr:clg} we define a suitable map $f$, proving that its generic fibre is $0$-dimensional. In Lemma~\ref{le:de} we describe the exceptional $E$ collecting all large fibres: our $E$ is larger than strictly necessary, but we gain in the process because its degree is small. The main result of the section is Theorem~\ref{th:cl(g)-bound}, in which we put the previous facts together and obtain the estimate.

\subsection{The map and the large fibres}\label{se:de-cl(g)-prel}

We begin by producing the map $f$.

\begin{proposition}\label{pr:clg}
Let $G<\mathrm{GL}_{N}$ be an untwisted classical group of rank $r$ defined over a field $K=\mathbb{F}_{q}$ with $\mathrm{char}(\mathbb{F}_{q})>N$. Let $V=\Cl(g)$ for some $g\in G$ regular semisimple. Then, for $\ell=\frac{\dim(G)}{r}$, the map $f:V^{\ell}\rightarrow G^{\ell-1}$ defined by
\begin{equation}\label{eq:f}
f(v_{1},v_{2},\ldots,v_{\ell})=(v_{1}^{-1}v_{2},v_{2}^{-1}v_{3},\ldots,v_{\ell-1}^{-1}v_{\ell})
\end{equation}
is dominant.
\end{proposition}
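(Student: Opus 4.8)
The goal is to show that the morphism $f:V^{\ell}\rightarrow G^{\ell-1}$, $f(v_{1},\ldots,v_{\ell})=(v_{1}^{-1}v_{2},\ldots,v_{\ell-1}^{-1}v_{\ell})$, is dominant, i.e.\ that $\overline{f(V^{\ell})}=G^{\ell-1}$. The natural way to do this is a dimension count combined with a smoothness/fibre-dimension argument. We have $\dim(V^{\ell})=\ell\dim(V)=\ell(\dim(G)-r)=\ell\dim(G)-\ell r=\ell\dim(G)-\dim(G)=(\ell-1)\dim(G)=\dim(G^{\ell-1})$, using $\ell=\dim(G)/r$ and Proposition~\ref{pr:clvar}. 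So source and target have the \emph{same} dimension, and it suffices to exhibit a single point of $G^{\ell-1}$ whose fibre under $f$ is finite (equivalently, zero-dimensional); since $V^{\ell}$ and $G^{\ell-1}$ are irreducible (products of irreducibles, $V=\mathrm{Cl}(g)$ being an orbit hence irreducible, $G$ being irreducible by Section~\ref{se:chev}), a single generic fibre of dimension $\dim(V^{\ell})-\dim(G^{\ell-1})=0$ forces $\overline{f(V^{\ell})}$ to have full dimension $\dim(G^{\ell-1})$, hence to equal $G^{\ell-1}$.

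\textbf{Reducing the fibre count to a centralizer computation.} Fix the target point $(e,e,\ldots,e)\in G^{\ell-1}$ (or work over $\overline{K}$ and choose coordinates freely). A tuple $(v_{1},\ldots,v_{\ell})\in V^{\ell}$ lies in $f^{-1}(e,\ldots,e)$ exactly when $v_{1}=v_{2}=\cdots=v_{\ell}$; so this particular fibre is just the diagonal copy of $V$, which is \emph{not} finite — hence $(e,\ldots,e)$ is the wrong test point. Instead I would pick a target of the form $(h_{1},\ldots,h_{\ell-1})$ and analyse $f^{-1}(h_{1},\ldots,h_{\ell-1})$: such a tuple is determined by $v_{1}$ together with $v_{2}=v_{1}h_{1}$, $v_{3}=v_{1}h_{1}h_{2}$, \ldots, $v_{\ell}=v_{1}h_{1}\cdots h_{\ell-1}$, subject to the $\ell$ membership conditions $v_{1}\in V$, $v_{1}h_{1}\in V$, \ldots, $v_{1}h_{1}\cdots h_{\ell-1}\in V$. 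Writing $w_{0}=e$, $w_{j}=h_{1}\cdots h_{j}$, the fibre is in bijection with $\{x\in G: xw_{j}\in V \text{ for }0\le j\le \ell-1\}$ with $x=v_{1}$; that is, with $\bigcap_{j=0}^{\ell-1}V w_{j}^{-1}$, an intersection of $\ell$ translates of the conjugacy class $V$. Now $V=\mathrm{Cl}(g)$ is the fibre over a single point of the ``characteristic polynomial'' (or more precisely the adjoint quotient / Steinberg map) $\chi:G\rightarrow \mathbb{A}^{r}$ sending an element to the coefficients of its characteristic polynomial (equivalently, to the image in the Weyl-group quotient of the torus), and $g$ being regular semisimple means this fibre is a single smooth orbit of dimension $\dim G-r$; the differential of $\chi$ at a regular semisimple point is surjective. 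So the condition ``$xw_{j}\in V$ for all $j$'' is a system of $\ell r=\dim G$ equations cutting out the $\dim G$-dimensional $G$, and I want to show that for a suitable choice of the $w_{j}$ these equations are \emph{independent} at some point, giving a zero-dimensional intersection.

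\textbf{The main obstacle and how to handle it.} The heart of the matter is producing translations $w_{1},\ldots,w_{\ell-1}$ (equivalently $h_{1},\ldots,h_{\ell-1}$) for which $\bigcap_{j}Vw_{j}^{-1}$ is finite, i.e.\ showing the tangent spaces $T_{x}(Vw_{j}^{-1})$ at a common point $x$ intersect in $\{0\}$. At a point $x$ with $xw_{j}\in V$, using that $V$ is a single conjugacy class one has $T_{xw_{j}}V = \{[\xi,xw_{j}]:\xi\in\mathfrak{g}\}$ (bracket in $\mathfrak{g}$, thinking of the ambient matrices), whose right-translate back gives $T_{x}(Vw_{j}^{-1})=\{[\xi,xw_{j}]w_{j}^{-1}:\xi\in\mathfrak{g}\}$; the orthogonal complement (with respect to the Killing form, which is nondegenerate since $G$ is semisimple) of this subspace is the translate of the centralizer $\mathfrak{c}(xw_{j})=\mathfrak{c}(g')$ of a regular semisimple element, an $r$-dimensional Cartan subalgebra. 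So the fibre being zero-dimensional is equivalent to $\sum_{j=0}^{\ell-1}\mathfrak{c}_{j}=\mathfrak{g}$ where $\mathfrak{c}_{j}$ are $\ell$ appropriately conjugated Cartan subalgebras, a sum of $\ell$ subspaces of dimension $r$ each inside a space of dimension $\ell r$. This is exactly the kind of ``$\ell$ generic Cartan subalgebras span $\mathfrak{g}$'' statement that is a standard transversality fact; I would prove it by exhibiting an explicit configuration (e.g.\ one Cartan plus its conjugates by generic elements, or by a direct computation in the defining matrix model of each of the four types $\mathrm{SL}_{n},\mathrm{SO}_{2n},\mathrm{SO}_{2n+1},\mathrm{Sp}_{2n}$) or by a dimension-of-incidence-variety argument over $\overline{K}$. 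Once this linear-algebra fact is in hand, the implicit function theorem / generic-smoothness gives a zero-dimensional fibre at the corresponding point, whence $f$ is dominant. A cleaner packaging, which I would prefer for the write-up, is: show directly that $\mathrm{d}f$ is surjective at a single well-chosen point $(v_{1},\ldots,v_{\ell})$ with the $v_{i}$ pairwise ``generic'' — the differential of $f$ in terms of tangent vectors $\dot v_{i}\in T_{v_{i}}V$ is $(\dot v_{1},\ldots,\dot v_{\ell})\mapsto(v_{1}^{-1}\dot v_{2}-v_{1}^{-1}\dot v_{1}v_{1}^{-1}v_{2},\ldots)$, and surjectivity of this map reduces, after right-translating everything into $\mathfrak{g}$, to the statement that $\sum_i \mathrm{Ad}(\text{stuff})\,\mathfrak{c}(g)$ fills $\mathfrak{g}$ — the same span condition. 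The anticipated difficulty is entirely in verifying this span condition uniformly across the four families (and checking $\mathrm{char}(\mathbb{F}_q)>N$ suffices to keep the relevant tangent-space / Killing-form computations nondegenerate); the rest is formal.
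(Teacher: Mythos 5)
Your proposal is correct and follows essentially the same route as the paper: the equality $\dim(V^{\ell})=\dim(G^{\ell-1})$, the reduction of dominance to finding a zero-dimensional fibre, and the Killing-form duality converting ``the tangent spaces to the translates of $\mathrm{Cl}(g)$ intersect trivially'' into ``$\ell$ conjugated Cartan subalgebras span $\mathfrak{g}$'' are exactly the paper's steps (the characteristic hypothesis enters, as you say, only to guarantee nondegeneracy of the Killing form). The one piece you defer --- the span condition for $\ell$ generically conjugated Cartan subalgebras --- is the genuine content of the argument; the paper does not reprove it but imports it from~\cite[Lemma~5.2]{Hel11} and~\cite[Prop.~4.13]{Hel11}, so to complete your write-up you would either cite those results or carry out the explicit four-family computation you sketch (the paper does an analogous hands-on computation only for the non-maximal-torus variant, in Proposition~\ref{pr:torusoneg}).
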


\begin{proof}
By~\cite[\S I.8]{Mumford}, every fibre $V^{\ell}\cap f^{-1}(x)$ has dimension at least as large as $\dim(V^{\ell})-\dim(\overline{f(V^{\ell})})$, and strictly larger fibres form a proper subvariety of $V^{\ell}$. Therefore, since $\dim(V^{\ell})=\ell(\dim(G)-\dim(C(g)))=(\ell-1)\dim(G) = \dim(G^{\ell-1})$
 and $G^{\ell-1}$ is irreducible, $f$ is not dominant if and only if every fibre is of dimension $\geq 1$.

Fix an element $\vec{x}=(x_{1},\ldots,x_{\ell})\in V^{\ell}$, and write $\mathfrak{g}$ for the Lie algebra of $G$. For each $i$, define the maps
\begin{align*}
\varphi_{i} & :G\rightarrow G, & \varphi_{i}(y) & =y^{-1}x_{i}yx_{i}^{-1}, \\
\phi_{i} & :\mathfrak{g}\rightarrow\mathfrak{g}, & \phi_{i}(y) & =-y+\mathrm{Ad}_{x_{i}}(y).
\end{align*}
Then
\begin{equation*}
T_{x_{i}}V=T_{x_{i}}\Cl(x_{i})\simeq T_{e}\Cl(x_{i})x_{i}^{-1}=T_{e}(\varphi_{i}(G))=\phi_{i}(\mathfrak{g}).
\end{equation*}
If the fibre $Z\subseteq V^{\ell}$ of $f$ at $\vec{x}$ has dimension $\geq 1$, take a nonzero vector $\vec{t}=(t_{1},\ldots,t_{\ell})$ in the corresponding subspace $\mathfrak{z}\subseteq\prod_{i}\phi_{i}(\mathfrak{g})$. By the definition of $f$,
\[Df(\vec{t}) = (x_1^{-1} (t_2 - t_1) x_2, x_2^{-1} (t_3 - t_2) x_3,\dotsc),\]
and so  $t_{i}=t_{j}$ for all $i,j$. Hence, it would follow that $\bigcap_{i}\phi_{i}(\mathfrak{g})$ is nontrivial.

Write $\mathfrak{t}$ for $T_{e}C(x_{i})$. We would like to show that $\mathfrak{t}$ is orthogonal to the space
$\phi_i(\mathfrak{g})$ with respect to the Killing form $\langle \cdot,\cdot\rangle$. For $t\in \mathfrak{t}$ and $\mathrm{Ad}_{x_i}(y) - y
\in \phi_i(\mathfrak{g})$, using the fact that the Killing form is invariant under the adjoint action $\mathrm{Ad}_{x_i}$, we see that
\[\begin{aligned}\langle t, \mathrm{Ad}_{x_i}(y) - y\rangle &= \langle t, \mathrm{Ad}_{x_i}(y)\rangle - 
\langle t, y\rangle = \langle \mathrm{Ad}_{x_i^{-1}}(t), y\rangle - \langle t,y\rangle\\
&= \langle t,y\rangle - \langle t,y \rangle = 0,\end{aligned}\]
since $\mathrm{Ad}_{x_i^{-1}}(t) = t$.
Thus, the spaces $\phi_{i}(\mathfrak{g})$ and $T_{e}C(x_{i})$ are orthogonal with respect to the Killing form.
By our conditions on $\mathrm{char}(K)$, the Killing form is non-degenerate; for details see~\cite[p.~47]{Sel67}. 
Hence, if $\bigcap_{i}\phi_{i}(\mathfrak{g})$ is nontrivial, the tangent spaces $T_{e}C(x_{i})$ span a proper subspace of $\mathfrak{g}$.

By~\cite[Lemma~5.2]{Hel11} and~\cite[Prop.~4.13]{Hel11}, for a classical group over a field $K$ of characteristic
$>\max\{r,2\}$, and $\mathfrak{t}$ the Lie algebra of a maximal torus of $G$,  the spaces $\mathfrak{t},\mathrm{Ad}_{g_{1}}(\mathfrak{t}),\ldots,\mathrm{Ad}_{g_{\ell-1}}(\mathfrak{t})$ span all of $\mathfrak{g}$
for a generic tuple $(g_{1},\ldots,g_{\ell-1})\in G^{\ell-1}$. Writing $x=x_{\ell}$ and $x_i = g_i x g_i^{-1}$ for $1\leq i\leq \ell-1$, we see that
$T_{e}C(x_{i})  = g_i T_e C(x) g_i^{-1} = \mathrm{Ad}_{g_i}(\mathfrak{t})$, where 
$\mathfrak{t} = T_e C(x)$.
We conclude that the fibre of $f$ at $\vec{x}$ is $0$-dimensional for $g_1,\ldots g_{\ell-1}$ generic.
Therefore, $f$ is dominant.
\end{proof}

For any map $g:X\rightarrow Y$, it is possible to define a derivative map $Dg|_{x=x_{0}}$ at some point $x_{0}\in X$ between the tangent spaces $TX|_{x=x_{0}},TY|_{y=g(x_{0})}$. Call $Dg|_{x=x_{0}}$ {\em non-singular} if it is non-singular as a linear transformation, i.e.\ if the only $v$ such that $Dg|_{x=x_{0}}(v)=0$ is the zero vector. We can use the singularity of the derivative of our map $f$ to produce the exceptional variety $E$.

\begin{lemma}\label{le:de}
Let $G,V,f,r,\ell,N$ be as in Proposition~\ref{pr:clg}. For any point $\vec{v}\in V^{\ell}$, denote by $X_{\vec{v}}$ the connected component of the fibre $f^{-1}(f(\vec{v}))$ in which $\vec{v}$ is contained, and let
\begin{equation*}
E'=\{\vec{v}\in V^{\ell}:\dim(X_{\vec{v}})>0\}.
\end{equation*}
Then there exists a variety of the form $E=\{\vec{x}\in\mathbb{A}^{N^{2}\ell}:F(\vec{x})=0\}$ such that $E\cap V^{\ell}\subsetneq V^{\ell}$, $E\supseteq E'$, and $\deg(E)\leq\deg(F)\leq 2(\ell-1)\dim(G)\leq 4r^{2}(2r+1)$.
\end{lemma}

\begin{proof}
Since by Proposition~\ref{pr:clg} we have $\dim(V^{\ell})=\dim(\overline{f(V^{\ell})})=\dim(G^{\ell-1})$, for generic $\vec{v}\in V^{\ell}$ the derivative map $Df|_{\vec{x}=\vec{v}}$ is non-singular, whereas it is singular for any $\vec{v}\in E'$. It is then enough to define $E$ so that it contains all points with singular derivative.

Fix a point $\vec{g}_{0}=(g_{0,1},\ldots,g_{0,\ell})\in V^{\ell}$ where the derivative is non-singular. Every element of $V$ is conjugate to all the $g_{0,i}$, so for any $\vec{v}\in V^{\ell}$ we can write $\vec{v}=(h_{i}g_{0,i}h_{i}^{-1})_{i=1}^{\ell-1}$ for some $h_{i}\in G$. Let $h_{i}=1+\Delta_{i}$, where $\Delta_{i}\in\mathrm{Mat}_{N}$: then $h_{i}^{-1}=1-\Delta_{i}+O(\Delta_{i}^{2})$, and we can rewrite
\begin{equation*}
f(v_{1},\ldots,v_{\ell})=f(g_{0,1},\ldots,g_{0,\ell})+([\Delta_{i},g_{0,i}^{-1}]g_{0,i+1}+g_{0,i}^{-1}[\Delta_{i+1},g_{0,i+1}])_{i=1}^{\ell-1}+\ldots,
\end{equation*}
where $[x,y]=xy-yx$, and where the final ellipsis hides terms of higher order in the $\Delta_{i}$. Since the derivative is non-singular at $\vec{g}_{0}$, for each $i$ there is a set of $\dim(V)=\left(1-\frac{1}{\ell}\right)\dim(G)$ values of $\Delta_{i}$ (say $w_{i,1},\ldots,w_{i,\dim(V)}$) with the following property. Consider the elements
\begin{equation*}
\vec{w}_{i,j}(\vec{g})=(0,\ldots,0,g_{i-1}^{-1}[w_{i,j},g_{i}],[w_{i,j},g_{i}^{-1}]g_{i+1},0,\ldots,0)
\end{equation*}
as vectors in $\mathbb{A}^{(\ell-1)N^2}$ for all $1\leq i\leq\ell$ and all $1\leq j\leq\dim(V)$, where in the notation above the two nonzero coordinates are the $(i-1)$-th and the $i$-th (for $i=1,\ell$ there is only one nonzero coordinate); then, the set of vectors $\{\vec{w}_{i,j}(\vec{g}_{0})\}_{i=1,j=1}^{\ell,\dim(V)}$ is linearly independent.

Now let $M$ be the $(\ell-1)\dim(G)\times(\ell-1)N^2$ matrix whose rows are the $\vec{w}_{i,j}(\vec{g})$, where the entries of the $w_{i,j}$ are considered constants. Since for one choice of $\vec{g}$ the rows are linearly independent, there is a $(\ell-1)\dim(G)\times(\ell-1)\dim(G)$ minor $F(\vec{g})$ of $M$ that is not identically zero for $\vec{g}$ varying in $V^{\ell}$; on the other hand, a singular derivative in some point implies in particular that all the minors of $M$ are zero there.
Therefore, we can define $E$ by the equation $F(\vec{g})=0$: $E\cap V^{\ell}$ is proper in $V^{\ell}$ because $F(\vec{g}_{0})\neq 0$, and it contains $E'$.

Finally, every entry of $M$ is of degree $2$ in the entries of the $g_{i}$, because $g_{i}^{-1}$ is of degree $1$; hence,
\begin{equation*}
\deg(E)\leq\deg(F)\leq 2(\ell-1)\dim(G)\leq 4r^{2}(2r+1),
\end{equation*}
and we are done.
\end{proof}

\subsection{Dimensional estimate}

Now we put together Sections~\ref{se:escape} and~\ref{se:de-cl(g)-prel} to obtain the estimate we want.

We start by proving that escaping will indeed be possible during the proof of the main result. Recall that we write $S^{\times k}$ for the set $S\times\ldots\times S$ ($k$ times), so as to distinguish it from $S^{k}$. Also, we have $\langle A^{\times k}\rangle=\langle A\rangle^{\times k}=G(K)^{\times k}=G^{k}(K)$.

\begin{lemma}\label{le:escwecan}
Let $G<\mathrm{GL}_{N}$ be an untwisted classical group of rank $r$ defined over $K=\mathbb{F}_{q}$.  Assume that $q\geq e^{8r\log(2r)}$ and $\mathrm{char}(\mathbb{F}_{q})>2$. Let $g\in G(K)$ be regular semisimple, and let $V=\Cl(g)$.

Then, for any variety $E=\{\vec{x}\in\mathbb{A}^{N^{2}\ell}:F(\vec{x})=0\}$ defined over $\overline{K}$ with $E\cap V^{\ell}\subsetneq V^{\ell}$ and $\deg(F)\leq 2(\ell-1)\dim(G)$, we have $E(\overline{K})\cap V^{\ell}(K)\subsetneq V^{\ell}(K)$.
\end{lemma}

\begin{proof}
By Proposition~\ref{pr:clvar}, $V$ is a variety defined over $K$, and by definition the set $V(K)$ is closed under the action of conjugation by $G(K)$. Divide $V(K)$ into the orbits of this action, and take any set $S$ of elements of $G(\overline{K})$ such that for each orbit $O$ there is exactly one $s\in S$ with $sgs^{-1}\in O$.

For each $\vec{s}=(s_{1},\ldots,s_{\ell})\in S^{\times\ell}$, let $\varphi_{\vec{s}}:G^{\ell}\rightarrow V^{\ell}$ be the map
\begin{equation*}
\varphi_{\vec{s}}(h_{1},\ldots,h_{\ell})=(h_{1}s_{1}gs_{1}^{-1}h_{1}^{-1},\ldots,h_{\ell}s_{\ell}gs_{\ell}^{-1}h_{\ell}^{-1}).
\end{equation*}
By construction, $V^{\ell}(K)$ is the disjoint union of the $\varphi_{\vec{s}}(G(K))$ for all $\vec{s}\in S$. The variety $\tilde{E}_{\vec{s}}=\varphi_{\vec{s}}^{-1}(E)$ is defined by the equation $F(\varphi_{\vec{s}}(\vec{x}))=0$: observe that $F\circ\varphi_{\vec{s}}$ is a polynomial of degree $\leq 4(\ell-1)\dim(G)$ because the inverse map has degree $1$. Since $E\cap V^{\ell}\subsetneq V^{\ell}$, by definition we also have $\tilde{E}_{\vec{s}}\cap G^{\ell}\subsetneq G^{\ell}$: therefore, we can apply Corollary~\ref{co:langweil}\eqref{co:langweilariadne} to $\tilde{E}_{\vec{s}}$ and obtain that $\tilde{E}_{\vec{s}}(\overline{K})\cap G^{\ell}(K)\subsetneq G^{\ell}(K)$. In particular, by the definition of $\tilde{E}_{\vec{s}}$ we obtain as well $E(\overline{K})\cap\varphi_{\vec{s}}(G(K))\subsetneq\varphi_{\vec{s}}(G(K))$. Since the union is disjoint, this implies that $E(\overline{K})\cap V^{\ell}(K)\subsetneq V^{\ell}(K)$.
\end{proof}

\begin{theorem}\label{th:cl(g)-bound}
Let $G<\mathrm{GL}_{N}$ be an untwisted classical group of rank $r$ defined over $K=\mathbb{F}_{q}$.  Assume that $q\geq e^{8r\log(2r)}$ and $\mathrm{char}(\mathbb{F}_{q})>N$. Let $A$ be a symmetric set of generators of $G(K)$. Let $g\in G(K)$ be regular semisimple, and let $V=\Cl(g)$.

Then, for any $t\geq 1$, we have $|A^{t}\cap V(K)|\leq C_{1}|A^{C_{2}}|^{\frac{\dim(V)}{\dim(G)}}$ with $C_{1}\leq(2r)^{18r^{2}}$, and $C_{2}\leq(2r)^{17r^{2}}+2t$.
\end{theorem}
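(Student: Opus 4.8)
The strategy is the standard "escape + fibre" counting argument, made quantitative using the tools assembled above. Fix the regular semisimple $g$ and $V = \mathrm{Cl}(g)$, with $\dim(V) = \left(1-\frac{1}{\ell}\right)\dim(G)$ and $\deg(V) \leq 2^{3r^2}r^{2r}$ by Proposition~\ref{pr:clvar}. By Proposition~\ref{pr:clg} the map $f : V^{\ell} \to G^{\ell-1}$ of~\eqref{eq:f} is dominant, and by Lemma~\ref{le:de} there is a variety $E \subseteq \mathbb{A}^{N^2\ell}$ with $E \cap V^{\ell} \subsetneq V^{\ell}$, $\deg(E) \leq 2r^2(2r+1)^2$, containing the locus $E'$ of points with positive-dimensional fibre. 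The heuristic is: the set $(A^t \cap V)^{\times \ell}$ maps under $f$ into $A^{O(t)} \cap G^{\ell-1}$, the generic fibre of $f$ is finite, so $|A^t \cap V|^{\ell} \lesssim |A^{O(t)}|^{\dim(G^{\ell-1})/\dim(G^{\ell-1})} \cdot (\text{fibre bound})$ — wait, more precisely $|A^t\cap V|^\ell$ is at most the number of points in the image times the maximal fibre cardinality, and since a generic fibre has dimension $0$ one wants to bound generic fibre cardinality and control the bad locus $E$.

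The steps I would carry out. \emph{(1) Escape from $E$.} Apply Proposition~\ref{pr:strong-escape} (via Corollary~\ref{co:escape}, or directly) in the group $G(K)^{\times\ell} = G^\ell(K)$ acting on $\mathbb{A}^{N^2\ell}$, with generating set $A^{\times\ell}$ and subvariety $E$: this produces elements realizing $G^\ell$-invariance, hence lets us pick, for a given $\vec v \in (A^t\cap V)^{\times\ell}$, a translate $\vec a \cdot \vec v$ lying outside $E$ with $\vec a \in (A^{\times\ell})^k = (A^k)^{\times\ell}$, where $k$ is governed by~\eqref{eq:mk} applied to $d = \dim(E\cap V^\ell) < \dim(V^\ell)$ and $D = \deg(E \cap V^\ell)$; by B\'ezout and Remark~\ref{re:degesc} this $D$ is at most $\deg(V)^\ell \deg(E)$, which is $\leq (2r)^{O(r^2)}$, so $k \leq (2r)^{O(r^2)}$ — this is the source of the $C_2$ term not involving $t$. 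But the translate has moved us out of $V^\ell$; the fix is to translate only on one side / by diagonal elements that preserve $V^\ell$ fibrewise, or to replace $V$ by $\mathrm{Cl}(g)$ which is conjugation-invariant — I would instead use the conjugation action so that translates of $\vec v \in V^{\times\ell}$ stay in $V^{\times\ell}$, and escape from the conjugation-invariant hull of $E$. \emph{(2) Fibre bound at a good point.} For $\vec v \notin E$, the fibre $f^{-1}(f(\vec v)) \cap V^\ell$ is $0$-dimensional; bound its cardinality by its degree, which by Lemma~\ref{le:zarimdeg} / B\'ezout applied to $V^\ell$ cut by the $\dim(G^{\ell-1})$ equations pulling back a generic point is $\leq \deg(V^\ell)\deg(f)^{\dim(V^\ell)} \leq (2r)^{O(r^2)}$. \emph{(3) Counting.} Map $(A^t \cap V)^{\times\ell}$ into $G^{\ell-1}(K)$ via $f$; the image lies in $A^{2t+O(1)} \cap G^{\ell-1}(K)$ (each coordinate $v_i^{-1}v_{i+1}$ lies in $A^{2t+1}$), whose size is at most $|A^{2t+1}|^{\ell-1} = |A^{2t+1}|^{\dim(G^{\ell-1})/\dim(G)}$... rather, one passes to a single copy: the key point is that $|A^t\cap V|^\ell$ is at most (size of image under $f$) $\times$ (generic fibre bound) $+$ (contribution of the exceptional locus), and after the escape trick the exceptional contribution is absorbed at the cost of enlarging $t$ to $t + 2k$. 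Combining, $|A^t \cap V|^\ell \leq (\text{fibre const}) \cdot |A^{C_2}| $ with $C_2 \leq 2t + (2r)^{O(r^2)}$, giving $|A^t\cap V| \leq C_1 |A^{C_2}|^{1/\ell} = C_1 |A^{C_2}|^{\dim(V)/\dim(G) \cdot \frac{1}{1-1/\ell}\cdot\frac1\ell}$ — I would track the exponents carefully so the final exponent is exactly $\frac{\dim(V)}{\dim(G)} = 1 - \frac1\ell$, which requires relating $|A^{C_2}|$ on $\ell-1$ factors back to a single factor (Cauchy–Schwarz / pigeonhole to fix one coordinate, the classic move).

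The main obstacle is the bookkeeping in step (3): reconciling the factor $\ell$ coming from $V^\ell$ with the target exponent $\dim(V)/\dim(G) = (\ell-1)/\ell$, and making sure the escape-from-$E$ step does not destroy membership in $V^\ell$ nor blow up $t$ by more than an additive $(2r)^{O(r^2)} + O(t)$. The cleanest route is: fix $\ell-1$ of the coordinates by pigeonhole so that the number of $\vec v \in (A^t\cap V)^{\times\ell}$ with $f(\vec v)$ in a fixed fibre is at least $|A^t\cap V|^\ell / |A^{2t+1}\cap G^{\ell-1}|$, bound that fibre count by the escape-adjusted fibre degree $(2r)^{O(r^2)}$, and then observe $|A^{2t+1}\cap G^{\ell-1}(K)| = |A^{2t+1}\cap G(K)|^{\ell-1} \leq |A^{2t+1}|^{\ell-1}$, so $|A^t\cap V| \leq (2r)^{O(r^2)} \cdot |A^{2t+1}|^{(\ell-1)/\ell} = (2r)^{O(r^2)} |A^{2t+1}|^{\dim(V)/\dim(G)}$; then enlarge $2t+1$ to $C_2$ to fold in the $2k$ from the escape step, landing on $C_1 \leq (2r)^{38r^2}$ and $C_2 \leq (2r)^{21r^2} + 2t$ after plugging the explicit degree bounds.
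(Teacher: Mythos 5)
Your overall architecture is the paper's: dominant map $f:V^{\ell}\to G^{\ell-1}$, exceptional locus $E$ from Lemma~\ref{le:de}, escape via the \emph{conjugation} action (which, as you correctly note, keeps tuples inside $V^{\ell}$ and exploits the fact that a proper $G$-invariant subvariety of a conjugacy class is empty), finite generic fibres bounded by degree, and an $\ell$-th-root/pigeonhole step to land on the exponent $1-\tfrac1\ell=\dim(V)/\dim(G)$. All of that is sound.

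The genuine gap is quantitative and sits in your step (1). You propose to run Proposition~\ref{pr:strong-escape} once, on $E$ (or $E\cap V^{\ell}$) inside $\mathbb{A}^{N^{2}\ell}$. The escape bound \eqref{eq:mk} is $m,k\approx D^{d+1}$ where $d$ is the dimension of the variety being escaped from; here $d\geq\dim(E\cap V^{\ell})\approx\ell\dim(V)=\Omega(r^{3})$, so $k\geq 2^{\Omega(r^{3})}$ no matter how small $D$ is, and moreover your degree estimate is off: $\deg(V)^{\ell}\leq(2^{3r^{2}}r^{2r})^{\ell}$ is itself $2^{\Omega(r^{3})}$, not $(2r)^{O(r^{2})}$. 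The escape length $k$ enters $C_{2}$ \emph{additively} (the image of the escaped tuple lies in $(A^{4k+2t})^{\times(\ell-1)}$) and is not attenuated by the final $\ell$-th root, so your route yields $C_{2}=2^{\Omega(r^{3})}+2t$, which does not prove the stated $C_{2}\leq(2r)^{21r^{2}}+2t$ (and similarly degrades $C_{1}$ via the factor $m+1$). The paper's proof contains exactly the device you are missing: an induction on the coordinates that replaces $E$ by one-variable slices $E_{j}(\vec z;v_{j+1},\ldots,v_{\ell})\subseteq\mathbb{A}^{N^{2}}$, obtained by freezing the later coordinates at already-escaped values and the earlier ones at a fixed generic tuple $\vec z$; each escape then happens for a single copy of $G$ acting on $\mathbb{A}^{N^{2}}$, so $d=O(r^{2})$ and $m,k\leq(2r)^{21r^{2}}$. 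The price is a factor $(m+1)^{\ell}$ in the product count (one escape index per coordinate, recovered recursively in the reconstruction argument), which after the $\ell$-th root costs only $(m+1)$ in $C_{1}$. Without this slicing your argument is logically valid but proves a strictly weaker theorem.
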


\begin{proof}
Consider the map $f$ defined in \eqref{eq:f}, for which $\mdeg(f)\leq 2$, and let $E$ be the variety defined in Lemma~\ref{le:de} by a polynomial $F$. We know that $E\cap V^{\ell}$ is a proper subvariety of $V^{\ell}$ containing all the connected components of dimension $\geq 1$ of the fibres of $V^{\ell}$ through $f$, and we have also $\deg(E)\leq\deg(F)\leq 2(\ell-1)\dim(G)$.

Following the outline at the beginning of the section, our proof is articulated into two steps. First, we bound the number of points in the $0$-dimensional part of a fibre: see \eqref{eq:fibresize}. Second, we use the escape procedure as given in Corollary~\ref{co:ariadne} to end up outside $E$: this results in the construction of an escape map $w_{1}$ as in \eqref{eq:mapw2}. Combining the two steps, we find how many points of $(A^{t}\cap V(K))^{\times\ell}$ at most are sent to a unique point of some $(A^{C_{2}})^{\times(\ell-1)}$ via $f$, or rather via the composition $w_{2}=f\circ w_{1}$ defined in \eqref{eq:mapw2}. This procedure leads to \eqref{eq:combinedbound}, and then to the final estimate.

We start with the fibre bound. Let $\vec{x}\notin E$: then $\vec{x}$ sits in the union $S_{\vec{x}}$ of all $0$-dimensional components of the fibre $f^{-1}(f(\vec{x}))$. By definition, $S_{\vec{x}}$ is made of finitely many points: we want to estimate $|S_{\vec{x}}|$ independently from $\vec{x}$; we may do so by bounding the degree of the fibre directly, but there is a more convenient route. Given $\vec{y}=(y_{1},\ldots,y_{\ell-1})\in G^{\ell-1}$, define the variety
\begin{equation*}
W(\vec{y})=\{v_{1}\in G:p_{v_{1}}=p_{v_{1}y_{1}}=p_{v_{1}y_{1}y_{2}}=\ldots=p_{v_{1}y_{1}\ldots y_{\ell-1}}=p_{g}\},
\end{equation*}
where $p_{h}$ denotes the characteristic polynomial of $h$, and define also the map
\begin{align*}
F_{\vec{y}} & :G\rightarrow G^{\ell}, & F_{\vec{y}}(v_{1}) & =(v_{1},v_{1}y_{1},v_{1}y_{1}y_{2},\ldots,v_{1}y_{1}\ldots y_{\ell-1}).
\end{align*}
The map $F_{\vec{y}}$ is a morphism, and its inverse is just the projection to the first component, thus $F_{\vec{y}}(W(\vec{y}))$ is isomorphic to $W(\vec{y})$. On one hand, $F_{\vec{y}}(W(\vec{y}))$ is equal to $V^{\ell}\cap f^{-1}(\vec{y})$ by construction. Isomorphisms preserve the number of isolated points, i.e.\ of $0$-dimensional components, even when they do not preserve overall degrees: therefore
\begin{align*}
|S_{\vec{x}}| & =|\{\text{isolated points of $f^{-1}(f(\vec{x}))$}\}| \\
 & =|\{\text{isolated points of $W(f(\vec{x}))$}\}|\leq\deg(W(f(\vec{x}))).
\end{align*}
Then, arguing as in the proof of Proposition~\ref{pr:clvar}, we obtain that $\deg(W(f(\vec{x})))\leq(r!)^{\ell}\deg(G)$, which gives
\begin{equation}\label{eq:fibresize}
|S_{\vec{x}}|\leq(r!)^{\ell}\deg(G)\leq r^{4r^{2}}
\end{equation}
for $r>1$, and $|S_{\vec{x}}|\leq 2$ for $r=1$.

Now apply Corollary~\ref{co:ariadne} for $(G,K,A,V,s)=(G(K),K,A,E,\ell)$ and the action of $G(K)$ by conjugation. Thus, we know that there is a set of $(m+1)^{\ell}$ invertible functions, say $w_{\theta}$ for an index $1\leq\theta\leq(m+1)^{\ell}$, such that $w_{\theta}((A^{t}\cap V(K))^{\times\ell})\subseteq(A^{2k+t}\cap V(K))^{\times\ell}$ for all $\theta$, and also such that for every $\vec{x}\in V^{\ell}(K)$ there is some $\theta_{\vec{x}}$ with $w_{\theta_{\vec{x}}}(\vec{x})\notin E(K)$ (because $E\cap V^{\ell}\subsetneq V^{\ell}$ implies $V^{\ell}(K)\not\subseteq E(K)$ by Lemma~\ref{le:escwecan}). Construct the functions $w_{1}:V^{\ell}(K)\rightarrow V^{\ell}(K)$ and $w_{2}:V^{\ell}(K)\rightarrow G^{\ell-1}(K)$ given by
\begin{align}\label{eq:mapw2}
w_{1}(\vec{x}) & =w_{\theta_{\vec{x}}}(\vec{x}), & w_{2}(\vec{x}) & =f(w_{1}(\vec{x})).
\end{align}
We have
\begin{equation}\label{eq:containments}
w_{2}((A^{t}\cap V(K))^{\times\ell})\subseteq f((A^{2k+t}\cap V(K))^{\times\ell}\setminus E(K))\subseteq(A^{4k+2t})^{\times(\ell-1)},
\end{equation}
so let us examine preimages via $w_{2}$. On one hand, for any fixed $\vec{z}\in(A^{4k+2t})^{\times(\ell-1)}$ there can only be at most $\max\{2,r^{4r^{2}}\}$ elements $\vec{y}\in(A^{2k+t}\cap V(K))^{\times\ell}\setminus E(K)$ whose image via $f$ is $\vec{z}$: this is because each $\vec{y}$ must sit inside $S_{\vec{x}}$ (by definition of $E$), and the bound follows from \eqref{eq:fibresize}. On the other hand, for any fixed $\vec{y}$ there can only be at most $(m+1)^{\ell}$ elements $\vec{x}\in(A^{t}\cap V(K))^{\times\ell})$ whose preimage via $w_{1}$ is $\vec{y}$, because each $w_{\theta}$ is invertible. Combining these facts with \eqref{eq:containments}, we conclude that
\begin{equation*}
\left|(A^{t}\cap V(K))^{\times\ell}\right|\leq(m+1)^{\ell}\max\{2,r^{4r^{2}}\}\left|(A^{4k+2t})^{\times(\ell-1)}\right|,
\end{equation*}
which implies
\begin{equation}\label{eq:combinedbound}
|A^{t}\cap V(K)|\leq(m+1)\max\{2,r^{4r^{2}}\}^{1/\ell}|A^{4k+2t}|^{1-\frac{1}{\ell}}.
\end{equation}

It remains only to estimate the values of $m$ and $k$, which according to Corollary~\ref{co:ariadne} are given by Proposition~\ref{pr:strong-escape}. For $G\neq\mathrm{SL}_{n}$, by Proposition~\ref{pr:strong-escape}, Table~\ref{ta:basicg}, and the bound $\deg(E)\leq\deg(F)\leq 2(\ell-1)\dim(G)$, we obtain
\begin{equation}\label{eq:clmk}
m,k\leq\left(1+\frac{1}{\deg(F)-1}\right)\deg(F)^{N^{2}}<\frac{1}{4}(2r)^{17r^{2}},
\end{equation}
which gives us the desired $C_{2}$. For $G=\mathrm{SL}_{n}$ we may use the alternative bounds in Corollary~\ref{co:ariadne}: since $E$ is given by one equation $F(\vec{x})=0$, restricting to $G$ replaces the entries of $\vec{x}$ with polynomials of degree $n-1$ in the entries of the upper left corners of $\vec{x}$, thus yielding $\deg(E\cap G)\leq(n-1)\deg(F)$. The bound then becomes
\begin{equation}\label{eq:clmk2}
m,k\leq\left(1+\frac{1}{(n-1)\deg(F)-1}\right)((n-1)\deg(F))^{\frac{1}{4}N^{2}}<\frac{1}{4}(2r)^{17r^{2}},
\end{equation}
again giving the desired $C_{2}$. Then, inside~\eqref{eq:combinedbound} we can bound $(2r)^{17r^{2}}\max\{2,r^{4r^{2}}\}^{1/\ell}\leq(2r)^{18r^{2}}$ case by case, which gives us the desired $C_{1}$.
\end{proof}


\section{Estimates for non-maximal tori}\label{se:de-torus}

Our aim in this section is to prove an upper bound for $|A^{t}\cap T(K)|$, where $T$ is a torus of $G$ whose elements are not regular semisimple. This will help us conclude that not only can we find when necessary some $g\in A^{t}\cap T_{\max}(K)$, for $T_{\max}$ a maximal torus and for $t$ small, but that we can also assume $g$ to be regular semisimple. For the rest of the section we suppose that $r\geq 2$, since all tori are maximal by definition when the rank is $1$, and append the case $r=1$ only at the very end (Corollary~\ref{co:torusnonrs}).

The exponent in the dimensional estimate we obtain here is not as tight as in the case of $\Cl(g)$: instead of aiming for $\frac{\dim(T)}{\dim(G)}$, we content ourselves with $\frac{1}{\ell+1}$. As long as it is bounded away from $\frac{1}{\ell}=\frac{\dim(T_{\max})}{\dim(G)}$, we are fine.

Our process follows in many parts~\cite[\S\S 4-5]{Hel11}. The key ideas of the proof are essentially as in Section~\ref{se:de-cl(g)}. We find a suitable map from $T^{\ell+1}$ to $G$, given in \eqref{eq:suitmaptorus}, proving first in Section~\ref{se:de-torus-linind} that its generic fibre will be $0$-dimensional. Then we find a variety $E$ containing all large fibre components, and we bound its degree: this is the role played by Proposition~\ref{pr:ybound}, where in the end $E=\tilde{Y}|_{\vec{x}=\vec{g}}$ for some suitable $\vec{g}\notin\tilde{X}$. Finally, from this construction we reach our estimate in Theorem~\ref{th:torusbound}: the result already appears for $G=\mathrm{SL}_{n}$ as~\cite[Cor.~5.14]{Hel11}, though without explicit constants.

\subsection{Linear independence}\label{se:de-torus-linind}

Fix any non-maximal torus $T$, and recall that $\ell=\frac{\dim(G)}{r}$. We want to find a ``good'' $\vec{g}=(g_{1},\ldots,g_{\ell})\in G^{\ell}$, in the sense that the Lie algebra $\mathfrak{t}$ of $T$ and its adjoint spaces $\mathrm{Ad}_{g_{i}}(\mathfrak{t})$ are all \textit{linearly independent}; in other words, the only linear combination $v+v_{1}+\ldots+v_{\ell}$ equal to $0$ for $v\in\mathfrak{t}$ and $v_{i}\in\mathrm{Ad}_{g_{i}}(\mathfrak{t})$ is the trivial one. Here we establish that there is at least one such $\vec{g}$.

Recall the definition of canonical torus from Definition~\ref{de:canontorus}.
For canonical non-maximal tori, we can explicitly construct a linearly independent combination of spaces.

\begin{proposition}\label{pr:torusoneg}
Let $G=G_{n}<\mathrm{GL}_{N}$ be an untwisted classical group of rank $r\geq 2$ over $\mathbb{F}_{q}$, with $\ell=\frac{\dim(G)}{r}$, and let $T$ be a canonical non-maximal torus of $G$. Assume that $\mathrm{char}(\mathbb{F}_{q})\nmid 2N$.

Then, there is a tuple $\vec{g}=(g_{1},\ldots,g_{\ell})\in\mathfrak{g}^{\ell}(\mathbb{F}_{q})$ such that the spaces $\mathfrak{t},[g_{1},\mathfrak{t}],\ldots,$ $[g_{\ell},\mathfrak{t}]$ are linearly independent and of dimension $\dim(\mathfrak{t})$.
\end{proposition}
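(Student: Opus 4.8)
The plan is to reduce the problem to a dimension count on the Lie algebra side and then exhibit explicit elements $g_i \in \mathfrak g(\mathbb F_q)$ that do the job. First I would note that, since $T$ is a canonical non-maximal torus, $\mathfrak t \subsetneq \mathfrak t_{\max}$ is cut out by a single equation $\sum_i \eta_i a_i = 0$ with $\eta_n \neq 0$, so $\dim(\mathfrak t) = r-1$. The condition we want --- that $\mathfrak t, [g_1,\mathfrak t], \dots, [g_\ell,\mathfrak t]$ are linearly independent and each of full dimension $r-1$ --- is a Zariski-open condition on the tuple $\vec g \in \mathfrak g^\ell$: the set of bad tuples is the zero locus of the relevant maximal minors of the matrix whose columns span the candidate sum. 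So it suffices to show this open set is nonempty over $\overline{\mathbb F_q}$ and then descend to $\mathbb F_q$ via Corollary~\ref{co:langweil}(\ref{co:langweilstick}) (which is exactly why the hypothesis $q \geq e^{6r\log(2r)}$ does not appear here but the escape/$\mathbb F_q$-point machinery is available), or simply by a direct count since $|\mathfrak g(\mathbb F_q)| = q^{\dim G}$ is large compared to the degree of the bad locus.

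The heart of the matter is producing one good tuple. Here I would follow~\cite[\S\S 4--5]{Hel11}: since $\mathfrak t$ has codimension $1$ inside $\mathfrak t_{\max}$, and $\mathfrak t_{\max}$ itself together with $\ell-1$ generic adjoint translates $\mathrm{Ad}_{g_i}(\mathfrak t_{\max})$ already spans $\mathfrak g$ (this is the span statement used in the proof of Proposition~\ref{pr:clg}, coming from~\cite[Lemma~5.2, Prop.~4.13]{Hel11}), one expects $\ell$ translates of the slightly smaller $\mathfrak t$ to suffice — one extra copy compensating for the lost dimension. Concretely, I would take $g_1, \dots, g_{\ell-1}$ to be (lifts to $\mathfrak g$ of) elements realizing the generic spanning configuration for $\mathfrak t_{\max}$, and choose $g_\ell$ so that $[g_\ell, \mathfrak t]$ contributes a vector not already in $\mathfrak t + \sum_{i<\ell}[g_i,\mathfrak t]$; the dimension bookkeeping is $(r-1) + (\ell-1)(r-1) \geq \dim(\mathfrak g) = \ell r$ once one checks equality/surjectivity is forced, so in fact linear \emph{independence} (not just spanning) is what the dimension count delivers. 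The canonical form of $T$ from Definition~\ref{de:canontorus} makes this explicit: in each of the four cases $\mathfrak t_{\max}$ is a concrete coordinate subspace, $\mathfrak t$ is one hyperplane in it, and the brackets $[g,\mathfrak t]$ are computed by straightforward matrix multiplication, so one can simply write down permutation-type or root-vector-type elements $g_i$ and verify independence by inspecting the resulting vectors; the hypothesis $\mathrm{char}(\mathbb F_q) \nmid 2N$ guarantees no unexpected degeneracies (e.g. the nondegeneracy of the relevant forms and the Killing form, as invoked in Proposition~\ref{pr:clg}).

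The main obstacle, I expect, is not the existence of \emph{some} good configuration — the generic statement from~\cite{Hel11} essentially hands that over — but rather making it \emph{canonical and explicit enough} to carry the degree bounds needed downstream (Theorem~\ref{th:torusbound}), and handling the four classical types uniformly. In particular one must be careful that ``$\eta_n \neq 0$'' in Definition~\ref{de:canontorus} is genuinely used: it ensures the cut-out hyperplane $\mathfrak t$ interacts with the last coordinate block, which is the block one perturbs with $g_\ell$; without it the torus might be ``aligned'' with the $g_i$ chosen for the other blocks and independence could fail. So the real work is a case-by-case (but short) verification that the explicit $g_i$ chosen — adapted to the block structure of $\mathfrak{sl}_n$, $\mathfrak{so}_{2n}$, $\mathfrak{so}_{2n+1}$, $\mathfrak{sp}_{2n}$ respectively — yield $\ell+1$ subspaces in direct sum inside $\mathfrak g$, using $r \geq 2$ to guarantee there is room to maneuver (for $r=1$ every torus is maximal and the statement is vacuous, hence the standing assumption).
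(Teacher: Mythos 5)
There is a genuine gap at the heart of your argument. Your dimension bookkeeping is arithmetically false: with $\ell+1$ spaces $\mathfrak{t},[g_{1},\mathfrak{t}],\ldots,[g_{\ell},\mathfrak{t}]$, each of dimension $r-1$, the total is $(\ell+1)(r-1)=\ell r-(\ell+1-r)<\ell r=\dim(\mathfrak{g})$ for every classical type (the paper itself relies on the inequality $(\ell+1)\dim(\mathfrak{t})<\dim(\mathfrak{g})$ in Proposition~\ref{pr:ybound}). So the configuration can never span $\mathfrak{g}$, no spanning or surjectivity argument is available, and linear independence must be verified directly — it is not ``what the dimension count delivers.'' This also undermines your reduction to the maximal-torus result: restricting the $\ell-1$ generic translates of $\mathfrak{t}_{\max}$ to the hyperplane $\mathfrak{t}$ does give $\ell$ independent copies, but you then need a further $(r-1)$-dimensional space $[g_{\ell},\mathfrak{t}]$ whose intersection with their $\ell(r-1)$-dimensional sum is zero; choosing $g_{\ell}$ so that $[g_{\ell},\mathfrak{t}]$ merely ``contributes a vector not already in the sum'' is far weaker than what is required, and the existence of such a $g_{\ell}$ within the constrained family of spaces of the form $[g,\mathfrak{t}]$ is exactly the kind of claim that needs proof. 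The ``short case-by-case verification'' you defer to is, in fact, the entire content of the proposition.

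For comparison, the paper's route is different and fully explicit: it embeds the Lie algebra of the next-smaller group, $\mathfrak{g}_{n-1}\hookrightarrow\mathfrak{g}_{n}$, and applies \cite[Lemma~5.2]{Hel11} to the maximal Cartan subalgebra $\mathfrak{t}'$ of $\mathfrak{g}_{n-1}$ — which matches $\mathfrak{t}$ in dimension precisely because $\eta_{n}\neq 0$ makes $a_{n}$ dependent on $a_{1},\ldots,a_{n-1}$ — to supply $\ell-3$ (resp.\ $\ell-2$) of the required elements. The remaining two or three elements are written down by hand, with nonzero bracket entries confined to the last rows and columns, and independence is checked by showing that all coordinates $a_{j,i}$ of the torus elements can be reconstructed from the entries of $\sum_{j}[h_{j},t_{j}]$; this forces sub-cases on the coefficients (e.g.\ whether $\mathfrak{t}$ is $\{a_{n}=0\}$ or has some $\eta_{i_{0}}\neq 0$ with $i_{0}<n$, and for $\mathrm{Sp}_{2n}$ whether $\eta_{n}+\sum_{i\neq n}\eta_{i}$ or $\eta_{n}-\sum_{i\neq n}\eta_{i}$ is nonzero). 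Note also that no $\mathbb{F}_{q}$-point descent is needed at this stage — the constructed elements have integer entries — and that the passage from $[g_{i},\mathfrak{t}]$ with $g_{i}\in\mathfrak{g}$ to $\mathrm{Ad}_{g_{i}}(\mathfrak{t})$ with $g_{i}\in G(\mathbb{F}_{q})$, which your write-up conflates, is carried out separately in Corollary~\ref{co:torusoneg} using \cite[Prop.~4.13]{Hel11} and Corollary~\ref{co:langweil}.
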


\begin{proof}
We prove the statement case by case. For $G_{n}=\mathrm{SL}_{n}$ (and $\mathrm{char}(\mathbb{F}_{q})\nmid N$), this is \cite[Lemma~5.12]{Hel11}: the result is proved for the more common representation of $\mathrm{SL}_{n}$, but it extends naturally to $\mathrm{SL}_{n}$ as in Definition~\ref{de:ambient} by constructing the spaces in the upper left corner.

Let $G_{n}=\mathrm{SO}_{2n}^{+}$, for which $\ell=2n-1$. Its Lie algebra $\mathfrak{so}_{2n}^{+}$ is given in \S\ref{se:chev}. We may embed a copy of $\mathfrak{so}_{2n-2}^{+}$ inside $\mathfrak{so}_{2n}^{+}$ as
\begin{align*}
\iota & :\mathfrak{so}_{2n-2}^{+}\rightarrow\mathfrak{so}_{2n}^{+}, & \iota\begin{pmatrix} x_{11} & x_{12} \\ x_{21} & x_{22} \end{pmatrix} & =\left(
\begin{array}{cc|cc}
x_{11} & \vec{0} & x_{12} & \vec{0} \\
\vec{0} & 0 & \vec{0} & 0 \\
\hline
x_{21} & \vec{0} & x_{22} & \vec{0} \\
\vec{0} & 0 & \vec{0} & 0 \\
\end{array}
\right).
\end{align*}
Let $\mathfrak{t}'$ be the Lie algebra of the canonical maximal torus of $\mathfrak{so}_{2n-2}^{+}$, and recall that it is the same as the diagonal Cartan subalgebra of $\mathfrak{so}_{2n-2}^{+}$. We apply \cite[Lemma~5.2]{Hel11} to $\mathfrak{so}_{2n-2}^{+}$; the result holds since $\mathrm{char}(\mathbb{F}_{q})\neq 2$, and it can be used whenever $2n-2\geq 2$ (by our definition $n\geq 4$, so we are fine). Therefore, there are $2n-4=\ell-3$ elements $g_{1},\ldots,g_{\ell-3}\in\mathfrak{so}_{2n-2}^{+}$ such that $\mathfrak{t}',[g_{1},\mathfrak{t}'],\ldots,[g_{\ell-3},\mathfrak{t}']$ are linearly independent inside $\mathfrak{so}_{2n-2}^{+}$ and of dimension $\dim(\mathfrak{t}')$. Since $\sum_{i}\eta_{i}a_{i}=0$ with $\eta_{n}\neq 0$, $a_{n}$ is linearly dependent on $a_{1},\ldots,a_{n-1}$ inside $\mathfrak{t}$: this means that the $a_{1},\ldots,a_{n-1}$ are linearly independent in $\mathfrak{t}$, and that $\iota([g_{i},\mathfrak{t}'])=[\iota(g_{i}),\mathfrak{t}]$. This implies that the spaces $\mathfrak{t},[\iota(g_{1}),\mathfrak{t}],\ldots,[\iota(g_{\ell-3}),\mathfrak{t}]$ are also linearly independent and of dimension $\dim(\mathfrak{t}')=\dim(\mathfrak{t})$.

We need $3$ more elements. Call $v$ the $(n-1)\times 1$ matrix having all entries $1$, and call $e_{i}$ the $(n-1)\times 1$ matrix having $1$ at the $i$-th entry and $0$ elsewhere. Then, since $n\geq 4$ we can take
\begin{align*}
h_{1} & =\left(
\begin{array}{cc|cc}
\vec{0} & v & \vec{0} & \vec{0} \\
\vec{0} & 0 & \vec{0} & 0 \\
\hline
\vec{0} & e_{2} & \vec{0} & \vec{0} \\
-e_{2}^{\top} & 0 & -v^{\top} & 0 \\
\end{array}
\right), &
h_{2} & =\left(
\begin{array}{cc|cc}
\vec{0} & \vec{0} & \vec{0} & \vec{0} \\
-v^{\top} & 0 & \vec{0} & 0 \\
\hline
\vec{0} & e_{3} & \vec{0} & v \\
-e_{3}^{\top} & 0 & \vec{0} & 0 \\
\end{array}
\right), \\
h_{3} & =\left(
\begin{array}{cc|cc}
\vec{0} & e_{1} & \vec{0} & v \\
-e_{1}^{\top} & 0 & -v^{\top} & 0 \\
\hline
\vec{0} & \vec{0} & \vec{0} & \vec{0} \\
\vec{0} & 0 & \vec{0} & 0 \\
\end{array}
\right). & &
\end{align*}
The spaces $[h_{j},\mathfrak{t}]$ have matrices whose nonzero entries are concentrated in the $n$-th and $2n$-th rows and columns, so their span is orthogonal to $\iota(\mathfrak{so}_{2n-2}^{+})$. We only need to verify that they are linearly independent among each other and of dimension $\dim(\mathfrak{t})$. Given $t_{j}\in\mathfrak{t}$ defined by entries $a_{j,i}$, the sum $x=[h_{1},t_{1}]+[h_{2},t_{2}]+[h_{3},t_{3}]$ has entries
\begin{align*}
x_{2n,n+i} & =a_{1,i}-a_{1,n} \ \ \ (1\leq i<n), & x_{n+2,n} & =a_{1,2}+a_{1,n}, \\
x_{n+i,2n} & =a_{2,i}-a_{2,n} \ \ \ (1\leq i<n), & x_{n+3,n} & =a_{2,3}+a_{2,n}, \\
x_{n,n+i} & =a_{3,i}+a_{3,n} \ \ \ (1\leq i<n), & x_{1,n} & =-a_{1,1}-a_{3,1}+a_{1,n}+a_{3,n}.
\end{align*}
Since $\mathrm{char}(\mathbb{F}_{q})\neq 2$ we can reconstruct all the entries of the $t_{j}$ from $x$, and we are done. This completes the case $G_{n}=\mathrm{SO}_{2n}^{+}$.

Let $G_{n}=\mathrm{SO}_{2n+1}$, for which $\ell=2n+1$. Its Lie algebra $\mathfrak{so}_{2n+1}$ is again given in \S\ref{se:chev}. We may embed a copy of $\mathfrak{so}_{2n}^{+}$ inside $\mathfrak{so}_{2n+1}$ as
\begin{align*}
\iota & :\mathfrak{so}_{2n}^{+}\rightarrow\mathfrak{so}_{2n+1}, & \iota\begin{pmatrix} x_{11} & x_{12} \\ x_{21} & x_{22} \end{pmatrix} & =\left(
\begin{array}{cc|c}
x_{11} & x_{12} & \vec{0} \\
x_{21} & x_{22} & \vec{0} \\
\hline
\vec{0} & \vec{0} & 0
\end{array}
\right).
\end{align*}
The images via $\iota$ of the $\ell-2$ elements of $\mathfrak{so}_{2n}^{+}$ we found before can be used for $\mathfrak{so}_{2n+1}$ too, so it is sufficient to find $2$ more elements. Call $v$ the $n\times 1$ matrix having all entries $1$. Then we can take
\begin{align*}
h_{1} & =\left(
\begin{array}{cc|c}
\vec{0} & \vec{0} & v \\
\vec{0} & \vec{0} & \vec{0} \\
\hline
\vec{0} & -v^{\top} & 0
\end{array}
\right), &
h_{2} & =\left(
\begin{array}{cc|c}
\vec{0} & \vec{0} & \vec{0} \\
\vec{0} & \vec{0} & v \\
\hline
-v^{\top} & \vec{0} & 0
\end{array}
\right).
\end{align*}
The matrices $[h_{1},t],[h_{2},t]$ have nonzero entries only in the last row and column. Moreover, $a_{j}$ appears as $x_{2n+1,n+j}$ in $[h_{1},t]$, and as $x_{n+j,2n+1}$ in $[h_{2},t]$. This completes the case $G_{n}=\mathrm{SO}_{2n+1}$.

Finally, let $G_{n}=\mathrm{Sp}_{2n}$, for which $\ell=2n+1$. Its Lie algebra $\mathfrak{sp}_{2n}$ is the set of matrices $\begin{pmatrix} A & B \\ C & -A^{\top} \end{pmatrix}$ with $B^{\top}=B$ and $C^{\top}=C$. We may embed a copy of $\mathfrak{sp}_{2n-2}$ inside $\mathfrak{sp}_{2n}$ as
\begin{align*}
\iota & :\mathfrak{sp}_{2n-2}\rightarrow\mathfrak{sp}_{2n}, & \iota\begin{pmatrix} A & B \\ C & -A^{\top} \end{pmatrix} & =\left(
\begin{array}{cc|cc}
A & \vec{0} & B & \vec{0} \\
\vec{0} & 0 & \vec{0} & 0 \\
\hline
C & \vec{0} & -A^{\top} & \vec{0} \\
\vec{0} & 0 & \vec{0} & 0
\end{array}
\right).
\end{align*}
As in the case of $\mathrm{SO}_{2n}^{+}$, by \cite[Lemma~5.2]{Hel11} we can take $\ell-3$ elements from $\mathfrak{sp}_{2n-2}$ whose images via $\iota$ are suitable elements for our $\mathfrak{t}$. We need $3$ more.

Since $\eta_{n}\neq 0$, we must have either $\eta_{n}+\sum_{i\neq n}\eta_{i}\neq 0$ or $\eta_{n}-\sum_{i\neq n}\eta_{i}\neq 0$. Assume first the former inequality. Then, for $i=1,2,3$ we define $h_{i}=\begin{pmatrix} A_{i} & B_{i} \\ 0 & -A_{i}^{\top} \end{pmatrix}$ with
\begin{align*}
A_{1}=A_{2}^{\top} & =\begin{pmatrix} \vec{0} & \vec{1} \\ \vec{0} & 0 \end{pmatrix}, & B_{3} & =\begin{pmatrix} \vec{0} & \vec{1} \\ \vec{1} & 1 \end{pmatrix}, & A_{3}=B_{1}=B_{2} & =0.
\end{align*}
The set of coordinates with nonzero entries of each $[h_{i},t]$ is disjoint from the others and from those of the already considered elements, so we only need to prove that $\dim([h_{i},\mathfrak{t}])=\dim(\mathfrak{t})$. For $[h_{3},t]$, we have $x_{j,2n}=-a_{j}-a_{n}$ for $j\geq n$: thus, since $\mathrm{char}(\mathbb{F}_{q})\neq 2$ we get $a_{n}$ from $x_{n,2n}$ and then all other $a_{j}$. For $[h_{1},t]$, we have $x_{j,n}=a_{n}-a_{j}$ for $j<n$, and analogous entries are found in $[h_{2},t]$. Define the map $\varphi_{+}:\mathbb{A}^{n}\rightarrow\mathbb{A}^{n}$ as $\varphi_{+}(\vec{y})=(y_{n}-y_{1},\ldots,y_{n}-y_{n-1})$, and note that $\dim(\varphi_{+}(\mathbb{A}^{n}))=\dim(\mathbb{A}^{n-1})=n-1=\dim(\mathfrak{t})$: as long as $\varphi_{+}(V)=\mathbb{A}^{n-1}$ for $V=\{\vec{y}\in\mathbb{A}^{n}:\sum_{i}\eta_{i}y_{i}=0\}$, we are settled. For $\vec{s}\in\mathbb{A}^{n-1}$, an element of the fibre $\varphi_{+}^{-1}(\vec{s})\cap V$ has to have $y_{j}=y_{n}+s_{j}$ for all $j<n$, and it must satisfy $\sum_{i}\eta_{i}y_{i}=\eta_{n}y_{n}+\sum_{i\neq n}\eta_{i}(y_{n}+s_{i})=0$ too; but there is always a $y_{n}$ satisfying such an equation, since $\eta_{n}+\sum_{i\neq n}\eta_{i}\neq 0$ by hypothesis, so $\varphi_{+}(V)=\mathbb{A}^{n-1}$ and we are done.

Assume now instead that $\eta_{n}+\sum_{i\neq n}\eta_{i}=0$, which implies $\eta_{n}-\sum_{i\neq n}\eta_{i}\neq 0$. In this case we define $h_{i}=\begin{pmatrix} A_{i} & B_{i} \\ C_{i} & -A_{i}^{\top} \end{pmatrix}$ with
\begin{align*}
A_{1}=A_{2}^{\top} & =\begin{pmatrix} \vec{0} & \vec{1} \\ \vec{0} & 0 \end{pmatrix}, & B_{3} & =\begin{pmatrix} \vec{0} & \vec{1} \\ \vec{1} & 1 \end{pmatrix}, & B_{1}=C_{2} & =\begin{pmatrix} \vec{0} & \vec{0} \\ \vec{0} & 1 \end{pmatrix}, & A_{3}=B_{2}=C_{1}=C_{3} & =0.
\end{align*}
For $[h_{1},t]$, we have $x_{j,n}=a_{n}-a_{j}$ for $j<n$ and $x_{n,2n}=-2a_{n}$. For $[h_{2},t]$, we have $x_{n,j}=a_{j}-a_{n}$ for $j<n$ and $x_{2n,n}=2a_{n}$. For $[h_{3},t]$, we have $x_{j,2n}=-a_{n}-a_{j}$ for $j<n$. Defining $\varphi_{-}:\mathbb{A}^{n}\rightarrow\mathbb{A}^{n}$ as $\varphi_{-}(\vec{y})=(y_{n}+y_{1},\ldots,y_{n}+y_{n-1})$, we repeat the passages as in the previous case and find that $\varphi_{-}(V)=\mathbb{A}^{n-1}$ using $\eta_{n}-\sum_{i\neq n}\eta_{i}\neq 0$. This completes the case $G_{n}=\mathrm{Sp}_{2n}$.
\end{proof}

From the result above, we get an analogous statement for arbitrary non-maximal tori.

\begin{corollary}\label{co:torusoneg}
Let $G=G_{n}<\mathrm{GL}_{N}$ be an untwisted classical group of rank $r\geq 2$ over $\mathbb{F}_{q}$, with $\ell=\frac{\dim(G)}{r}$, and let $T$ be a non-maximal torus of $G$, with Lie algebra $\mathfrak{t}$. Assume that $q\geq e^{3r\log(2r)}$ and $\mathrm{char}(\mathbb{F}_{q})>N$.

Then there is a tuple $\vec{g}=(g_{1},\ldots,g_{\ell})\in G^{\ell}(\mathbb{F}_{q})$ such that the spaces $\mathfrak{t},\mathrm{Ad}_{g_{1}}(\mathfrak{t}),\ldots,$ $\mathrm{Ad}_{g_{\ell}}(\mathfrak{t})$ are linearly independent and of dimension $\dim(\mathfrak{t})$.
\end{corollary}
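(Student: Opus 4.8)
The plan is to deduce the corollary from Proposition~\ref{pr:torusoneg} in two moves: first, passing from an arbitrary non-maximal torus to a canonical one (Definition~\ref{de:canontorus}), a geometric step carried out over $\overline{\mathbb{F}_q}$; then converting the bracket statement ``$\mathfrak{t},[x_1,\mathfrak{t}],\ldots,[x_\ell,\mathfrak{t}]$ linearly independent'' into the adjoint statement ``$\mathfrak{t},\mathrm{Ad}_{g_1}(\mathfrak{t}),\ldots,\mathrm{Ad}_{g_\ell}(\mathfrak{t})$ linearly independent'' with $g_i\in G(\mathbb{F}_q)$. The governing constraint is that every existence claim must be realized over $\mathbb{F}_q$ with $q\geq e^{6r\log(2r)}$, so all auxiliary varieties must have degree polynomial in $r$; this forces the descent to $\mathbb{F}_q$ to be carried out inside the \emph{linear} space $\mathfrak{g}^\ell$ — where brackets depend linearly on the coordinates and the ambient variety has degree $1$ — rather than inside $G^\ell$, where the factor $\deg(G)^\ell$, exponentially large in $r$, would enter.

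For the first move, pick a maximal torus $S\supseteq T$. Since all maximal tori of $G$ are conjugate over $\overline{\mathbb{F}_q}$, there is $g\in G(\overline{\mathbb{F}_q})$ with $\mathrm{Ad}_g(\mathrm{Lie}(S))=\mathfrak{t}_{\max}$; the subtorus $gTg^{-1}\subseteq T_{\max}$ corresponds to a saturated sublattice of the character lattice $X^{*}(T_{\max})$, so $\mathrm{Ad}_g(\mathfrak{t})$ is cut out inside $\mathfrak{t}_{\max}=\{(a_1,\ldots,a_n)\}$ by finitely many integer equations $\sum_i\eta_i a_i=0$ (this uses $\mathrm{char}(\mathbb{F}_q)>N$), at least one of them nontrivial since $\dim\mathfrak{t}<r$. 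Conjugating further by an element of $N_G(T_{\max})$ realizing a permutation of coordinates — permutations lie in the Weyl group of each of our $G$ — we may assume one such equation has $\eta_n\neq 0$, so, after absorbing that conjugation into $g$, it defines a canonical non-maximal torus $T''$ with $\mathrm{Ad}_g(\mathfrak{t})\subseteq\mathfrak{t}''$. Proposition~\ref{pr:torusoneg} applied to $T''$ yields $x_1'',\ldots,x_\ell''\in\mathfrak{g}(\mathbb{F}_q)$ with $\mathfrak{t}'',[x_1'',\mathfrak{t}''],\ldots,[x_\ell'',\mathfrak{t}'']$ linearly independent of dimension $\dim\mathfrak{t}''$, and since $\mathrm{Ad}_{g^{-1}}$ is a Lie algebra automorphism and $\mathfrak{t}\subseteq\mathrm{Ad}_{g^{-1}}(\mathfrak{t}'')$, the tuple $\bigl(\mathrm{Ad}_{g^{-1}}(x_1''),\ldots,\mathrm{Ad}_{g^{-1}}(x_\ell'')\bigr)$ shows that the Zariski-open locus $\mathcal{W}\subseteq\mathfrak{g}^\ell$ of tuples $\vec{y}$ for which $\mathfrak{t},[y_1,\mathfrak{t}],\ldots,[y_\ell,\mathfrak{t}]$ are linearly independent of dimension $\dim\mathfrak{t}$ — an open set defined over $\mathbb{F}_q$, as $\mathfrak{t}$ is — is nonempty over $\overline{\mathbb{F}_q}$.

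For the second move, first descend $\mathcal{W}$ to $\mathbb{F}_q$: its complement in $\mathfrak{g}^\ell$ lies in $\{\mu=0\}$ for some maximal minor $\mu$ of the matrix whose rows are a fixed $\mathbb{F}_q$-basis of $\mathfrak{t}$ together with the vectors $[y_i,\cdot]$ on that basis, and these entries are linear in the $y_i$, so $\deg(\mu)\leq\ell\dim(\mathfrak{t})<\ell r$. Proposition~\ref{pr:lwnotq} applied inside $\mathfrak{g}^\ell\cong\mathbb{A}^{\ell\dim G}$ then gives $|\{\mu=0\}(\mathbb{F}_q)\cap\mathfrak{g}^\ell|<\ell r\,q^{\ell\dim G-1}<q^{\ell\dim G}=|\mathfrak{g}^\ell(\mathbb{F}_q)|$ (using $q>\ell r$), so $\mathcal{W}(\mathbb{F}_q)\neq\emptyset$; fix $\vec{x}=(x_1,\ldots,x_\ell)\in\mathcal{W}(\mathbb{F}_q)$. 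To pass to the group, for $G\neq\mathrm{SL}_n$ I would use the Cayley map $\lambda(x)=(\mathrm{Id}_N-x)(\mathrm{Id}_N+x)^{-1}$ from the proof of Corollary~\ref{co:langweil}, with $\mathrm{Ad}_{\lambda(tx_i)}(v)=v-2t[x_i,v]+O(t^2)$: subtracting the constant $\mathfrak{t}$-rows and dividing the remaining $\ell\dim\mathfrak{t}$ rows by $t$, a suitable maximal minor of the matrix built from $\mathfrak{t}$ and the spaces $\mathrm{Ad}_{\lambda(tx_i)}(\mathfrak{t})$ specializes at $t=0$ to the nonzero minor above, so — cleared of the denominators $\det(\mathrm{Id}_N\pm tx_i)$ — it is a one-variable polynomial in $t$ of degree polynomial in $r$ that does not vanish identically. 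Choosing $t\in\mathbb{F}_q$ outside its zero set and those of the denominators (possible since $q\geq e^{6r\log(2r)}$ exceeds any such polynomial) gives the required tuple $(\lambda(tx_1),\ldots,\lambda(tx_\ell))\in G^\ell(\mathbb{F}_q)$. For $G=\mathrm{SL}_n$, where there is no Cayley map for $n>3$, I would instead use the explicit rational curve $t\mapsto(\mathrm{Id}_n+tx_i)\,\mathrm{diag}(1,\ldots,1,\det(\mathrm{Id}_n+tx_i)^{-1})$, which lies in $\mathrm{SL}_n$ and passes through the identity with derivative $x_i$ (since $\mathrm{tr}(x_i)=0$), and argue identically. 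The rank-one case is excluded here; it is dealt with separately in Corollary~\ref{co:torusnonrs}.

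The step I expect to be the main obstacle is the second move: transporting a purely linear-algebraic fact about brackets to a statement about $\mathrm{Ad}$ of genuine group elements \emph{over $\mathbb{F}_q$} without the degrees exploding. Routing it through a one-parameter family keeps everything of degree polynomial in $r$, which is precisely what makes the hypothesis $q\geq e^{6r\log(2r)}$ sufficient, whereas a direct count of $\mathbb{F}_q$-points in $G^\ell$ would drag in the hopeless factor $\deg(G)^\ell$; the absence of a Cayley map for $\mathrm{SL}_n$ is what forces the separate explicit parametrization in that case.
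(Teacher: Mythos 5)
Your proposal is correct, and while the reduction to a canonical torus mirrors the paper's first step, your passage from the Lie-algebra bracket statement to the group-level adjoint statement over $\mathbb{F}_{q}$ is genuinely different. The paper invokes~\cite[Prop.~4.13]{Hel11} to conclude that linear independence of $\mathfrak{t},[g'_{i},\mathfrak{t}]$ for one tuple in $\mathfrak{g}^{\ell}$ forces linear independence of $\mathfrak{t},\mathrm{Ad}_{g_{i}}(\mathfrak{t})$ for all $\vec{g}$ outside a subvariety $X\subsetneq G^{\ell}$ cut out by $\theta\wedge\mathrm{Ad}_{g_{1}}(\theta)\wedge\ldots\wedge\mathrm{Ad}_{g_{\ell}}(\theta)=0$ of degree $\leq N\ell$, and then finds an $\mathbb{F}_{q}$-point of $G^{\ell}\setminus X$ via Corollary~\ref{co:langweil}\eqref{co:langweilstick} (whose proof is itself where the Cayley map and the Lie-algebra point count live). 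You instead do the $\mathbb{F}_{q}$-descent entirely inside the linear space $\mathfrak{g}^{\ell}$, where Proposition~\ref{pr:lwnotq} applies with no $\deg(G)^{\ell}$ loss, and then manufacture the group elements by hand along the curve $t\mapsto(\lambda(tx_{1}),\ldots,\lambda(tx_{\ell}))$: the rank condition degenerates at $t=0$ (all $\mathrm{Ad}_{e}(\mathfrak{t})$ coincide with $\mathfrak{t}$), but your row-reduction-and-divide-by-$t$ step correctly exhibits a univariate polynomial of degree $O(N\dim G)$ whose nonvanishing at $t=0$ — guaranteed by the bracket statement — gives admissible $t\in\mathbb{F}_{q}$; the explicit $\mathrm{SL}_{n}$ parametrization is the right substitute where no Cayley map exists. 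What your route buys is self-containedness (no appeal to~\cite[Prop.~4.13]{Hel11} or to the wedge-product variety) and a cleaner handling of rationality: since you conjugate back by $\mathrm{Ad}_{g^{-1}}$ \emph{before} descending to $\mathbb{F}_{q}$, you avoid the issue that the paper's ``up to conjugation'' reductions are a priori only over $\overline{\mathbb{F}_{q}}$. Your saturated-sublattice observation also correctly replaces the paper's Frobenius-power trick for ensuring the differential $\sum_{i}\eta_{i}a_{i}$ is not identically zero in characteristic $p$. The only cost is that you restrict the final tuple to a one-parameter family rather than a generic point of $G^{\ell}$, but the corollary only asserts existence, so nothing is lost.
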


\begin{proof}
Let $T_{\max}$ be a maximal torus of $G$ containing $T$. By \cite[Lemma~4.14]{Hel11}, there is a non-trivial character $\alpha:T_{\max}\rightarrow\mathbb{A}^{1}$ such that $T\subseteq\ker(\alpha)$: $\alpha$ is a map of the form $\alpha(x_{1},\ldots,x_{\ell})=\prod_{i}x_{i}^{\eta_{i}}$ for some $\eta_{i}\in\mathbb{Z}$ not all $0$, where the $x_{i}$ are the free parameters defining $T_{\max}$. Without loss of generality, we may assume that $T=\ker(\alpha)$, as linear independence for $\ker(\alpha)$ and its adjoints implies linear independence for their subspaces; up to conjugation, we may also assume that $T_{\max}$ is canonical as in Definition~\ref{de:canontorus}, since all maximal tori are conjugate in $G$. Then, the Lie algebra $\mathfrak{t}$ lies inside the kernel $\mathfrak{k}$ of the corresponding character $\bar{\alpha}:\mathfrak{t}_{\max}\rightarrow\mathbb{A}^{1}$ given by $\bar{\alpha}(a)=\sum_{i}\eta_{i}a_{i}$; up to conjugation again, we can reorder the $a_{i}$ so as to have $\eta_{n}\neq 0$.

We may have $\bar{\alpha}$ identically $0$ on $\mathbb{F}_{q}$, so we avoid this case by reasoning as follows. Let $p=\mathrm{char}(\mathbb{F}_{q})$, and let $p^{e}$ be the highest power of $p$ dividing all the $\eta_{i}$: then $\alpha(x)=\alpha'(x)^{p^{e}}$, where $\alpha'(x)=\prod_{i}x_{i}^{\eta'_{i}}$ is a non-trivial character with $p\nmid\mathrm{gcd}(\eta'_{1},\ldots,\eta'_{n})$. The new character $\bar{\alpha}'$ has the same kernel $\mathfrak{k}$, because $y\mapsto y^{p^{e}}$ is a power of the Frobenius automorphism of $\mathbb{F}_{q}$ and thus $\alpha(x)=1$ if and only if $\alpha'(x)=1$. Therefore we may work with $\bar{\alpha}'$ instead of $\bar{\alpha}$, and $\sum_{i}\eta'_{i}a_{i}$ is not identically $0$ anymore.

By the description above, $T$ is canonical. Since in particular $p\nmid 2N$, by Proposition~\ref{pr:torusoneg} there is some $\vec{g}'\in\mathfrak{g}^{\ell}(\mathbb{F}_{q})$ for which the spaces $\mathfrak{k},[g'_{1},\mathfrak{k}],\ldots,[g'_{\ell},\mathfrak{k}]$ are linearly independent and of dimension $\dim(\mathfrak{k})$, so the same holds for $\mathfrak{t},[g'_{1},\mathfrak{t}],\ldots,[g'_{\ell},\mathfrak{t}]$. Then, since $p>\dim(\mathfrak{t})$, the previous assertion implies that for every $\vec{g}\in G^{\ell}(\overline{\mathbb{F}_{q}})$ outside a proper subvariety $X\subsetneq G^{\ell}$ the spaces $\mathfrak{t},\mathrm{Ad}_{g_{1}}(\mathfrak{t}),\ldots,\mathrm{Ad}_{g_{\ell}}(\mathfrak{t})$ are linearly independent and of dimension $\dim(\mathfrak{t})$ by~\cite[Prop.~4.13]{Hel11}.

It remains to find some $\vec{g}$ inside $G^{\ell}(\mathbb{F}_{q})$ specifically. We have $p>r$, so we can use~\cite[(4.14)]{Hel11} to define $X$: thus, $X$ is defined by the equation $\theta\wedge\mathrm{Ad}_{g_{1}}(\theta)\wedge\ldots\wedge\mathrm{Ad}_{g_{\ell}}(\theta)=0$, where $\theta$ is the exterior product of a basis of $\mathfrak{t}$. Since $\mathrm{Ad}_{g}(x)$ has degree $2$ in the entries of $g$, and since $x\wedge y$ has degree equal to the sum of the degrees of $x,y$, $X$ is defined by finitely many equations $f_{i}=0$ with $\deg(f_{i})\leq 2\ell$; at least one $f_{i_{0}}$ gives a proper subvariety of $G^{\ell}$, or else $X$ itself would not be proper. Therefore we can actually work with $\tilde{X}=\{f_{i_{0}}=0\}$. By Corollary~\ref{co:langweil}\eqref{co:langweilstick}, there exists a $\vec{g}\in G^{\ell}(\mathbb{F}_{q})\setminus\tilde{X}(\mathbb{F}_{q})$.
\end{proof}

\subsection{Dimensional estimates}

We start with a technical lemma. Recall the definition of non-singular derivative from Section~\ref{se:de-cl(g)}.

\begin{lemma}\label{le:carvefibre}
Let $f:X\rightarrow\mathbb{A}^{m}$ be a morphism. Let $E\subseteq X$ be such that $Df|_{x=x_{0}}$ is non-singular for all $x_{0}\in X\setminus E$. Then, for any $x_{0}\in X\setminus E$, we have
\begin{align*}
\dim(\overline{f^{-1}(f(x_{0}))\setminus E}) & =0, \\
\deg(\overline{f^{-1}(f(x_{0}))\setminus E}) & \leq\deg(X)\mdeg(f)^{m}.
\end{align*}
\end{lemma}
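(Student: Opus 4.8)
The plan is to combine a generic-smoothness / inverse-function-theorem argument at the level of tangent spaces with a B\'ezout bound on the fibre, just as in the proof of Lemma \ref{le:zarimdeg}. First I would fix $x_{0}\in X\setminus E$ and set $y_{0}=f(x_{0})$. The fibre $F=f^{-1}(y_{0})$ is the intersection of $X$ with the $m$ hypersurfaces cut out by $\overline{f}_{i}-(y_{0})_{i}=0$, where $\overline{f}=(\overline{f}_{1},\dots,\overline{f}_{m})$ is a choice of polynomial map on the ambient affine space extending $f$ with $\deg(\overline{f}_{i})\leq\deg(f)$; hence by B\'ezout
\begin{equation*}
\deg(F)\leq\deg(X)\deg(f)^{m}.
\end{equation*}
This already gives the degree bound \emph{for the whole fibre}, so in particular for the subvariety $\overline{F\setminus E}$ (a subvariety of a pure-dimensional-part-wise thing of this degree has degree at most this; more carefully, $\overline{F\setminus E}$ is a union of some of the irreducible components of $F$, so its degree is at most $\deg(F)$). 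The content of the lemma is therefore the \emph{dimension} statement: that $\overline{F\setminus E}$ is $0$-dimensional.

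For the dimension claim, the key point is that at every point $x\in F\setminus E$ the tangent space $T_{x}F$ is trivial. Indeed $T_{x}F\subseteq\ker(Df|_{x})$ (the fibre over a point maps to a single point, so its tangent vectors are killed by $Df$), and by hypothesis $Df|_{x}$ is non-singular for $x\in X\setminus E\supseteq F\setminus E$, so $\ker(Df|_{x})=0$ and hence $T_{x}F=0$. A variety all of whose points on a dense open subset are isolated (have trivial tangent space, hence are smooth of dimension $0$) cannot have a component of positive dimension meeting that open subset: if $Y$ were an irreducible component of $F$ with $\dim(Y)\geq 1$ and $Y\not\subseteq E$, then $Y\setminus E$ would be a nonempty open subset of $Y$, so it would contain a smooth point of $Y$, at which the tangent space of $F$ (which contains the tangent space of $Y$) would have dimension $\geq\dim(Y)\geq 1$, contradicting $T_{x}F=0$. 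Hence every component of $F$ meeting $X\setminus E$ is a point, so $\overline{F\setminus E}$ is a finite union of points, i.e.\ $\dim(\overline{F\setminus E})=0$.

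I expect the main (minor) obstacle to be bookkeeping about characteristic and reducedness: the inclusion $T_{x}F\subseteq\ker(Df|_{x})$ and the identification of ``trivial tangent space'' with ``isolated point'' are cleanest for reduced schemes over a perfect field, but since we are working with varieties in the sense fixed in Section \ref{se:prelim} (and ultimately over $\overline{\mathbb{F}_{q}}$, which is perfect) this is fine; one just needs to phrase the tangent-space computation so that it does not secretly assume $F$ reduced. An alternative, which sidesteps this entirely, is the route of Lemma \ref{le:zarimdeg}: cut $X$ by generic hyperplanes until $\dim(X)=\dim(\overline{f(X)})$, note that then a generic fibre is finite, and observe that the non-singularity of $Df$ off $E$ forces $\dim(\overline{f(X)})=\dim(X)$ already without cutting, so that \emph{every} fibre through $X\setminus E$ is finite off $E$; the degree bound is then exactly the B\'ezout estimate above. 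I would present the tangent-space argument as the main proof and mention this as the quick alternative.
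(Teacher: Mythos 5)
Your proof is correct and follows essentially the same route as the paper's: the tangent space of the fibre at a point of $X\setminus E$ lies in $\ker(Df)$, so non-singularity forces the fibre off $E$ to be $0$-dimensional, and B\'ezout applied to $X$ together with the $m$ equations $\overline{f}_{i}=f_{i}(x_{0})$ gives the degree bound. The only cosmetic difference is that the paper argues directly via $\dim(T_{x_{0}}Z)\geq\dim(Z)$ at the given point of an irreducible component $Z$, whereas you pass to a smooth point of a component $Y$ not contained in $E$ (which tacitly uses that $E$ is closed, so that $Y\setminus E$ is dense in $Y$ and meets the smooth locus); since $E$ is a variety in all applications, this is harmless.
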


\begin{proof}
Fix some $x_{0}\in X\setminus E$ (if $E=X$ the statement is empty), and assume that $x_{0}$ sits on an irreducible component $Z$ of $\overline{f^{-1}(f(x_{0}))\setminus E}$ of dimension at least $1$. Then, any nonzero vector $v\in TZ|_{x=x_{0}}$ must have $Df|_{x=x_{0}}(v)=0$, since it is tangent to the fibre of $f$ at $x_{0}$, which contradicts non-singularity at $x_{0}$.

It remains to bound the degree. For a $0$-dimensional object we can write
\begin{equation*}
\overline{f^{-1}(f(x_{0}))\setminus E}=f^{-1}(f(x_{0}))\setminus E=f^{-1}(f(x_{0}))\setminus(f^{-1}(f(x_{0}))\cap E),
\end{equation*}
so we have reduced ourselves to a situation in which $Z=Z_{1}\setminus Z_{2}$ with all $Z,Z_{1},Z_{2}$ being varieties and with $Z_{1}\supseteq Z_{2}$: in this case we must have $Z_{1}=Z\sqcup Z_{2}$, and then $\deg(Z)\leq\deg(Z_{1})$. Thus, the degree is bounded by
\begin{equation*}
\deg(f^{-1}(f(x_{0})))=\deg(\{x\in X:f(x)=f(x_{0})\})\leq\deg(X)\mdeg(f)^{m},
\end{equation*}
and we are done.
\end{proof}

Let $\phi:G^{\ell}\times T^{\ell+1}\rightarrow G$ be the map given by
\begin{equation*}
\phi(\vec{x},\vec{y})=\phi((x_{1},x_{2},\ldots,x_{\ell}),(y,y_{1},y_{2},\ldots,y_{\ell}))=y\cdot x_{1}y_{1}x_{1}^{-1}\cdot x_{2}y_{2}x_{2}^{-1}\cdot\ldots\cdot x_{\ell}y_{\ell}x_{\ell}^{-1}.
\end{equation*} 
For fixed $\vec{g}\in G^{\ell}$, we write $\phi_{\vec{g}}:T^{\ell+1}\rightarrow G$ for the map defined by $\phi_{\vec{g}}(\vec{y})=\phi(\vec{g},\vec{y})$. As mentioned before, for all $\vec{g}\in G^{\ell}$ and $\vec{t}\in T^{\ell+1}$ the map $\phi_{\vec{g}}$ defines a derivative
\begin{align*}
(D\phi_{\vec{g}})|_{\vec{y}=\vec{t}} & :TT^{\ell+1}|_{\vec{y}=\vec{t}}\rightarrow TG|_{\vec{z}=\phi_{\vec{g}}(\vec{t})}, & (D\phi_{\vec{g}})|_{\vec{y}=\vec{t}}(v) & =\left.\frac{\partial}{\partial\vec{y}}\phi_{\vec{g}}(\vec{y})\right|_{\vec{y}=\vec{t}}(v).
\end{align*}
Call
\begin{equation}\label{eq:ygt}
Y_{G^{\ell}\times T^{\ell+1}}=\{(\vec{g},\vec{t})\in G^{\ell}\times T^{\ell+1}:(D\phi_{\vec{g}})|_{\vec{y}=\vec{t}}\text{ is singular}\}.
\end{equation}

Let $\psi:G^{\ell}\times T^{\ell+1}\rightarrow G^{\ell}$ be the map defined by $\psi(\vec{x},\vec{y})=\vec{x}'=(x'_{1},x'_{2},\ldots,x'_{\ell})$ where $x'_{\ell}=x_{\ell}$ and $x'_{j}=x'_{j+1}y^{-1}_{j+1}x^{-1}_{j+1}x_{j}$ for $1\leq j\leq\ell-1$. Note that, if we call $\psi_{\vec{t}}:G^{\ell}\rightarrow G^{\ell}$ the map given by $\psi_{\vec{t}}(\vec{x})=\psi(\vec{x},\vec{t})$, then ${\psi_{\vec{t}}}^{-1}=\psi_{\vec{t}^{-1}}$.

Finally, define the map
\begin{align*}
\rho_{(\vec{g},\vec{t})} & :TT^{\ell+1}|_{\vec{y}=\vec{e}}\rightarrow TG|_{z=e}, & \rho_{(\vec{g},\vec{t})}(v) & =\left.\left(\frac{\partial}{\partial\vec{y}}\phi(\psi(\vec{g},\vec{t}),\vec{y})\right)\right|_{\vec{y}=\vec{e}}(v).
\end{align*}
By the definition of $\psi$, in practice we are moving $(D\phi_{\vec{g}})|_{\vec{y}=\vec{t}}$ to the tangent spaces at the identity, so that we can work with the derivative map while having domain and codomain independent from $\vec{g},\vec{t}$.

The following proposition is closely related to~\cite[Prop.~4.9]{Hel11}.

\begin{proposition}\label{pr:ybound}
Let $G<\mathrm{GL}_{N}$ be an untwisted classical group of rank $r\geq 2$ over $\mathbb{F}_{q}$, with $\ell=\frac{\dim(G)}{r}$, and let $T$ be a non-maximal torus of $G$. Assume that $q\geq e^{3r\log(2r)}$ and $\mathrm{char}(\mathbb{F}_{q})>N$.

Then, there are varieties $\tilde{X}\subseteq\mathbb{A}^{N^{2}\ell},\tilde{Y}\subseteq\mathbb{A}^{N^{2}(2\ell+1)}$ defined over $\mathbb{F}_{q}$ such that, calling $X=\tilde{X}\cap G^{\ell},Y=\tilde{Y}\cap(G^{\ell}\times T^{\ell+1})$, the following properties are satisfied:
\begin{enumerate}[(i)]
\item $X\subsetneq G^{\ell}$ and $Y\subsetneq G^{\ell}\times T^{\ell+1}$,
\item\label{pr:yboundnonsing} $Y\supseteq Y_{G^{\ell}\times T^{\ell+1}}$, where the latter is as in~\eqref{eq:ygt},
\item $Y|_{\vec{y}=\vec{t}}\subsetneq G^{\ell}$ for any $\vec{t}\in T^{\ell+1}$,
\item\label{pr:yboundprop} $(Y|_{\vec{x}=\vec{g}})(\mathbb{F}_{q})\subsetneq T^{\ell+1}(\mathbb{F}_{q})$ for any $\vec{g}\in G^{\ell}\setminus X$, and in particular $\vec{e}\not\in(Y|_{\vec{x}=\vec{g}})(\mathbb{F}_{q})$,
\item\label{pr:yboundlw} $\tilde{X}=\{x\in\mathbb{A}^{N^{2}\ell}:F(x)=0\}$ for some polynomial $F$ with $\deg(F)\leq 2(r-1)$, and
\item\label{pr:ybounddeg} $\deg(\tilde{Y}|_{\vec{x}=\vec{g}})\leq\frac{1}{2}\ell(\ell-1)(r-1)$ for any $\vec{g}\in G^{\ell}$.
\end{enumerate}
\end{proposition}

\begin{proof}
Observe that $(D\phi_{\bar{g}})|_{\vec{y}=\bar{t}}$ is non-singular if and only if $\rho_{(\bar{g},\bar{t})}$ is non-singular. We can rewrite the map $\rho_{(\bar{g},\bar{t})}$ as a linear map between Lie algebras; since
\begin{equation*}
\phi(\psi(\bar{g},\bar{t}),\vec{t})=t\cdot\bar{g}'_{1}t_{1}\bar{g}'^{-1}_{1}\cdot\bar{g}'_{2}t_{2}\bar{g}'^{-1}_{2}\cdot\ldots\cdot\bar{g}'_{\ell}t_{\ell}\bar{g}'^{-1}_{\ell}
\end{equation*}
and $(\ell+1)\dim(\mathfrak{t})<\dim(\mathfrak{g})$, the requirement of a non-singular $\rho_{(\bar{g},\bar{t})}$ translates into asking for the union of the bases of $\mathfrak{t},\bar{g}'_{1}\mathfrak{t}\bar{g}'^{-1}_{1},\ldots,\bar{g}'_{\ell}\mathfrak{t}\bar{g}'^{-1}_{\ell}$ to be a linearly independent set in $\mathfrak{g}$, which means that the spaces have all dimension $\dim(\mathfrak{t})$ and they are transverse to each other (i.e.\ linearly independent). By Corollary~\ref{co:torusoneg} we know that for the choice $\bar{t}=\vec{e}=(e,e,\ldots,e)$ there are $\bar{g}_{i}\in G(\mathbb{F}_{q})$ for which the spaces are of dimension $\dim(\mathfrak{t})$ and transverse: in fact, by definition $\bar{g}_{i}=\bar{g}'_{i}$ when $\bar{t}$ is the identity tuple. Being transverse means that, fixing a basis of $\mathfrak{t}$, there is at least one nonzero minor in the $((\ell+1)\dim(\mathfrak{t}),N^{2})$-matrix defined by the bases of $\mathfrak{t},\bar{g}_{1}\mathfrak{t}\bar{g}^{-1}_{1},\ldots,\bar{g}_{\ell}\mathfrak{t}\bar{g}^{-1}_{\ell}$, where $N^{2}$ is the dimension of the ambient space for the corresponding $\mathfrak{g}$.

Let $\bar{t}=\vec{e}$, and let $\bar{g}$ be a fixed tuple for which we have transverse spaces of maximal dimension. Fix one specific $((\ell+1)\dim(\mathfrak{t}),(\ell+1)\dim(\mathfrak{t}))$-submatrix $M$ as above whose corresponding minor $F$ is nonzero. We define the variety
\begin{equation*}
\tilde{X}_{(1)}=\{g_{1}\in\mathbb{A}^{N^{2}}:F(g_{1},\bar{g}_{2},\ldots,\bar{g}_{\ell})=0\},
\end{equation*}
where the $\bar{g}_{2},\ldots,\bar{g}_{\ell}$ are treated like fixed constants; as we have already observed, $X_{(1)}=\tilde{X}_{(1)}\cap G$ is proper in $G$, so we set $\tilde{X}=\tilde{X}_{(1)}\times \mathbb{A}^{N^{2}(\ell-1)}$ and $X=X_{(1)}\times G^{\ell-1}\subsetneq G^{\ell}$.

Since any $x^{-1}\in G$ has entries that are polynomial in the entries of $x$, we can extend $\psi$ to a map $\tilde{\psi}:\mathbb{A}^{N^{2}(2\ell+1)}\rightarrow\mathbb{A}^{N^{2}\ell}$. The pullback $\tilde{Y}=\tilde{\psi}^{-1}(\tilde{X})$ is defined by the same $F$ in the corresponding spaces $\bar{g}'_{i}\mathfrak{t}\bar{g}'^{-1}_{i}$, and $Y=\tilde{Y}\cap(G^{\ell}\times T^{\ell+1})=\psi^{-1}(X)$ is proper in $G^{\ell}\times T^{\ell+1}$. First of all $Y\supseteq Y_{G^{\ell}\times T^{\ell+1}}$, since for the latter by definition all minors in the collection from which we picked $M$ are zero. Moreover, since $Y|_{\vec{y}=\vec{t}}={\psi_{\vec{t}}}^{-1}(X)=\psi_{\vec{t}^{-1}}(X)$ for every $\vec{t}$, the properness of $X$ implies that $Y|_{\vec{y}=\vec{t}}$ is proper in $G^{\ell}$ as well. By definition, if $\vec{g}\not\in X$ then $(D\phi_{\vec{g}})|_{\vec{y}=\vec{e}}$ is non-singular, so $(Y|_{\vec{x}=\vec{g}})(\mathbb{F}_{q})$ is proper in $T^{\ell+1}(\mathbb{F}_{q})$ because it does not contain $\vec{e}$.

Finally, let us deal with degrees. $M$ has $\dim(\mathfrak{t})$ rows that are subrows of the space $g_{0}\mathfrak{t}g_{0}^{-1}$: thus, their entries are polynomials of degree $2$ in the entries of $g_{0}$, since the inverse map has degree $1$; the other rows of $M$ depend only on the constants $\bar{g}_{i}$, so $\deg(F)\leq 2\dim(\mathfrak{t})\leq 2(r-1)$. Consider then $\tilde{Y}|_{\vec{x}=\vec{g}}$, where $\vec{g}=(g_{1},g_{2},\ldots,g_{\ell})$: it is defined by $F(g'_{1},\bar{g}'_{2},\ldots,\bar{g}'_{\ell})=0$, where by the definition of $\psi$ we have $\bar{g}'_{\ell}=\bar{g}_{\ell}$, $\bar{g}'_{j}=\bar{g}'_{j+1}y_{j+1}^{-1}\bar{g}_{j+1}^{-1}\bar{g}_{j}$ for $1<j\leq\ell-1$, and $g'_{1}=\bar{g}'_{2}y_{2}^{-1}\bar{g}_{2}^{-1}g_{1}$ (thus, the $g_{i}$ for $i>1$ do not matter). The $\dim(\mathfrak{t})$ rows that come from the space $\bar{g}'_{i}\mathfrak{t}\bar{g}'^{-1}_{i}$ have entries that are polynomial of degree $\ell-i$ in the entries of $\vec{y}$; similarly, the entries of the rows of $g'_{1}\mathfrak{t}g'^{-1}_{1}$ are polynomial of degree $\ell$ in the entries of $\vec{y}$, and finally the entries of the rows of $\mathfrak{t}$ are constant, so
\begin{equation*}
\deg(\tilde{Y}|_{\vec{x}=\vec{g}})\leq\sum_{i=1}^{\ell}(\ell-i)\dim(\mathfrak{t})\leq\frac{1}{2}\ell(\ell-1)(r-1),
\end{equation*}
and we are done.
\end{proof}

Thanks to the properties of $\tilde{X},\tilde{Y},X,Y$ proved in Proposition~\ref{pr:ybound}, we show our main result of the section. For technical reasons, we work with a set $B$ generating a conjugate of $G(\mathbb{F}_{q})$ instead of generating $G(\mathbb{F}_{q})$ itself: this is because we can more easily characterize elements that are not regular semisimple inside the canonical diagonal torus, and then shift to any other torus by conjugation.

\begin{theorem}\label{th:torusbound}
Let $G<\mathrm{GL}_{N}$ be an untwisted classical group of rank $r\geq 2$ over $K=\mathbb{F}_{q}$, with $\ell=\frac{\dim(G)}{r}$. Assume that $q\geq e^{3r\log(2r)}$ and $\mathrm{char}(\mathbb{F}_{q})>N$. Fix some $h\in G(\overline{K})$, and let $B\subseteq G(\overline{K})$ be a set of generators of $hG(K)h^{-1}$ with $e\in B=B^{-1}$.

Take a non-maximal torus $T$ of $G$ of degree $\leq r2^{r}$, contained in the canonical maximal torus of Definition~\ref{de:canontorus}. Then, for any $t\geq 1$ we have $|B^{t}\cap T(K)|\leq C_{1}|B^{C_{2}}|^{\frac{1}{\ell+1}}$, with $C_{1}\leq\frac{1}{r(r+1)}(2r)^{10r}$ and $C_{2}\leq(2r)^{14r^{2}}+4rt$.
\end{theorem}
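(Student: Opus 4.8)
The plan is to imitate the reconstruction argument behind Theorem~\ref{th:cl(g)-bound}, with the assembly map $\phi\colon G^{\ell}\times T^{\ell+1}\to G$, $\phi(\vec x,\vec y)=y\cdot x_{1}y_{1}x_{1}^{-1}\cdots x_{\ell}y_{\ell}x_{\ell}^{-1}$, and the varieties $\tilde X,\tilde Y,X,Y$ of Proposition~\ref{pr:ybound} playing the roles that $f$ and $E$ played there. After conjugating we may assume $T$ is a canonical non-maximal torus; that $B$ generates a conjugate $hG(\mathbb{F}_q)h^{-1}$ of $G(\mathbb{F}_q)$ rather than $G(\mathbb{F}_q)$ itself is precisely the device legitimising this reduction, since the non-regular-semisimple elements of the diagonal torus are the transparent ones. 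The input I will use from Proposition~\ref{pr:ybound} is that, because $(\ell+1)\dim T<\dim G$, for every $\vec g\in G^{\ell}\setminus X$ the derivative of $\phi_{\vec g}\colon T^{\ell+1}\to G$ is non-singular off the proper subvariety $Y|_{\vec x=\vec g}\subsetneq T^{\ell+1}$ (non-singularity at $\vec e$ being exactly the linear independence of $\mathfrak t,\mathrm{Ad}_{g_{1}}\mathfrak t,\dots,\mathrm{Ad}_{g_{\ell}}\mathfrak t$ provided by Corollary~\ref{co:torusoneg}); hence, by Lemma~\ref{le:carvefibre}, every fibre of $\phi_{\vec g}$ lying outside $Y|_{\vec x=\vec g}$ is zero-dimensional of degree at most $\deg(T^{\ell+1})\deg(\phi_{\vec g})^{N^{2}}$, which by $\deg(T)\le 2\deg(G)$ and Table~\ref{ta:basicg} is of size $(2r)^{O(r^{3})}$.

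First I would fix a good tuple $\vec g$. Property~\eqref{pr:yboundlw} of Proposition~\ref{pr:ybound} says $\tilde X$ is cut out by one polynomial $\mu$ with $\deg(\mu)\le N(r-1)\le N\ell$, so Corollary~\ref{co:langweil}\eqref{co:langweilstick} --- the point where $q\ge e^{6r\log(2r)}$ is consumed --- produces an $\mathbb{F}_q$-point of $G^{\ell}\setminus X$, and feeding it into the escape machinery of Section~\ref{se:escape} (applied to $B$ acting linearly on $\mathbb{A}^{N^{2}\ell}$) yields such a $\vec g$ already in $B^{s}$ with $s$ bounded through \eqref{eq:mk} in terms of $\deg(\mu)$. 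For this $\vec g$ we have $\vec e\notin Y|_{\vec x=\vec g}$, so $Y|_{\vec x=\vec g}$ is genuinely proper in the irreducible variety $T^{\ell+1}$, with degree controlled by property~(vi) of Proposition~\ref{pr:ybound} and B\'ezout.

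Now the count. For $\vec y=(y,y_{1},\dots,y_{\ell})\in(B^{t}\cap T(K))^{\times(\ell+1)}$ the element $\phi_{\vec g}(\vec y)$ lies in $B^{(\ell+1)t+2\ell s}$, and to turn the generic finiteness of $\phi_{\vec g}$ into an honest bound I would run the descending coordinate-by-coordinate induction of Theorem~\ref{th:cl(g)-bound}: put $W_{j}(\vec z;y_{j+1},\dots,y_{\ell})=\{y_{j}\in T\mid(z_{0},\dots,z_{j-1},y_{j},\dots,y_{\ell})\in Y|_{\vec x=\vec g}\}$, prove by descending induction on $j$ that each $W_{j}$ is a proper subvariety of $T$ (the base case being the properness of $Y|_{\vec x=\vec g}$ established above, propagated exactly as in Theorem~\ref{th:cl(g)-bound}), and apply Proposition~\ref{pr:strong-escape} at each stage to drive the $j$-th coordinate out of $W_{j}$. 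This replaces $\vec y$ by a tuple $w(\vec y)\notin Y|_{\vec x=\vec g}$ with $\phi_{\vec g}(w(\vec y))\in B^{C_{2}}$, and $\vec y$ is then reconstructed uniquely from $(\phi_{\vec g}(w(\vec y)),\eta,\vec h)$, with $\eta$ indexing the bounded fibre of $\phi_{\vec g}$ and $\vec h$ recording the escape choices. This yields $|B^{t}\cap T(K)|^{\ell+1}\le(\text{bounded})\cdot|B^{C_{2}}|$, and the desired inequality follows after an $(\ell+1)$-th root; collecting the fibre degree $(2r)^{O(r^{3})}$, the number of escape indices per coordinate (at most $\deg(W_{j})^{\dim W_{j}+1}=(2r)^{O(r^{3})}$ by Remark~\ref{re:degesc} and property~(vi) of Proposition~\ref{pr:ybound}), and the $\ell+1$ coordinates gives $C_{1}\le\frac{1}{r(r+1)}(2r)^{19r^{2}}$ and $C_{2}\le(2r)^{45r^{3}-1}t$.

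The main obstacle is, as in Theorem~\ref{th:cl(g)-bound}, making the escape from the bad locus $Y|_{\vec x=\vec g}$ uniform over all the tuples being counted while keeping the total number of escape steps at $(2r)^{O(r^{3})}$; it is harder here on two counts. Conjugation no longer acts transitively on the ambient variety, so one must establish properness of the slices $W_{j}$ by hand and perform the escape inside $T$ itself, which is the part of the argument that follows \cite[\S\S 4-5]{Hel11}; and one must carry the conjugating element $h$ through the whole computation, since $B$ generates only a conjugate of $G(\mathbb{F}_q)$. A secondary but unavoidable point is that the escape steps here use group elements of word-length $\Theta(t)$ in $B$ rather than $\Theta(1)$, which is exactly why $C_{2}$ is multiplicative, rather than additive, in $t$.
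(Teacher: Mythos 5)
Your proposal is correct and matches the paper's proof in all its main ingredients: the map $\phi_{\vec g}$, Proposition~\ref{pr:ybound}, the selection of $\vec g\in(B^{k})^{\times\ell}$ via Corollary~\ref{co:langweil}\eqref{co:langweilstick} followed by escape (conjugating by $\bar h=(h,\dots,h)$ to carry the twist by $h$), Lemma~\ref{le:carvefibre} for the fibre bound, and escape elements of word-length $\Theta(t)$ forcing $C_{2}$ to be multiplicative in $t$. The one place you diverge is the escape from the bad locus $E=\tilde Y|_{\vec x=\vec g}$: you propose a coordinate-by-coordinate descent with slices $W_{j}\subseteq T$ and a hand-made propagation of properness, whereas the paper applies Proposition~\ref{pr:strong-escape} once to all of $E\subseteq T^{\ell+1}$, with the group $T_{0}=\langle (B^{t}\cap T(K))^{\times(\ell+1)}\rangle$ acting on itself by left multiplication. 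This sidesteps the transitivity worry you raise: a nonempty subset of the group $T_{0}$ invariant under left multiplication by all of $T_{0}$ is $T_{0}$ itself, so properness of $E(K)\cap T_{0}$ --- immediate from $\vec e\notin E$, i.e.\ property (iv) of Proposition~\ref{pr:ybound} --- already forces $T_{0}\cap\bigcap_{i}\vec b_{i}E(K)=\emptyset$, and no per-coordinate properness or reconstruction bookkeeping is needed. Your descent does go through (taking $\vec z=\vec e$ as the base tuple makes each slice miss $e$, so the invariant intersection inside $\langle B^{t}\cap T(K)\rangle$ is again empty), and its per-coordinate escape counts are governed by the degree of a slice rather than of the full $E$, so with care it yields constants of at least the same quality; the paper's one-shot version is simply shorter.
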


\begin{proof}
For any finite set $S$, we write $S^{\times k}$ for the set $S\times\ldots\times S$ ($k$ times), so as to distinguish it from $S^{k}$. We have $\langle B^{\times\ell}\rangle=\langle B\rangle^{\times\ell}=(hG(K)h^{-1})^{\times\ell}=(hGh^{-1})^{\ell}(K)$. As $T(K)$ is also a group, call $\vec{e}_{i}$ the $i$-tuple $(e,\ldots,e)$ and define
\begin{align*}
B_{0} & =\bigcup_{i=0}^{\ell}\{\vec{e}_{i}\}\times(B\cap T(K))\times\{\vec{e}_{\ell-i}\}, \\
B_{1} & =(B^{t}\cap T(K))^{\times(\ell+1)}, & & (B_{0})^{t}\subseteq B_{1}\subseteq(B_{0})^{(\ell+1)t}, \\
T_{0} & =\langle B_{0}\rangle=\langle B_{1}\rangle.
\end{align*}
We have $T_{0}\leq T^{\ell+1}(K)$ and $\vec{e}=\vec{e}_{\ell+1}\in B_{i}=B_{i}^{-1}$ for $i=0,1$.

First, we select an appropriate tuple of elements of $G$. Let $\tilde{X},\tilde{Y},X,Y$ be as in Proposition~\ref{pr:ybound}: all of them are defined over $K$, and in particular we have $\tilde{X}\cap G^{\ell}=X\subsetneq G^{\ell}$. Call $\bar{h}=(h,h,\ldots,h)\in G^{\ell}(\overline{K})$. Set $V=\bar{h}^{-1}\tilde{X}\bar{h}$, a variety defined over $\overline{K}$ with $V\cap G^{\ell}=\bar{h}^{-1}X\bar{h}\subsetneq G^{\ell}$: by Proposition~\ref{pr:ybound}\eqref{pr:yboundlw}, $V$ satisfies the hypotheses of Corollary~\ref{co:langweil}\eqref{co:langweilstick}. We obtain that
\begin{align*}
 & \ G^{\ell}(K)\supsetneq(\bar{h}^{-1}X\bar{h})(K) \\
\Rightarrow & \ \bar{h}G^{\ell}(K)\bar{h}^{-1}\supsetneq\bar{h}(\bar{h}^{-1}X\bar{h})(K)\bar{h}^{-1}=\bar{h}\bar{h}^{-1}X(\overline{K})\bar{h}\bar{h}^{-1}\cap\bar{h}G^{\ell}(K)\bar{h}^{-1} \\
\Rightarrow & \ \bar{h}G^{\ell}(K)\bar{h}^{-1}\not\subseteq X(\overline{K}).
\end{align*}
Then we escape using Shitov, which is more convenient than either Corollary~\ref{co:escape} or Ariadne's cookbook since the degree of $\tilde{X}$ is very small. Up to a degree-preserving isomorphism, we may think of $G^{\ell}$ as sitting diagonally inside $\mathrm{Mat}_{N\ell}$. Apply then Proposition~\ref{pr:shitov} with $(K,A,N,V)=(\overline{K},B^{\times\ell},N\ell,\tilde{X})$, and with $D=2(r-1)$ by Proposition~\ref{pr:ybound}\eqref{pr:yboundlw}. Thus, there exists $\vec{g}\in(B^{k})^{\times\ell}$ such that $\vec{g}\not\in X(K)$, where
\begin{equation*}
k<22(r-1)(N\ell+1)^{2(r-1)}\log(N\ell).
\end{equation*}

For this choice of $\vec{g}$, define $E=\tilde{Y}|_{\vec{x}=\vec{g}}\cap L^{\ell+1}$, where $L$ is as in \eqref{eq:canoncontain}. Hence
\begin{equation}\label{eq:enonsing}
E\supseteq\tilde{Y}|_{\vec{x}=\vec{g}}\cap T^{\ell+1}=Y|_{\vec{x}=\vec{g}},
\end{equation}
which implies that $(D\phi_{\vec{g}})|_{\vec{y}=\vec{t}}$ is non-singular for any $\vec{t}\not\in E$ by Proposition~\ref{pr:ybound}\eqref{pr:yboundnonsing}. Moreover $E(K)\cap T_{0}\subsetneq T_{0}$, because of Proposition~\ref{pr:ybound}\eqref{pr:yboundprop} and $\vec{e}\in B_{0}\subseteq B_{1}\subseteq T_{0}$. Finally, \eqref{eq:canoncontain} and Proposition~\ref{pr:ybound}\eqref{pr:ybounddeg} gives
\begin{align*}
\dim(E) & \leq(\ell+1)\dim(L)\leq N(\ell+1), & \deg(E) & \leq\deg(\tilde{Y}|_{\vec{x}=\vec{g}})\leq\frac{1}{2}\ell(\ell-1)(r-1).
\end{align*}

By Proposition~\ref{pr:strong-escape}, there are elements $\vec{e}=\vec{b}_{0},\vec{b}_{1},\ldots,\vec{b}_{m_{1}}$ of $B_{0}^{m_{2}}$ such that $\bigcap_{0\leq i\leq m_{1}}\vec{b}_{i}E$ is invariant under the action of $T_{0}$ by left multiplication and
\begin{equation*}
m_{1},m_{2}\leq\left(1+\frac{1}{\deg(E)-1}\right)\deg(E)^{\dim(E)+1}\leq\frac{3}{2}\left(\frac{1}{2}\ell(\ell-1)(r-1)\right)^{N(\ell+1)+1}.
\end{equation*}
This implies that $T_{0}\cap\bigcap_{0\leq i\leq m_{1}}\vec{b}_{i}E(K)$ is also $T_{0}$-invariant, and since $E(K)\cap T_{0}\subsetneq T_{0}$ it must be that
\begin{equation*}
T_{0}\cap\bigcap_{0\leq i\leq m_{1}}\vec{b}_{i}E(K)=\emptyset.
\end{equation*}
Hence, each element of $B_{1}$ is outside $\vec{b}_{i}E(K)$ for some $i$, implying
\begin{align}
|B^{t}\cap T(K)|^{\ell+1} & =|B_{1}|\leq\sum_{i=0}^{m_{1}}|B_{1}\cap(T_{0}\setminus\vec{b}_{i}E(K))| \nonumber \\
 & \leq (m_{1}+1)|B_{1}\cap(T_{0}\setminus\vec{b}_{i_{0}}E(K))|, \label{eq:torusaway}
\end{align}
where $i_{0}$ is the index for which the intersection above has maximal size.

Say $\vec{g}=(g_{1},\ldots,g_{\ell})$ and $\vec{b}_{i_{0}}=(b_{0},b_{1},\ldots,b_{\ell})$. Consider now the map $\xi:T^{\ell+1}\rightarrow G$ defined as follows:
\begin{equation}\label{eq:suitmaptorus}
\xi(t_{0},t_{1},t_{2},\ldots,t_{\ell})=b_{0}^{-1}t_{0}\cdot g_{1}b_{1}^{-1}t_{1}g_{1}^{-1}\cdot g_{2}b_{2}^{-1}t_{2}g_{2}^{-1}\cdot\ldots\cdot g_{\ell}b_{\ell}^{-1}t_{\ell}g_{\ell}^{-1}.
\end{equation}
By definition, $\xi$ is the composition of the left multiplication by $\vec{b}_{i_{0}}^{-1}$ and of $\phi_{\vec{g}}$. By \eqref{eq:enonsing}, $(D\phi_{\vec{g}})|_{\vec{y}=\vec{t}}$ is non-singular for any $\vec{t}\not\in E$, so $\xi^{-1}(\xi(\vec{t}))\setminus\vec{b}_{i_{0}}E$ is $0$-dimensional for all $\vec{t}\in T^{\ell+1}\setminus\vec{b}_{i_{0}}E$ by Lemma~\ref{le:carvefibre}. Furthermore, $\xi$ has been chosen so that $\xi(B_{1})\subseteq B^{m_{2}+(\ell+1)t+2k\ell}$. Set
\begin{equation*}
C_{2}=m_{2}+(\ell+1)t+2k\ell\leq(2r)^{14r^{2}}+4rt,
\end{equation*}
where the inequality can be verified directly for $r\leq 20$, say, and deduced from $k\leq m_{2}\leq(2r)^{13r^{2}}$ otherwise. We obtain
\begin{align}
|B_{1}\cap(T_{0}\setminus\vec{b}_{i_{0}}E(K))| & =\sum_{z\in B^{C_{2}}}|\xi^{-1}(z)\cap B_{1}\cap (T_{0}\setminus\vec{b}_{i_{0}}E(K))| \nonumber \\
 & \leq|B^{C_{2}}|\cdot\max_{\vec{t}\in T^{\ell+1}\setminus\vec{b}_{i_{0}}E}|B_{1}\cap(\xi^{-1}(\xi(\vec{t}))\setminus\vec{b}_{i_{0}}E)(K)| \nonumber \\
 & \leq|B^{C_{2}}|\cdot\max_{\vec{t}\in T^{\ell+1}\setminus\vec{b}_{i_{0}}E}\deg(\overline{\xi^{-1}(\xi(\vec{t}))\setminus\vec{b}_{i_{0}}E}), \nonumber \\
 & \leq|B^{C_{2}}|\cdot\max_{\vec{t}\in T^{\ell+1}\setminus\vec{b}_{i_{0}}E}\deg(T)^{\ell+1}\mdeg(\xi)^{N^{2}}, \label{eq:torusimfi}
\end{align}
where the last inequality of \eqref{eq:torusimfi} holds by Lemma~\ref{le:carvefibre}.

Finally, note that $\deg(T)\leq r2^{r}$ by definition of $T$, and that $\mdeg(\xi)=\mdeg(\phi_{\vec{g}})=\ell+1$. Putting these estimates together with \eqref{eq:torusaway}--\eqref{eq:torusimfi}, we obtain
\begin{equation*}
|B^{t}\cap T(K)|\leq(m_{1}+1)^{\frac{1}{\ell+1}}\cdot r2^{r}\cdot(\ell+1)^{\frac{N^{2}}{\ell+1}}\cdot|B^{C_{2}}|^{\frac{1}{\ell+1}}<\frac{(2r)^{10r}}{r(r+1)}|B^{C_{2}}|^{\frac{1}{\ell+1}},
\end{equation*}
with the last inequality true by Table~\ref{ta:basicg}.
\end{proof}

The result above, as we mentioned before, allows for an easy generalization to any torus, so that we get the following consequence.

\begin{corollary}\label{co:torusnonrs}
Let $G=G_{n}<\mathrm{GL}_{N}$ be an untwisted classical group of rank $r$ over $K=\mathbb{F}_{q}$. Assume that $q\geq e^{3r\log(2r)}$ and that $\mathrm{char}(\mathbb{F}_{q})>N$. Let $A$ be a set of generators of $G(K)$ with $e\in A=A^{-1}$. Let $T$ be a maximal torus of $G$.

Call $W$ the set of elements of $G(\overline{K})$ that are not regular semisimple. Then, for any $t\geq 1$, we have $|A^{t}\cap T(K)\cap W|\leq r(r+1)C_{1}|A^{C_{2}}|^{\frac{1}{\ell+1}}$ with $C_{1},C_{2}$ as in Theorem~\ref{th:torusbound}.
\end{corollary}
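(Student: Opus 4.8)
\emph{Proof plan.} When $r=1$ the only group in question is $\mathrm{SL}_{2}$, every maximal torus is one–dimensional, and $W\cap T$ consists of at most the two central elements $\pm\mathrm{Id}$, so the bound is trivial. Assume then $r\geq 2$. The plan is first to transport the problem to the canonical maximal torus of Definition~\ref{de:canontorus}. Since all maximal tori of $G$ are conjugate over $\overline{K}=\overline{\mathbb{F}_{q}}$, fix $h\in G(\overline{K})$ with $hTh^{-1}=T_{\max}$ and set $B=hAh^{-1}$; then $B$ generates $hG(K)h^{-1}$, $e\in B=B^{-1}$, and $|B^{s}|=|A^{s}|$ for all $s$. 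Conjugation by $h$ is a length–preserving bijection which fixes $W$ setwise (it being a union of conjugacy classes), and since $B^{t}\subseteq hG(K)h^{-1}$ it carries $A^{t}\cap T(K)\cap W$ bijectively onto $B^{t}\cap T_{\max}(\overline{K})\cap W$. So it suffices to bound $\bigl|B^{t}\cap T_{\max}(\overline{K})\cap W\bigr|$.

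Next I would cover $W\cap T_{\max}$ by proper non–maximal subtori. An element $g\in T_{\max}$ is automatically semisimple, and it is regular exactly when $\alpha(g)\neq 1$ for every root $\alpha$ (its centralizer being generated by $T_{\max}$ and the root subgroups $U_{\alpha}$ with $\alpha(g)=1$). Hence $W\cap T_{\max}=\bigcup_{\alpha}\ker(\alpha)$, the union taken over one root from each pair $\{\alpha,-\alpha\}$, i.e.\ over $\frac{1}{2}(\dim(G)-r)$ characters. For $G\neq\mathrm{Sp}_{2n}$, and for the short roots of $\mathrm{Sp}_{2n}$, the character $\alpha$ is primitive and $\ker(\alpha)$ is a connected subtorus of $T_{\max}$ of dimension $r-1$; for a long root $\alpha=2e_{i}$ of $\mathrm{Sp}_{2n}$ one has $\ker(\alpha)=S_{i}\sqcup\sigma_{i}S_{i}$, where $S_{i}$ is a subtorus of dimension $r-1$ and $\sigma_{i}\in T_{\max}(\overline{K})$ is an involution (here $\mathrm{char}(\mathbb{F}_{q})>N\geq 2$ guarantees that $\ker(\alpha)$ is reduced). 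A short case check against Table~\ref{ta:basicg} shows that the resulting collection of subtori, together with the $r$ symplectic cosets $\sigma_{i}S_{i}$ when $G=\mathrm{Sp}_{2n}$, has at most $r(r+1)$ members (the maximum being attained by $\mathrm{Sp}_{2n}$, where one gets $r^{2}-r$ short–root kernels, $r$ subtori $S_{i}$, and $r$ cosets). Reading off the defining equations from Definitions~\ref{de:ambient} and~\ref{de:canontorus} and applying B\'ezout shows moreover that every subtorus $T_{j}$ occurring is non–maximal and has $\deg(T_{j})\leq 2\deg(G)$ (for instance $\deg(T_{j})\leq 2^{r}\leq 2\deg(G)$ for the orthogonal and symplectic groups, and $\deg(T_{j})\leq r+1=\deg(\mathrm{SL}_{r+1})$ for $\mathrm{SL}_{r+1}$).

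Now I would invoke Theorem~\ref{th:torusbound} (legitimately, since $r\geq 2$, $q\geq e^{6r\log(2r)}$, $\mathrm{char}(\mathbb{F}_{q})>N$, and each $T_{j}$ is a non–maximal torus of degree $\leq 2\deg(G)$). For a genuine subtorus $T_{j}$ it gives $|B^{t}\cap T_{j}(\overline{K})|\leq C_{1}|B^{C_{2}}|^{\frac{1}{\ell+1}}=C_{1}|A^{C_{2}}|^{\frac{1}{\ell+1}}$. For a symplectic coset $\sigma_{i}S_{i}$: if $B^{t}$ does not meet it there is nothing to estimate; otherwise choose any $\sigma\in B^{t}\cap\sigma_{i}S_{i}(\overline{K})$, and observe that left multiplication by $\sigma^{-1}\in B^{t}$ embeds $B^{t}\cap\sigma_{i}S_{i}(\overline{K})$ into $B^{2t}\cap S_{i}(\overline{K})$, so Theorem~\ref{th:torusbound} applied with $t$ replaced by $2t$ bounds this piece by $C_{1}|A^{C_{2}}|^{\frac{1}{\ell+1}}$ as well, the harmless doubling of $t$ being absorbed into the slack of the constants of Theorem~\ref{th:torusbound}. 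Summing over the at most $r(r+1)$ pieces yields $\bigl|B^{t}\cap T_{\max}(\overline{K})\cap W\bigr|\leq r(r+1)\,C_{1}|A^{C_{2}}|^{\frac{1}{\ell+1}}$, which is exactly the assertion.

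The conceptual content is entirely in Theorem~\ref{th:torusbound}; the work here is bookkeeping, and the one genuinely non–formal point is the symplectic long root, where $\ker(2e_{i})$ is not a torus and one must pass through a translate of the subtorus $S_{i}$. One also has to be slightly careful that the number of pieces does not exceed $r(r+1)$, so that the constant in the conclusion is really the advertised $r(r+1)C_{1}$ (with $C_{1},C_{2}$ as in Theorem~\ref{th:torusbound}) and not something larger.
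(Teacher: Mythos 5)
Your argument is correct and follows essentially the same route as the paper: conjugate $T$ to the canonical maximal torus, cover the non-regular locus of $T_{\mathrm{can}}$ by at most $r(r+1)$ pieces of degree $\leq 2\deg(G)$ cut out by one extra equation of degree $\leq 2$, apply Theorem~\ref{th:torusbound} to each piece, and dispose of $r=1$ ($G=\mathrm{SL}_2$, $W\cap T=\{\pm\mathrm{Id}_2\}$) by hand. The one place you diverge is the symplectic long roots: the paper identifies the non-regular elements via coincidences of eigenvalues and simply feeds each degree-$\leq 2$ subvariety (including the disconnected $\{x_ix_j=1\}$-type pieces) directly into Theorem~\ref{th:torusbound} as a ``torus,'' whereas you split $\ker(2e_i)=S_i\sqcup\sigma_iS_i$ and translate the non-identity coset into $S_i$ by an element of $B^{t}$, invoking the theorem with $2t$ in place of $t$. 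Your handling is arguably the more scrupulous one, since $\{x_i^2=1\}$ is indeed not connected; the only informal step is the claim that the doubling of $t$ is absorbed into $C_2=(2r)^{45r^3-1}t$, which does hold because the stated bound on $(\ell+1)(m+1)t+2k\ell$ in the proof of Theorem~\ref{th:torusbound} has far more than a factor of $2$ of slack, but you should say a word to that effect (or simply accept $2C_2$, which changes nothing downstream).
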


\begin{proof}
Write $T_{\mathrm{can}}$ for the canonical maximal torus of $G$ as in Definition~\ref{de:canontorus}. All maximal tori of $G$ are conjugate via some element over $\overline{K}$, so there is some $h\in G(\overline{K})$ such that $T_{\mathrm{can}}=hTh^{-1}$. Thus, we can move our problem back to the diagonal case via conjugation by $h$: the set $B=hAh^{-1}$ is a symmetric set generating $hG(K)h^{-1}$, and $W$ is invariant under conjugation.

By definition, $T_{\mathrm{can}}(K)\cap W$ collects the elements whose eigenvalues (in $\overline{K}$) are not all distinct: we claim that, for all $G$, each $x\in T_{\mathrm{can}}(K)\cap W$ falls into a subtorus $T'\subsetneq T_{\mathrm{can}}$ of degree $\leq r2^{r}$. First, we see from Definition~\ref{de:canontorus} that $\deg(T_{\mathrm{can}})\leq r2^{r}$ for $G=\mathrm{SL}_{n}$ and $\deg(T_{\mathrm{can}})\leq 2^{r}$ in all the other cases; $T'$ is then defined by one additional equation, whose degree we can easily bound. For $G=\mathrm{SL}_{n}$, calling $x_{i}$ the diagonal entries, the equation is of the form $x_{i}=x_{j}$. For $G=\mathrm{SO}_{2n}^{+}$ and $G=\mathrm{Sp}_{2n}$, calling $x_{i}$ the first $n$ diagonal entries, we have either $x_{i}=x_{j}$ or $x_{i}x_{j}=1$. Finally, $G=\mathrm{SO}_{2n+1}$ is the same as for $\mathrm{SO}_{2n}^{+}$ with the addition of $x_{i}=1$. Hence, in all cases we get $\deg(T')\leq r2^{r}$.

In each $G$, there are at most $r(r+1)$ subtori $T'$ defined as above, and all of them are contained in $T_{\mathrm{can}}$. Let $\mathcal{T}$ be the collection of such $T'$. When $r\geq 2$, for each $T'\in\mathcal{T}$ we apply Theorem~\ref{th:torusbound} and get
\begin{align*}
|A^{t}\cap T(K)\cap W| & =|B^{t}\cap T_{\mathrm{can}}(K)\cap W|=\sum_{T'\in\mathcal{T}}|B^{t}\cap T'(K)| \\
 & \leq r(r+1)C_{1}|B^{C_{2}}|^{\frac{1}{\ell+1}}=r(r+1)C_{1}|A^{C_{2}}|^{\frac{1}{\ell+1}}.
\end{align*}
Suppose finally that $r=1$. By Definition~\ref{de:ambient}, the only case is $G=\mathrm{SL}_{2}$, for which we have $W=\{\pm\mathrm{Id}_{2}\}$. Then the bound holds by direct verification.
\end{proof}


\section{Diameter bounds}\label{se:diam}

In the present section, we finally prove the Main Theorem, using the dimensional estimates contained in Theorem~\ref{th:cl(g)-bound} and Corollary~\ref{co:torusnonrs}.

From now on, the group $G=\mathrm{SL}_{n}$ is defined using the more common embedding inside $\mathrm{Mat}_{n}$, instead of the embedding inside $\mathrm{Mat}_{2n}$ that was presented in \S\ref{se:chev}. We can change definition since the two dimensional estimates referenced above still hold, and $\mathrm{diam}(G(\mathbb{F}_{q}))$ does not depend on the embedding of $G$. Some computations improve as a result of this change.

We start with a couple of facts on some specific subvarieties of untwisted classical groups.

\begin{lemma}\label{le:hth}
Let $G$ be an untwisted classical group of rank $r$ over $\mathbb{F}_{q}$, and let $T$ be a maximal torus in $G$. Assume that $\mathrm{char}(\mathbb{F}_{q})>2$. Then, the number of distinct conjugates of $T$ by elements of $G(\mathbb{F}_{q})$ is
\begin{equation*}
|\{hTh^{-1}:h\in G(\mathbb{F}_{q})\}|\geq\frac{(q-1)^{\dim(G)}}{r!2^{r}(q+1)^{r}}\geq\frac{1}{r!4^{r}}(q-1)^{\dim(G)-r}.
\end{equation*}
\end{lemma}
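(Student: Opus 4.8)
The plan is to identify $\{hTh^{-1}:h\in G(\mathbb{F}_{q})\}$ with the orbit of $T$ under the conjugation action of $G(\mathbb{F}_{q})$ on maximal tori, so that its cardinality equals $|G(\mathbb{F}_{q})|/|N_{G}(T)(\mathbb{F}_{q})|$, where the stabilizer $N_{G}(T)(\mathbb{F}_{q})=\{h\in G(\mathbb{F}_{q}):hTh^{-1}=T\}$ is the group of $\mathbb{F}_{q}$-points of the normalizer of $T$ in $G$ (here $hTh^{-1}=T$ is an equality of varieties). I would then bound $|G(\mathbb{F}_{q})|$ from below and $|N_{G}(T)(\mathbb{F}_{q})|$ from above.

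For the denominator, the exact sequence $1\to T\to N_{G}(T)\to W\to 1$ of algebraic groups, with $W$ the finite Weyl group, yields on $\mathbb{F}_{q}$-points an injection $N_{G}(T)(\mathbb{F}_{q})/T(\mathbb{F}_{q})\hookrightarrow W$, hence $|N_{G}(T)(\mathbb{F}_{q})|\leq|W|\cdot|T(\mathbb{F}_{q})|$. One has $|W|\leq r!\,2^{r}$ in all four classical families (namely $|W|=(r+1)!\leq r!\,2^{r}$ for $\mathrm{SL}_{r+1}$, since $r+1\leq 2^{r}$; $|W|=2^{r}r!$ for $\mathrm{SO}_{2r+1}$ and $\mathrm{Sp}_{2r}$; and $|W|=2^{r-1}r!$ for $\mathrm{SO}_{2r}$), and $|T(\mathbb{F}_{q})|\leq(q+1)^{r}$ because a maximal $\mathbb{F}_{q}$-torus of $G$ is $\mathbb{G}_{m}^{r}$ twisted by a finite-order element $w\in W$, so that $|T(\mathbb{F}_{q})|=\prod_{i=1}^{r}|q\zeta_{i}-1|$ with the $\zeta_{i}$ of modulus $1$. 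This gives $|N_{G}(T)(\mathbb{F}_{q})|\leq r!\,2^{r}(q+1)^{r}$.

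For the numerator, I would use the explicit order formulas recalled in Section~\ref{se:chev} together with the elementary bound $q^{a}-1\geq q^{a-1}(q-1)$ applied to each factor, which in each of the four cases gives $|G(\mathbb{F}_{q})|\geq q^{\dim(G)-r}(q-1)^{r}$. Combining the two estimates reduces the claim to
\[
q^{\dim(G)-r}(q-1)^{r}\ \geq\ (q+1)^{r}(q-1)^{\dim(G)-r},
\]
i.e.\ to $\bigl(q/(q-1)\bigr)^{\dim(G)-r}\geq\bigl((q+1)/(q-1)\bigr)^{r}$; and this holds because $\dim(G)-r\geq 2r$ for every group on our list (by Table~\ref{ta:basicg}) while $\bigl(q/(q-1)\bigr)^{2}=q^{2}/(q-1)^{2}\geq(q^{2}-1)/(q-1)^{2}=(q+1)/(q-1)$.

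I do not expect a real obstacle here: the three inputs are all routine, and the only point requiring mild care is treating a general, possibly non-split, maximal torus uniformly via $|T(\mathbb{F}_{q})|\leq(q+1)^{r}$. If $T$ is the canonical split torus of Definition~\ref{de:canontorus} then $|T(\mathbb{F}_{q})|=(q-1)^{r}$ exactly, so $|N_{G}(T)(\mathbb{F}_{q})|\leq r!\,2^{r}(q-1)^{r}$ and the weaker estimate $|G(\mathbb{F}_{q})|\geq(q-1)^{\dim(G)}$ already closes the argument.
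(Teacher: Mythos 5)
Your proof is correct and follows essentially the same route as the paper: orbit--stabilizer gives $|G(\mathbb{F}_q)|/|N(T)(\mathbb{F}_q)|$, the quotient $N(T)(\mathbb{F}_q)/T(\mathbb{F}_q)$ injects into the Weyl group of order at most $r!\,2^{r}$, and the explicit order formulas finish the estimate. The only difference is that you treat a possibly non-split maximal torus via $|T(\mathbb{F}_q)|\leq(q+1)^{r}$ paired with the sharper bound $|G(\mathbb{F}_q)|\geq q^{\dim(G)-r}(q-1)^{r}$, whereas the paper simply uses $|T(\mathbb{F}_q)|=(q-1)^{r}$ (valid as stated only for the split case) together with $|G(\mathbb{F}_q)|\geq(q-1)^{\dim(G)}$; your variant is, if anything, the more careful one.
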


\begin{proof}
We may suppose that $T$ itself is defined over $\mathbb{F}_{q}$, up to taking a conjugate by some element of $G(\overline{\mathbb{F}_{q}})$. Then, the normalizer $N(T)$ of $T$ in $G$ is also defined over $\mathbb{F}_{q}$, and so is the action of $N(T)$ on $T$. Let $\mathcal{W}(G)\simeq N(T)/T$ denote the {\em Weyl group} of $G$, as in \cite[\S 11.19]{Bor91}: by what we said above, $\mathcal{W}(G)$ itself can be defined as a variety over $\mathbb{F}_{q}$ (see \cite[\S 6.3]{Bor91} for quotients of varieties).

By \cite[\S 2.8.4]{Wil09}, we know that
\begin{align*}
\mathcal{W}(\mathrm{SL}_{r+1}) & \simeq\mathrm{Sym}(r+1), & \mathcal{W}(\mathrm{SO}_{2r}^{+}) & \simeq H_{r}\rtimes\mathrm{Sym}(r), \\
\mathcal{W}(\mathrm{SO}_{2r+1}) & \simeq\{\pm 1\}^{r}\rtimes\mathrm{Sym}(r), & \mathcal{W}(\mathrm{Sp}_{2r}) & \simeq\{\pm 1\}^{r}\rtimes\mathrm{Sym}(r),
\end{align*}
where $H_{r}$ is a subgroup of $\{\pm 1\}^{r}$ of index $2$, so in all cases $|\mathcal{W}(G)|\leq r!2^{r}$. In particular, $N(T)$ is the disjoint union of at most $r!2^{r}$ varieties of the form $n_{i}T$. Some of the $n_{i}T$ may not have $\mathbb{F}_{q}$-points, but those that do must have the same number of them: in fact, if $x_{1}\in(n_{1}T)(\mathbb{F}_{q})$ and $x_{2}\in(n_{2}T)(\mathbb{F}_{q})$, the multiplication maps by $x_{1}^{-1}x_{2}$ and by $x_{2}^{-1}x_{1}$ are morphisms defined over $\mathbb{F}_{q}$. Hence, $|N(T)(\mathbb{F}_{q})|/|T(\mathbb{F}_{q})|\leq|\mathcal{W}(G)|\leq r!2^{r}$.

On the other hand $|\{hTh^{-1}:h\in G(\mathbb{F}_{q})\}|=|G(\mathbb{F}_{q})|/|N(T)(\mathbb{F}_{q})|$: $h_{1},h_{2}$ in fact yield the same torus if and only if $h_{1}^{-1}h_{2}\in N(T)$. By \eqref{eq:numelgr} we have $|G(\mathbb{F}_{q})|\geq(q-1)^{\dim(G)}$, and by \cite[I.1.5]{Oes84} we have $|T(\mathbb{F}_{q})|\leq(q+1)^{r}$. The desired inequality follows.
\end{proof}

\begin{proposition}\label{pr:nonrs}
Let $G<\mathrm{GL}_{N}$ be an untwisted classical group of rank $r$ over $\mathbb{F}_{q}$, and let
\begin{equation*}
\mathfrak{B}=\{g\in G(\mathbb{F}_{q}):g\text{ not regular semisimple}\}.
\end{equation*}
Then, for any symmetric set $A$ of generators of $G(\mathbb{F}_{q})$ with $e\in A$, there exists some $g\in A^{k}\setminus(A^{k}\cap\mathfrak{B})$ with $k=(2r)^{3r^{2}+3r}$.
\end{proposition}

\begin{proof}
By Definition~\ref{de:ambient}, $G\subseteq\mathrm{SL}_{N}$. An element $g\in\mathrm{SL}_{N}(\mathbb{F}_{q})$ is regular semisimple if and only if the discriminant of its characteristic polynomial is nonzero. Hence, if we call this discriminant $P$, the variety $W=\{P=0\}$ is such that $\mathfrak{B}=(G\cap W)(\mathbb{F}_{q})$. There is a regular semisimple element in $G(\mathbb{F}_{q})$ by Proposition~\ref{pr:rsq}, so $G\cap W$ is proper inside $G$, and in particular $\dim(G\cap W)<\dim(G)$ since $G$ is irreducible. Thus, we only need to bound $\deg(W)$ and then use the escape results of Section~\ref{se:escape}. If we call $D$ the degree of $P$ in the entries of $g$, we have by definition $\deg(W)\leq D$, so the goal is to bound $D$.

Let $G=\mathrm{SL}_{n}$; recall that in this section we adopt the embedding of $G$ inside $\mathrm{Mat}_{n}$. By the quasi-homogeneity of the resultant (with weight $i$ for the coefficient $a_{i}$ in the polynomial $\sum_{i=0}^{n}a_{i}x^{n-i}$), we have $D\leq n(n-1)=r(r+1)$. Then, using Corollary~\ref{co:escape} on $W$, the number of steps in which we find $g$ regular semisimple is bounded by
\begin{equation*}
k\leq\left(1+\frac{1}{D-1}\right)D^{n^{2}}\leq(2r)^{2r^{2}+3r}.
\end{equation*}

We argue similarly for $G\neq\mathrm{SL}_{n}$. Since the characteristic polynomial is symmetric, the degrees of the coefficients of $x^{i}$ and of $x^{N-i}$ for $i\leq\frac{N}{2}$ are both equal to $i$: hence, from the definition of resultant it is easy to see that
\begin{equation*}
\deg(P)\leq N\cdot\left\lfloor\frac{N}{2}\right\rfloor+2\cdot\sum_{i=1}^{\lfloor N/2\rfloor-1}i+1\cdot 0=\left\lfloor\frac{N}{2}\right\rfloor\left(N+\left\lfloor\frac{N}{2}\right\rfloor-1\right)
\end{equation*}
(taking the maximum degree row by row). By Proposition~\ref{pr:shitov} we have
\begin{equation*}
k\leq 11D(N+1)^{D}\log N\leq(2r)^{3r^{2}+3r}
\end{equation*}
using Table~\ref{ta:basicg}.
\end{proof}

Let us also introduce a result that deals separately with the case of $|A|$ very large. See~\cite[Prop.~64]{PS16}, which is essentially the same.

\begin{proposition}\label{pr:np11}
Let $G$ be an untwisted classical group over $\mathbb{F}_{q}$. Assume that $q>9$ and $\mathrm{char}(\mathbb{F}_{q})>2$.

If $r$ is the rank of $G$ and $A\subseteq G(\mathbb{F}_{q})$ is such that $|A|\geq\frac{4}{3}q^{\dim(G)-\frac{r}{3}}$, then $A^{3}=G(\mathbb{F}_{q})$.
\end{proposition}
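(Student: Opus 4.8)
The strategy is the standard ``very large set'' one: exploit the quasirandomness of finite groups of Lie type in the form of Gowers' mixing theorem (Gowers on quasirandom groups; Nikolov--Pyber), which is exactly what underlies~\cite[Prop.~64]{PS16}. Recall its statement: if $\Gamma$ is a finite group whose smallest nontrivial complex irreducible representation has dimension $D$, and $X,Y,Z\subseteq\Gamma$ satisfy $|X|\,|Y|\,|Z|>|\Gamma|^{3}/D$, then $XYZ=\Gamma$. I would assemble two numerical inputs: (a) $|G(\mathbb{F}_{q})|<q^{\dim(G)}$ for every classical Chevalley group, which is immediate from the order formulas in \S\ref{se:chev} (each carries a nontrivial factor $<1$); and (b) a Landazuri--Seitz type lower bound $D\geq\tfrac{27}{64}q^{r}$ on the dimension of the smallest nontrivial irreducible representation, valid for $q>9$ --- the tight case being $\mathrm{SL}_{2}$, where $D=(q-1)/2\geq\tfrac{27}{64}q$ already holds for $q\geq 7$, which is exactly where the hypothesis on $q$ is used.

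For $G=\mathrm{SL}_{n}$ and $G=\mathrm{Sp}_{2n}$ the group $G(\mathbb{F}_{q})$ is perfect for $q>9$, so it has no nontrivial $1$-dimensional representation and its quasirandomness degree is the $D$ of (b). Applying the mixing theorem with $X=Y=Z=A$: from $|A|\geq\tfrac{4}{3}q^{\dim(G)-r/3}$ we get $|A|^{3}\geq\tfrac{64}{27}q^{3\dim(G)-r}=(q^{\dim(G)})^{3}/(\tfrac{27}{64}q^{r})>|G(\mathbb{F}_{q})|^{3}/D$, the last inequality being strict by~(a). Hence $A^{3}=G(\mathbb{F}_{q})$.

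For $G=\mathrm{SO}_{2n}$ and $G=\mathrm{SO}_{2n+1}$ there is a wrinkle: the spinor norm gives a surjection $\varepsilon\colon G(\mathbb{F}_{q})\to\mathbb{Z}/2$ with kernel $\Omega=\Omega_{N}(\mathbb{F}_{q})$ of index $2$, so $G(\mathbb{F}_{q})$ itself has quasirandomness degree $1$ and the theorem does not apply to it directly. My plan is to apply it inside $\Omega$, where $|\Omega|=|G(\mathbb{F}_{q})|/2<\tfrac12 q^{\dim(G)}$ and the smallest nontrivial representation has dimension $\geq\tfrac{27}{64}q^{r}$ (in fact $\gg q^{2r-3}$, as $\Omega$ is quasisimple and $r\geq 3$). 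Writing $A=A_{0}\sqcup A_{1}$ with $A_{i}=A\cap\varepsilon^{-1}(i)$, and letting $A_{s}$ be the larger part (so $|A_{s}|\geq|A|/2\geq\tfrac{2}{3}q^{\dim(G)-r/3}$), one checks $|A_{s}|^{3}\geq\tfrac{8}{27}q^{3\dim(G)-r}=(\tfrac12 q^{\dim(G)})^{3}/(\tfrac{27}{64}q^{r})>|\Omega|^{3}/D$; a $3$-fold product of elements of $A_{s}$ lies in one coset of $\Omega$, and translating it back to $\Omega$ the mixing theorem shows it equals that whole coset. Running this in each coset --- using that $A$ generates $G(\mathbb{F}_{q})$, so $A_{0},A_{1}\neq\emptyset$ and the complementary coset is reachable by a $3$-fold product --- yields $A^{3}\supseteq\Omega$ and $A^{3}\supseteq G(\mathbb{F}_{q})\setminus\Omega$, hence $A^{3}=G(\mathbb{F}_{q})$.

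The step I expect to be the main obstacle is precisely this orthogonal case. Passing to $\Omega$ forces one to reassemble $G(\mathbb{F}_{q})$ from products of three factors spread over the two cosets of $\Omega$, and one must be careful when $A$ is very unevenly split between them, so that the smaller of $A_{0},A_{1}$ is too small to feed into the mixing theorem as a factor in its own right; one then routes everything through the larger part $A_{s}$, and it is here that the generation hypothesis (and, in the way the proposition is applied, the fact that $A$ is a large power of the original generating set, so that both cosets are comfortably populated) is essential. Everything else is a routine substitution of the quantities from Table~\ref{ta:basicg} and \S\ref{se:chev}.
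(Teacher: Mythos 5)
For $\mathrm{SL}_{n}$ and $\mathrm{Sp}_{2n}$ your argument is essentially the paper's: the paper also invokes Nikolov--Pyber~\cite{NP11} together with the Tiep--Zalesskii/Landazuri--Seitz lower bound $\delta\geq\frac{q^{r}-1}{2}$ on the minimal nontrivial degree and closes with the order formulas of \S\ref{se:chev}; your constants $\frac{27}{64}q^{r}$ and $|G(\mathbb{F}_{q})|<q^{\dim(G)}$ are a cosmetic variant of the same computation. Where you diverge is the orthogonal case, and your instinct there is correct: the paper applies the minimal-degree bound to $\mathrm{SO}_{N}(\mathbb{F}_{q})$ itself, but for odd $q$ the spinor norm is a surjection $\mathrm{SO}_{N}(\mathbb{F}_{q})\to\mathbb{Z}/2$ with kernel $\Omega_{N}(\mathbb{F}_{q})$ of index $2$, so $\mathrm{SO}_{N}(\mathbb{F}_{q})$ is not perfect, has a nontrivial linear character, and its quasirandomness degree is $1$; Nikolov--Pyber then yields nothing, and indeed $A=\Omega_{N}(\mathbb{F}_{q})$ satisfies the size hypothesis (its density in $G(\mathbb{F}_{q})$ is about $\frac12$, versus the required $\approx\frac{4}{3}q^{-r/3}$) while $A^{3}=\Omega_{N}(\mathbb{F}_{q})\neq G(\mathbb{F}_{q})$. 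So the proposition as stated needs an extra hypothesis in the orthogonal case --- generation, or at least that $A$ meets both spinor-norm cosets substantially --- and your decision to add one is not optional.

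Your repair, however, is incomplete exactly at the step you flagged. The mixing theorem does give that $A_{s}^{3}$ is a full coset of $\Omega_{N}(\mathbb{F}_{q})$ (after the translation $A_{s}^{3}a^{-1}=(A_{s}a^{-1})(aA_{s})(A_{s}a^{-1})$ with $a\in A_{s}$, which puts all three factors inside the kernel). But the complementary coset is not ``reachable by a $3$-fold product'' in any controlled way when $A$ is lopsided: a triple product using exactly one factor $b$ from the minority part is a translate of $A_{s}^{2}$, and the quasirandomness inequality only forces \emph{triple} products to cover a group --- you would need $|A_{s}|^{2}>|\Omega_{N}(\mathbb{F}_{q})|^{3}/D$, which fails by a factor of order $q^{\dim(G)-r/3}$. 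Generation guarantees only $A_{1-s}\neq\emptyset$, not that it is large, and even taking $A$ to be a power of a symmetric generating set does not obviously force $\min(|A_{0}|,|A_{1}|)\gg|\Omega_{N}(\mathbb{F}_{q})|/D^{1/3}$. As written, your argument proves $A^{3}\supseteq\varepsilon^{-1}(s)$ and hence $A^{4}=G(\mathbb{F}_{q})$, or $A^{3}=G(\mathbb{F}_{q})$ under the stronger hypothesis that both $|A_{0}|,|A_{1}|>|\Omega_{N}(\mathbb{F}_{q})|/D^{1/3}$; it does not prove the statement as given, and some such weakening (or strengthening of the hypotheses) appears to be genuinely necessary, since the statement itself is false for $\mathrm{SO}_{N}$ without one.
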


\begin{proof}
For any finite group $H$ with minimal degree of a complex representation equal to $\delta$, if $|A|>\delta^{-\frac{1}{3}}|H|$ then $A^{3}=H$ (see~\cite[Cor.~1]{NP11}). In our case, the minimal degree is at least $\frac{q^{r}-1}{2}$: see the entries of~\cite[Table II]{TZ96}, which refer to the corresponding simple groups, and note that the same values hold for the groups in our statement (i.e.\ up to quotient by the centre $Z(G)$, see~\cite[p.~419]{LS74}).

By Section~\ref{se:chev}, we have $|G(\mathbb{F}_{q})|\leq q^{\dim(G)-\frac{r}{3}}(q^{r}-1)^{\frac{1}{3}}$ for $G\neq\mathrm{SL}_{2}$, whereas $|\mathrm{SL}_{2}(\mathbb{F}_{q})|\leq 1.04q^{\frac{8}{3}}(q-1)^{\frac{1}{3}}$ for $q\geq 5$; thus
\begin{equation*}
\left(\frac{2}{q^{r}-1}\right)^{\frac{1}{3}}|G(\mathbb{F}_{q})|\leq\frac{4}{3}q^{\dim(G)-\frac{r}{3}}
\end{equation*}
in all cases, and we obtain the result.
\end{proof}

In order to provide an estimate for $C$ in Conjecture~\ref{conj:Babai}, we first establish a growth result that is as explicit as possible, assuming that we already know some specific dimensional estimates. To sum up, by Theorem~\ref{th:cl(g)-bound} and Corollary~\ref{co:torusnonrs}, we know that for any $l\geq 1$, any regular semisimple element $g$ of $G$, and any maximal torus $T$ of $G$, we have
\begin{align}
|A^{l}\cap\Cl(g)(\mathbb{F}_{q})| & \leq C_{1}|A^{C_{2}(l)}|^{1-\frac{1}{\ell}}, & C_{1} & =(2r)^{18r^{2}}, & C_{2}(l) & =(2r)^{17r^{2}}+2l, \label{eq:de-clg} \\
|A^{l}\cap T(\mathbb{F}_{q})\cap\mathfrak{B}| & \leq C'_{1}|A^{C'_{2}(l)}|^{\frac{1}{\ell+1}}, & C'_{1} & =(2r)^{10r}, & C'_{2}(l) & =(2r)^{14r^{2}}+4rl, \label{eq:de-tnonrs}
\end{align}
where $\mathfrak{B}$ is the set of elements of $G(\mathbb{F}_{q})$ that are not regular semisimple. Note that $1-\frac{1}{\ell}=\frac{\dim(\Cl(g))}{\dim(G)}$, and that $\frac{1}{\ell+1}\geq\frac{\dim(T')}{\dim(G)}$ for any non-maximal torus $T'$.

\begin{theorem}\label{thm:growth}
Let $G<\mathrm{GL}_{N}$ be an untwisted classical group of rank $r$ over $K=\mathbb{F}_{q}$. Assume that $q\geq e^{8r\log(2r)}$ and that $\mathrm{char}(K)>N$. Let $A$ be a symmetric set of generators of $G(K)$.

Then, for any $l\geq 1$, either $A^{3l}=G(K)$ or $|A^{m(l)}|\geq\frac{1}{c}|A^{l}|^{1+\varepsilon}$ for at least one of the following triples:
\begin{align}
(c,m(l),\varepsilon) & =\left((2r)^{32r^{2}},(2r)^{14r^{2}}+8rl,\frac{1}{12r}\right), \label{eq:growth-a} \\
(c,m(l),\varepsilon) & =\left((2r)^{31r^{2}},(2r)^{18r^{2}}+4l,\frac{1}{15r^{2}}\right), \label{eq:growth-b} \\
(c,m(l),\varepsilon) & =\left((2r)^{14r^{2}},(2r)^{18r^{2}}+4l,\frac{1}{20r}\right). \label{eq:growth-c}
\end{align}
\end{theorem}

\begin{proof}
If $|A^{l}|\geq\frac{4}{3}q^{\dim(G)-\frac{r}{3}}$, then by Proposition~\ref{pr:np11} we have $A^{3l}=G(K)$. From now on, we assume the contrary.

Define a maximal torus $T$ of $G$ to be \textit{involved} if $A^{k'}\cap T(K)$ contains a regular semisimple element, where $k'=\max\{k,2l\}$ and $k$ is as in Proposition~\ref{pr:nonrs}; this term goes back to~\cite[Def.~5.2]{BGT11}. By definition of $k$, there is a regular semisimple $g\in A^{k}$, which then lies in a (unique) maximal torus $T$, i.e.\ $g\in A^{k}\cap T(K)$: in other words, there exists an involved torus $T$. The proof will be articulated into two cases:
\begin{enumerate}[{Case} 1:]
\item\label{it:invnoinv} There are a maximal torus $T$ and an element $h\in A$ such that $T$ is involved and $hTh^{-1}$ is not involved.
\item\label{it:allinv} For any $h\in G$, all maximal tori $hTh^{-1}$ are involved where $T$ is our initial involved torus.
\end{enumerate}

The principles behind the proof that we are going to present appear in many versions across the literature: see for instance the pivoting argument in~\cite[\S 5]{Hel19b} and~\cite[\S 5]{BGT11}, or the use of full CCC-subgroups (read: involved maximal tori) in~\cite[\S 8]{PS16}. We follow in particular the proof of~\cite[Thm.~2]{RS18}, which we generalize to untwisted classical groups.

\subsubsection*{Case~\ref{it:invnoinv}:} By hypothesis there is a regular semisimple $g\in A^{k'}\cap T(K)$ for $k'\geq 2$ as in the definition of involved. This implies that $g'=hgh^{-1}\in A^{k'+2}\cap T'(K)$, which defines a map
\begin{align*}
\psi & :A^{l}\rightarrow\Cl(g')(K), & a & \mapsto ag'a^{-1}.
\end{align*}
If $a_{1},a_{2}\in A^{l}$ have the same image under $\psi$, then $a_{1}^{-1}a_{2}\in C(g')=T'$, so $a_{1}^{-1}a_{2}$ is an element of $A^{2l}\cap T'(K)$. By hypothesis $T'$ is not involved, so in particular $a_{1}^{-1}a_{2}\in\mathfrak{B}$, where $\mathfrak{B}$ as in Proposition~\ref{pr:nonrs} denotes the set of elements of $G(K)$ that are not regular semisimple: therefore, if $x$ is any element in the image of $\psi$ and $\psi^{-1}(x)=\{a_{i}\}_{i}$, we can fix $a_{1}$ and note that the $a_{1}^{-1}a_{i}$ are all distinct and in $A^{2l}\cap T'(K)\cap\mathfrak{B}$. Combining this with~\eqref{eq:de-clg}--\eqref{eq:de-tnonrs}, we get
\begin{equation*}
\frac{|A^{l}|}{C'_{1}|A^{C'_{2}(2l)}|^{\frac{1}{\ell+1}}}\leq\frac{|A^{l}|}{|A^{2l}\cap T'(K)\cap\mathfrak{B}|}\leq|A^{k'+2l+2}\cap\Cl(g')(K)|\leq C_{1}|A^{C_{2}(k'+2l+2)}|^{1-\frac{1}{\ell}},
\end{equation*}
or in other words
\begin{equation}\label{eq:growthdouble}
|A^{C_{2}(k'+2l+2)}|^{1-\frac{1}{\ell}}|A^{C'_{2}(2l)}|^{\frac{1}{\ell+1}}\geq\frac{|A^{l}|}{C_{1}C'_{1}}.
\end{equation}
Then we refine~\eqref{eq:growthdouble}, by giving upper bounds for $x^{1-\frac{1}{\ell}}y^{\frac{1}{\ell+1}}$ involving only one variable.
\begin{enumerate}[(1)]
\item\label{it:xyy} If $x\leq y^{1-\frac{1}{6\ell}}$, then we have $\left(1-\frac{1}{\ell}\right)\left(1-\frac{1}{6\ell}\right)+\frac{1}{\ell+1}=1-\frac{\ell^{2}+6\ell-1}{6\ell^{2}(\ell+1)}$, so $x^{1-\frac{1}{\ell}}y^{\frac{1}{\ell+1}}\leq y^{1-\frac{\ell^{2}+6\ell-1}{6\ell^{2}(\ell+1)}}$.
\item\label{it:xyx} If $x>y^{1-\frac{1}{6\ell}}$, then we have $1-\frac{1}{\ell}+\frac{1}{\ell+1}\left(1-\frac{1}{6\ell}\right)^{-1}=1-\frac{5\ell-1}{\ell(\ell+1)(6\ell-1)}$, so $x^{1-\frac{1}{\ell}}y^{\frac{1}{\ell+1}}<x^{1-\frac{5\ell-1}{\ell(\ell+1)(6\ell-1)}}$.
\end{enumerate}
Using the values in Table~\ref{ta:basicg}, we also have
\begin{align}
\left(1-\frac{\ell^{2}+6\ell-1}{6\ell^{2}(\ell+1)}\right)^{-1} & >1+\frac{\ell^{2}+6\ell-1}{6\ell^{2}(\ell+1)}\geq 1+\frac{1}{12r}, \label{eq:simpler1} \\
\left(1-\frac{5\ell-1}{\ell(\ell+1)(6\ell-1)}\right)^{-1} & >1+\frac{5\ell-1}{\ell(\ell+1)(6\ell-1)}\geq 1+\frac{1}{15r^{2}}, \label{eq:simpler2}
\end{align}
where the worst cases are respectively $G\neq\mathrm{SL}_{r+1}$ for $r\rightarrow\infty$, and $G=\mathrm{SL}_{2}$. Thus,~\eqref{eq:growthdouble} implies one of the two possibilities below: if \eqref{it:xyy} applies, then by \eqref{eq:de-clg}--\eqref{eq:de-tnonrs}--\eqref{eq:simpler1}
\begin{equation}\label{eq:growth11}
|A^{C'_{2}(2l)}|\geq\left(\frac{|A^{l}|}{C_{1}C'_{1}}\right)^{1+\frac{1}{12r}}\geq\frac{|A^{l}|^{1+\frac{1}{12r}}}{(2r)^{31r^{2}}},
\end{equation}
whereas if \eqref{it:xyx} applies, then by \eqref{eq:de-clg}--\eqref{eq:de-tnonrs}--\eqref{eq:simpler2}
\begin{equation}\label{eq:growth12}
|A^{C_{2}(k'+2l+2)}|\geq\left(\frac{|A^{l}|}{C_{1}C'_{1}}\right)^{1+\frac{1}{15r^{2}}}\geq\frac{|A^{l}|^{1+\frac{1}{15r^{2}}}}{(2r)^{30r^{2}}}.
\end{equation}

\subsubsection*{Case~\ref{it:allinv}:} We are in the case when for all maximal tori $T'$ of the form $hTh^{-1}$ there exists a regular semisimple element $g'\in A^{k'}\cap T'(K)$, where $T$ is our initial maximal torus and $k'=\max\{k,2l\}$ with $k$ as in Proposition~\ref{pr:nonrs}. We fix one $g'$ arbitrarily for each $T'$; for any one of these $T'$, consider the map
\begin{align*}
\psi_{T'} & :A^{l}\rightarrow\Cl(g')(K), & \psi_{T'}(a) & =ag'a^{-1}.
\end{align*}
Observe that $\psi_{T'}(A^{l})\subseteq A^{k'+2l}\cap\Cl(g')(K)$, and that in fact $\Cl(g')=\Cl(g)$ for all the $g'$ we are considering, as the corresponding tori are all conjugate. By definition, if $x$ is the element of $\Cl(g)$ with fibre $\psi_{T'}^{-1}(x)$ of maximal size, we must have
\begin{equation}\label{eq:fibre}
|A^{k'+2l}\cap\Cl(g)(K)|\geq|\psi_{T'}(A^{l})|\geq\frac{|A^{l}|}{|\psi_{T'}^{-1}(x)|}.
\end{equation}
As in Case~\ref{it:invnoinv}, for any two $a_{1},a_{2}\in A^{l}$ lying in the fibre $\psi_{T'}^{-1}(x)$ we have $a_{1}^{-1}a_{2}\in A^{2l}\cap T'(K)$, which implies that $|\psi_{T'}^{-1}(x)|\leq|A^{2l}\cap T'(K)|$. Combining this with~\eqref{eq:fibre}, we get
\begin{equation}\label{eq:fibre2}
|A^{2l}\cap T'(K)|\geq\frac{|A^{l}|}{|A^{k'+2l}\cap\Cl(g)(K)|}
\end{equation}
for each of the tori $T'$. Now, every regular semisimple element of $G$ sits inside a unique maximal torus, so that $G(K)\setminus\mathfrak{B}$ (where $\mathfrak{B}$ is as in Proposition~\ref{pr:nonrs}) is partitioned between the maximal tori of $G$. Moreover, we have an upper bound on the size of the set $\mathcal{T}$ of conjugates of $T$ by Lemma~\ref{le:hth}. Combining these facts with \eqref{eq:de-clg}--\eqref{eq:de-tnonrs}--\eqref{eq:fibre2},
\begin{align}
|A^{2l}| & \geq|A^{2l}\setminus(A^{2l}\cap\mathfrak{B})|\geq\sum_{T'\in\mathcal{T}}(|A^{2l}\cap T'(\mathbb{F}_{q})|-|A^{2l}\cap T'(\mathbb{F}_{q})\cap\mathfrak{B}|) \nonumber \\
 & \geq\frac{(q-1)^{\dim(G)-r}}{r!2^{r}}\left(\frac{|A^{l}|}{C_{1}|A^{C_{2}(k'+2l)}|^{1-\frac{1}{\ell}}}-C'_{1}|A^{C'_{2}(2l)}|^{\frac{1}{\ell+1}}\right). \label{eq:oofdiff}
\end{align}

Suppose that the second term in \eqref{eq:oofdiff} is large, i.e.,
\begin{equation}\label{eq:oofhyp}
C'_{1}|A^{C'_{2}(2l)}|^{\frac{1}{\ell+1}}\geq\frac{|A^{l}|}{2C_{1}|A^{C_{2}(k'+2l)}|^{1-\frac{1}{\ell}}}.
\end{equation}
We are in the same situation as with \eqref{eq:growthdouble}, so we can perform the same refinement. If \eqref{it:xyy} applies, then by \eqref{eq:de-clg}--\eqref{eq:de-tnonrs}--\eqref{eq:simpler1}
\begin{equation}\label{eq:growth211}
|A^{C'_{2}(2l)}|\geq\left(\frac{|A^{l}|}{2C_{1}C'_{1}}\right)^{1+\frac{1}{12r}}\geq\frac{|A^{l}|^{1+\frac{1}{12r}}}{(2r)^{32r^{2}}},
\end{equation}
whereas if \eqref{it:xyx} applies, then by \eqref{eq:de-clg}--\eqref{eq:de-tnonrs}--\eqref{eq:simpler2}
\begin{equation}\label{eq:growth212}
|A^{C_{2}(k'+2l)}|\geq\left(\frac{|A^{l}|}{2C_{1}C'_{1}}\right)^{1+\frac{1}{15r^{2}}}\geq\frac{|A^{l}|^{1+\frac{1}{15r^{2}}}}{(2r)^{31r^{2}}}.
\end{equation}
Suppose instead that \eqref{eq:oofhyp} does not hold. Then, by \eqref{eq:de-clg}--\eqref{eq:oofdiff},
\begin{align}\label{eq:growthq}
C_{1}|A^{2l}||A^{C_{2}(k'+2l)}|^{1-\frac{1}{\ell}} & \geq\frac{(q-1)^{\dim(G)-r}}{r!2^{2r+1}}|A^{l}|\geq\left(1-\frac{1}{e^{8r\log(2r)}}\right)^{2r^{2}}\frac{q^{\left(1-\frac{1}{\ell}\right)\dim(G)}}{r!2^{2r+1}}|A^{l}| \nonumber \\
 & \geq\frac{q^{\left(1-\frac{1}{\ell}\right)\dim(G)}}{r!2^{2r+2}}|A^{l}|.
\end{align}
Call $R=\frac{|A^{C_{2}(k'+2l)}|}{|A^{l}|}\geq\frac{|A^{2l}|}{|A^{l}|}$. Substituting inside~\eqref{eq:growthq}, we obtain
\begin{align*}
C_{1}R|A^{l}|R^{1-\frac{1}{\ell}}|A^{l}|^{1-\frac{1}{\ell}} & \geq\frac{q^{\left(1-\frac{1}{\ell}\right)\dim(G)}}{r!2^{2r+2}}|A^{l}| & & \Longrightarrow & R^{\frac{2\ell-1}{\ell-1}} & \geq\frac{1}{(r!2^{2r+2}C_{1})^{\frac{\ell}{\ell-1}}}\cdot\frac{q^{\dim(G)}}{|A^{l}|}.
\end{align*}
Since we have assumed from the start that $|A^{l}|<\frac{4}{3}q^{\dim(G)-\frac{r}{3}}$, we have
\begin{align*}
q^{\dim(G)} & \geq\left(\frac{3|A^{l}|}{4}\right)^{\frac{1}{1-\frac{1}{3\ell}}}\geq\frac{2}{3}|A^{l}|^{1+\frac{1}{8r}} & & \Longrightarrow & R^{\frac{2\ell-1}{\ell-1}} & \geq\frac{\frac{2}{3}|A^{l}|^{\frac{1}{8r}}}{(r!2^{2r+2}C_{1})^{\frac{\ell}{\ell-1}}},
\end{align*}
implying by \eqref{eq:de-clg}
\begin{equation}\label{eq:growth22}
|A^{C_{2}(k'+2l)}|\geq\frac{|A^{l}|^{1+\frac{\ell-1}{8r(2\ell-1)}}}{(r!2^{2r+3}C_{1})^{\frac{\ell}{2\ell-1}}}\geq\frac{|A^{l}|^{1+\frac{1}{20r}}}{(r!2^{2r+3}C_{1})^{\frac{\ell}{2\ell-1}}}\geq\frac{|A^{l}|^{1+\frac{1}{20r}}}{(2r)^{14r^{2}}}.
\end{equation}
\medskip

Finally we put all the estimates together, and use the value of $k$ from Proposition~\ref{pr:nonrs} and the values of $C_{2},C'_{2}$ from \eqref{eq:de-clg}--\eqref{eq:de-tnonrs}: \eqref{eq:growth11} and \eqref{eq:growth211} yield \eqref{eq:growth-a}, \eqref{eq:growth12} and \eqref{eq:growth212} yield \eqref{eq:growth-b}, and \eqref{eq:growth22} yields~\eqref{eq:growth-c}.
\end{proof}

\subsection{Proof of Main Theorem}

From the conclusion of Theorem~\ref{thm:growth}, we move to the proof of our main result. Let $K=\mathbb{F}_{q}$. For $q<e^{8r\log(2r)}$, from~\cite{HMPQ19} it follows directly that
\begin{equation*}
\mathrm{diam}(G(K))=e^{O(r^{2}(\log r)^{3})},
\end{equation*}
yielding the bound. Assume $q\geq e^{8r\log(2r)}$ instead.

For any $A$ symmetric in $G(K)$ and any $l\geq 1$, by Theorem~\ref{thm:growth} we have either $A^{3l}=G(K)$ or $|A^{m(l)}|\geq\frac{1}{c}|A^{l}|^{1+\varepsilon}$ for $(c,m(l),\varepsilon)$ equal to at least one of the triples \eqref{eq:growth-a}--\eqref{eq:growth-b}--\eqref{eq:growth-c}. Write $m(l)=m_{1}l+m_{0}$.

Assume first that $|A|\geq c^{2/\varepsilon}$. Thus, either $A^{3l}=G(K)$ or $|A^{m_{1}l+m_{0}}|\geq|A^{l}|^{1+\varepsilon/2}$ for every $l\geq 1$. Define $l_{0}=1$ and
\begin{equation*}
l_{i}=m_{1}^{i}+(m_{1}^{i-1}+\ldots+m_{1}+1)m_{0}
\end{equation*}
for all $i\geq 1$. If $j$ is the smallest non-negative integer such that $A^{3l_{j}}=G(K)$, then $|A|^{(1+\varepsilon/2)^{j-1}}\leq|A^{l_{j-1}}|<|G(K)|$. Therefore
\begin{equation*}
j<1+\frac{1}{\log(1+\frac{\varepsilon}{2})}\log\left(\frac{\log|G(K)|}{\log|A|}\right),
\end{equation*}
which implies
\begin{align*}
\mathrm{diam}(G(K),A) & \leq 3l_{j}\leq 3m_{1}^{j}(1+m_{0})<3m_{1}(1+m_{0})\left(\frac{\log|G(K)|}{\log|A|}\right)^{\frac{\log m_{1}}{\log(1+\varepsilon/2)}}.
\end{align*}
For any choice of $c,m_{0},m_{1},\varepsilon$ we have
\begin{align*}
3m_{1}(1+m_{0}) & \leq(2r)^{22r^{2}}, & \frac{\log m_{1}}{\log(1+\frac{\varepsilon}{2})} & \leq\max\left\{\frac{r\log(8r)}{\log\left(\frac{25}{24}\right)},\frac{r^{2}\log 4}{\log\left(\frac{31}{30}\right)},\frac{r\log 4}{\log\left(\frac{41}{40}\right)}\right\}\leq 57r^{2},
\end{align*}
thus giving
\begin{equation}\label{eq:diamm}
\mathrm{diam}(G(K),A)\leq(2r)^{22r^{2}}\left(\frac{\log|G(K)|}{\log|A|}\right)^{57r^{2}}\leq(\log|G(K)|)^{57r^{2}}
\end{equation}
because $|A|$ is large enough.

Now assume that $|A|<c^{2/\varepsilon}$. Then, trivially, $|A^{\lfloor c^{2/\varepsilon}\rfloor}|\geq c^{2/\varepsilon}$. Applying \eqref{eq:diamm} to $A^{\lfloor c^{2/\varepsilon}\rfloor}$ we obtain
\begin{equation*}
\mathrm{diam}(G(K),A)\leq c^{2/\varepsilon}\mathrm{diam}(G(K),A^{\lfloor c^{2/\varepsilon}\rfloor})\leq(2r)^{930r^{4}}(\log|G(K)|)^{57r^{2}}
\end{equation*}
yielding the result.

As $q\geq e^{8r\log(2r)}$ by assumption, using \eqref{eq:numelgr} we have also the two bounds
\begin{align*}
\log|G(K)| & \geq r^{2}\log q\geq(2r)^{3}\log 2, \\
\log|G(K)| & \geq\log|\mathrm{SL}_{2}(\mathbb{F}_{q})|\geq 16.63\geq(\log 2)^{-7.67}.
\end{align*}
Hence, we can simplify the result as
\begin{align*}
\mathrm{diam}(G(K),A) & \leq\left(\frac{\log|G(K)|}{\log 2}\right)^{\frac{930}{3}r^{4}}(\log|G(K)|)^{57r^{2}} \\
 & \leq(\log|G(K)|)^{\frac{930}{3}\left(1+\frac{1}{7.67}\right)r^{4}+57r^{2}}\leq(\log|G(K)|)^{408r^{4}}.
\end{align*}
The proof of the Main Theorem is concluded.

\subsection{Asymptotics for $r\rightarrow\infty$}\label{se:asymp}

We end the section by giving a quick overview of how to retrieve an explicit asymptotic estimate for $\mathrm{diam}(G(\mathbb{F}_{q}))$ when the rank of $G$ goes to infinity. Since there is no qualitative difference in the proofs, we describe how the intermediate results change without reproving them.

Let $r\rightarrow\infty$; hereafter, $o(1)$ denotes a function $f$ with $\lim_{r\rightarrow\infty}f(r)=0$. The conditions on $q$ in Corollary~\ref{co:langweil}\eqref{co:langweilariadne}-\eqref{co:langweilstick} become $q\geq e^{(2+o(1))r\log r}$. For $g\in G$ regular semisimple, the conjugacy class $\Cl(g)$ is a variety of degree $2^{(2+o(1))r^{2}}$ (Proposition~\ref{pr:clvar}). Consequently, for the map $f$ defined in \eqref{eq:f}, the number of points in a generic fibre as in \eqref{eq:fibresize} can be bounded by $r^{(1+o(1))r\ell}$. Since $m,k$ in \eqref{eq:clmk}-\eqref{eq:clmk2} are at most $r^{(12+o(1))r^{2}}$, we obtain in \eqref{eq:combinedbound} that Theorem~\ref{th:cl(g)-bound} holds with
\begin{align*}
C_{1} & =r^{(12+o(1))r^{2}}, & C_{2} & =r^{(12+o(1))r^{2}}+2t.
\end{align*}
Now, using the definitions and degree bounds contained in Proposition~\ref{pr:ybound}, the values of $m,k$ in the proof of Theorem~\ref{th:torusbound} are at most
\begin{align*}
m & =r^{(6+o(1))r(\ell+1)}, & k & =r^{(4+o(1))r}.
\end{align*}
Thus, putting together \eqref{eq:torusaway} and \eqref{eq:torusimfi}, Theorem~\ref{th:torusbound} holds with
\begin{align*}
C_{1} & =r^{(10+o(1))r}, & C_{2} & =r^{(12+o(1))r^{2}}+(\ell+1)t.
\end{align*}

We proceed to estimating growth, where the values computed above replace the two pairs $C_{1},C_{2}$ and $C'_{1},C'_{2}$ inside \eqref{eq:de-clg} and \eqref{eq:de-tnonrs}. During the proof of Theorem~\ref{thm:growth}, unless $A^{3l}=G(K)$ already, we reach as before one among \eqref{eq:growthdouble}--\eqref{eq:oofhyp}--\eqref{eq:growthq}. The terms $\frac{1}{12r},\frac{1}{15r^{2}}$ of \eqref{eq:simpler1}--\eqref{eq:simpler2} are replaced by $\frac{1}{12r},\frac{5-o(1)}{24r^{2}}$, and the exponent $\frac{1}{20r}$ of \eqref{eq:growth22} is replaced by $\frac{1-o(1)}{12r}$.

Summing up the above, the final pairs $(c,m(l),\varepsilon)$ of Theorem~\ref{thm:growth} become
\begin{align*}
& \left(r^{(12+o(1))r^{2}},r^{(12+o(1))r^{2}}+2(\ell+1)l,\frac{1}{12r}\right), \\
& \left(r^{(12+o(1))r^{2}},r^{(12+o(1))r^{2}}+4l,\frac{5-o(1)}{24r^{2}}\right), \\
& \left(r^{(6+o(1))r^{2}},r^{(12+o(1))r^{2}}+4l,\frac{1-o(1)}{12r}\right).
\end{align*}
Hence, the proof of the Main Theorem works with the exponent $57r^{2}$ in \eqref{eq:diamm} replaced by $\left(\frac{48\log 4}{5}+o(1)\right)r^{2}$ and the quantity $c^{2/\varepsilon}$ bounded by $r^{(576/5+o(1))r^{4}}$. As for the simplified result, the condition on $q$ implies $\log|G(K)|\geq r^{3}$, and furthermore we can use $c^{(1+o(1))/\varepsilon}$ as a threshold for $|A|$; therefore, the final exponent becomes $\left(\frac{96}{5}+o(1)\right)r^{4}$.


\section*{Acknowledgements}
The authors would like to thank the Georg-August-Universit\"at G\"ottingen, Technische Universit\"at Dresden, Max Planck Institute f\"ur Mathematics, Bonn, and the Hebrew University of Jerusalem where much of the work on this article was accomplished, for their hospitality. The authors also thank V.~Finkelshtein, L.~Guan and the participants of the Mariaspring retreat for their suggestions and insights, and M.~Kassabov for his careful reading and valuable remarks.

JB is supported by ERC Consolidator grants 648329 (codename GRANT, with H. Helfgott as PI) and 681207 (codename GrDyAP, with A. Thom as PI). DD is supported by ERC Consolidator grant 648329, the Israel Science Foundation Grants No. 686/17 and 700/21 of A.~Shalev, and the Emily Erskine Endowment Fund; he has been a postdoc at the Hebrew University of Jerusalem under A.~Shalev in 2020/21 and 2021/22. HH is supported by ERC Consolidator grant 648329 and by his Humboldt professorship.

In addition, JB and HH would like to thank the organizers of the HIM, Bonn trimester program on ``Harmonic Analysis and Analytic Number Theory" during which an important part of the article was finalized. 
\nocite{}
\bibliographystyle{abbrv}
\bibliography{BDH}
\end{document}